\documentclass[letterpaper,12pt]{amsart}
\usepackage{amssymb}
\usepackage{amsfonts}
\usepackage{amsmath}

\setcounter{MaxMatrixCols}{10}

\newtheorem{theorem}{Theorem}
\theoremstyle{plain}

\newtheorem{corollary}{Corollary}

\newtheorem{definition}{Definition}

\newtheorem{lemma}{Lemma}

\newtheorem{remark}{Remark}

\numberwithin{equation}{section}
\input{tcilatex}

\begin{document}
\title{Enriched pro-categories and shapes}
\author{Nikica Ugle\v{s}i\'{c}}
\curraddr{23287 Veli R\aa t, Dugi Otok, Croatia}
\email{uglesic@pmfst.hr}
\date{December 4, 2017}
\subjclass[2000]{Primary 55P55, Secondary 18A32. }
\keywords{partially ordered set, category, functor, pro-category,
pro-reflective subcategory, (abstract) shape, (abstract) coarse shape. }

\begin{abstract}
Given a category $\mathcal{C}$ and a directed partially ordered set $J$, a
certain category $pro^{J}$-$\mathcal{C}$ on inverse systems in $\mathcal{C}$
is constructed such that the ordinary pro-category $pro$-$\mathcal{C}$ is
the most special case of a singleton $J$ $\equiv $\{1\}$.$ Further, the
known pro$^{\ast }$-category $pro^{\ast }$-$\mathcal{C}$ becomes $pro^{%
\mathbb{N}}$-$\mathcal{C}$. Moreover, given a pro-reflective category pair $(%
\mathcal{C},\mathcal{D})$, the $J$-shape category $Sh_{(\mathcal{C},\mathcal{%
D})}^{J}$ and the corresponding $J$-shape functor $S^{J}$ are constructed
which, in mentioned special cases, become the well known ones. Among several
important properties, the continuity theorem for a $J$-shape category is
established. It implies the \textquotedblleft $J$-shape
theory\textquotedblright\ is a genuine one such that the shape and the
coarse shape theory are its very special examples.
\end{abstract}

\maketitle

\section{Introduction}

The shape theory, from the very begining, has been an operable extension and
generalization of the homotopy theory to the class of all (locally bad)
topological spaces. Since Borsuk's paper [1] and book [2], many articles
([6], [7], [16], [20], [22], [26], [27] are some of the most fundamental)
and several books ([3], [8], [10], [24]) concerning shape theory were
written almost in the first decade already. By attempting to describe the
shape theory (standard and abstract) as an axiomatic homotopy theory
(founded by D. G. Quillen, [28]), the strong shape theory has been obtained
([11], [5], [12]). At the same time some shape theorists introduced and
considered several classifications of metrizable compacta coarser than the
shape type. The most interesting of them are the Borsuk's quasi-equivalence
[4] and Marde\v{s}i\'{c} $S$-equivalence [20]. They were further studied by
the author and some others ([9], [13], [15], [17], [35] and, as a survey,
[30]). On that line, the most important has become a certain uniformization
of the $S$-equivalence, called the $S^{\ast }$-equivalence, which admits a
categorical characterization, [25]. Moreover, it admits (genuine and
different; [31], [33]) generalizations to all topological spaces as well as
to any abstract categorical framework ([19], [34], [36]]), and all the well
known shape invariants remain as the invariants of the both generalizations
(in addition, [18] and [29]).

In this paper we generalize the generalization introduced in [19], the
coarse shape theory, so that it and the shape theory as well become the very
special cases of the new, so called, $J$-shape theory.

A part of the idea came from the recently founded quotient shape theory for
a concrete category, [32]. Namely, figuratively speaking, the quotient
shapes of an object are \textquotedblleft its (changeable)
pictures\textquotedblright\ depending on the distance of the
\textquotedblleft view point\textquotedblright\ which is determined by a
\textquotedblleft reciprocal\textquotedblright\ infinite cardinality (larger
cardinal - closer distance, i.e., finer picture, and comparing them to the
objects of lower cardinalities). This role hereby overtakes a directed
partially ordered set $J$ (larger set $J$ - larger distance, i.e., coarser
picture, and the comparing objects are those of $\mathcal{D}$). In order to
realize this idea, we have followed the construction of the coarse shape
category obtained in [19]. Given a category $\mathcal{C}$ and a directed
partially ordered set $J$, in the first step (Section 3), each morphism set $%
(inv$-$\mathcal{C})(\boldsymbol{X},\boldsymbol{Y})$ is essentially enriched,
according to $J$, to the set $(inv^{J}$-$\mathcal{C})(\boldsymbol{X},%
\boldsymbol{Y})$ making a new category $inv^{J}$-$\mathcal{C}$ (with the
same object class - all inverse systems in $\mathcal{C}$). In the second
step, on each set $(inv^{J}$-$\mathcal{C})(\boldsymbol{X},\boldsymbol{Y})$
an equivalence relation is defined, according to $J$, that is compatible
with the composition so that there is the corresponding quotient category $%
(inv^{J}$-$\mathcal{C})/\sim $, denoted by $pro^{J}$-$\mathcal{C}$. In the
trivial case $J=\{1\}$, $pro^{\{1\}}$-$\mathcal{C}=pro$-$\mathcal{C}$, while
in the case of $J=\mathbb{N}$, $pro^{\mathbb{N}}$-$\mathcal{C}=pro^{\ast }$-$%
\mathcal{C}$ (of [19]). Then, for a suitable pair $\boldsymbol{X},%
\boldsymbol{Y}$ and an enough large $J$, in the set $(pro^{J}$-$\mathcal{C})(%
\boldsymbol{X},\boldsymbol{Y})$ may exist an isomorphism, while there is no
isomorphism in the set ($pro$-$\mathcal{C})(\boldsymbol{X},\boldsymbol{Y})$.
Finally, in the third step (Section 4), given a pro-reflective subcategory
pair $\mathcal{D}\subseteq \mathcal{C})$, the construction of the
appropriate $J$-shape category $Sh_{(\mathcal{C},\mathcal{D})}^{J}$ and the $%
J$-shape functor $S^{J}:\mathcal{C}\rightarrow Sh_{(\mathcal{C},\mathcal{D}%
)}^{J}$ follows by the usual standard pattern. Clearly, in the mentioned
special case, $Sh_{(\mathcal{C},\mathcal{D})}^{\{1\}}=Sh_{(\mathcal{C},%
\mathcal{D})}$ (the abstract shape category of [24]) and $Sh_{(\mathcal{C},%
\mathcal{D})}^{\mathbb{N}}=Sh_{(\mathcal{C},\mathcal{D})}^{?}$ (the abstract
coarse shape category of [19]) having their realizing categories $%
pro^{\{1\}} $-$\mathcal{D}=pro$-$\mathcal{D}$ and $pro^{\mathbb{N}}$-$%
\mathcal{D}=pro^{\ast }$-$\mathcal{D}$.

In Section 5 we have proven the continuity theorem for every $J$-shape
category. It strongly confirms that the $J$-shape theory is a genuine shape
theory. At the end (Section 6) we have proven the full analogue of the well
known Morita lemma of [26] that characterizes an isomorphism of $pro^{J}$-$%
\mathcal{C}$, which is then very useful for characterizing a $J$-shape
isomorphism in the corresponding realizing category $pro^{J}$-$\mathcal{D}$.

Of course, the whole of this should be firstly applied to the pro-reflective
category pair $(HTop;HPol)$ and to its subpair $(HcM,HcPol)$ (where only
sequential expansions are needed).

\section{Preliminaries}

We assume that the notion of a pro-category is well known as well as the
basics of the (abstract) shape theory, especially, via the inverse systems
approach due to Marde\v{s}i\'{c} and Segal, [24]. For the sake of
completeness, we shall briefly recall the needed notions and main facts
concerning a pro$^{\ast }$-category and the coarse shape obtained in [19].
The category language follows [14].

Let $\mathcal{C}$ be a category, and let $inv$-$\mathcal{C}$ be the
corresponding inv-category. Given a pair $\boldsymbol{X}$, $\boldsymbol{Y}$
of inverse systems in $\mathcal{C}$, a $\ast $\emph{-morphism} (originally,
an $S^{\ast }$-morphism) of $\boldsymbol{X}$ to $\boldsymbol{Y}$, denoted by

$(f,f_{\mu }^{n}):\boldsymbol{X}=(X_{\lambda },p_{\lambda \lambda ^{\prime
}},\Lambda )\rightarrow (Y_{\mu },q_{\mu \mu ^{\prime }},M)=\boldsymbol{Y}$,

\noindent is an ordered pair consisting of a function $f:M\rightarrow
\Lambda $ (the \emph{index function}) and, for each $\mu \in M$, of a
sequence $(f_{\mu }^{n})$ of $\mathcal{C}$-morphisms $f_{\mu }^{n}:X_{f(\mu
)}\rightarrow Y_{\mu }$, $n\in \mathbb{N}$, satisfying the following
condition:

$(\forall \mu \leq \mu ^{\prime }$ in $M)(\exists \lambda \in \Lambda ,$ $%
\lambda \geqslant f(\mu ),f\left( \mu ^{\prime }\right) (\exists n\in 
\mathbb{N}(\forall n^{\prime }\geqslant n)$

$f_{\mu }^{n^{\prime }}p_{f(\mu )\lambda }=q_{\mu \mu ^{\prime }}f_{\mu
^{\prime }}^{n^{\prime }}p_{f(\mu ^{\prime })\lambda }$.

\noindent Clearly, the equality then holds for every $\lambda ^{\prime }\geq
\lambda $ as well. If the index function $f$ is increasing and, for every
pair $\mu \leq \mu ^{\prime }$, one may put $\lambda =f(\mu ^{\prime })$,
then $(f,f_{\mu }^{n})$ is said to be a \emph{simple} $\ast $-morphism. If,
in addition, $M=\Lambda $ and $f=1_{\Lambda }$, then $(1_{\Lambda
},f_{\lambda }^{n})$ is said to be a \emph{level} $\ast $-morphism. Finally,
a $\ast $-morphism $(f,f_{\mu }^{n}):\boldsymbol{X}\rightarrow \boldsymbol{Y}
$ is said to be \emph{commutative} whenever, for every pair $\mu \leq \mu
^{\prime }$, one may put $n=1$.

\noindent If $\boldsymbol{Y}=\boldsymbol{X}$, the \emph{identity} $\ast $%
-morphism $(1_{\Lambda },1_{\lambda }^{n}):\boldsymbol{X}\rightarrow 
\boldsymbol{X}$ is defined by putting, for each $\lambda \in \Lambda $ and
every $n\in \mathbb{N}$, $1_{\lambda }^{n}\equiv 1_{\lambda }$ to be the
identity $\mathcal{C}$-morphism on $X_{\lambda }$. The \emph{composition} of 
$\left( f,f_{\mu }^{n}\right) :\boldsymbol{X}\rightarrow \boldsymbol{Y}$
with a $\ast $-morphism $\left( g,g_{\nu }^{n}\right) :\boldsymbol{Y}%
\rightarrow \boldsymbol{Z}=(Z_{\nu },r_{\nu \nu ^{\prime }},N)$ is defined by

$(h=fg,h_{\nu }^{n}=g_{\nu }^{n}f_{g\left( \nu \right) }^{n}):\boldsymbol{X}%
\rightarrow \boldsymbol{Z}$.

\noindent The category $inv^{\ast }$-$\mathcal{C}$ is now defined by putting 
$Ob(inv^{\ast }$-$\mathcal{C})=Ob(inv$-$\mathcal{C})$ and $(inv^{\ast }$-$%
\mathcal{C})(\boldsymbol{X},\boldsymbol{Y})$ to be the set of all $\ast $%
-morphisms of $\boldsymbol{X}$ to $\boldsymbol{Y}$.

A $\ast $-morphism $(f,f_{\mu }^{n}):\boldsymbol{X}\rightarrow \boldsymbol{Y}
$ is said to be \emph{equivalent to} a $\ast $-morphism $(f^{\prime },f_{\mu
}^{\prime n}):\boldsymbol{X}\rightarrow \boldsymbol{Y}$, denoted by $%
(f,f_{\mu }^{n})\sim (f^{\prime },f_{\mu }^{\prime n})$, if

$(\forall \mu \in M)(\exists \lambda \in \Lambda $, $\lambda \geqslant f(\mu
),f^{\prime }(\mu ))((\exists n\in \mathbb{N})(\forall n^{\prime }\geqslant
n)$

$f_{\mu }^{n^{\prime }}p_{f(\mu )\lambda }=f_{\mu }^{\prime n^{\prime
}}p_{f^{\prime }(\mu )\lambda }$.

\noindent The equality holds for every $\lambda ^{\prime }\geq \lambda $ as
well. The relation $\sim $ is an equivalence relation on each set $%
(inv^{\ast }$-$\mathcal{C})(\boldsymbol{X},\boldsymbol{Y}),$ and the
equivalence class $[(f,f_{\mu }^{n})]$ of $(f,f_{\mu }^{n}):\boldsymbol{X}%
\rightarrow \boldsymbol{Y}$ is briefly denoted by $\boldsymbol{f}^{\ast }$.
The equivalence relation $\sim $ is compatible with the composition, i.e.,
if $(f,f_{\mu }^{n})\sim (f^{\prime },f_{\mu }^{\prime n})\boldsymbol{\ }$%
and $(g,g_{\nu }^{n})\sim (g^{\prime },g_{\nu }^{\prime n}):\boldsymbol{Y}%
\rightarrow \boldsymbol{Z}$, then

$(g,g_{\nu }^{n})(f,f_{\mu }^{n})\sim (g^{\prime },g_{\nu }^{\prime
n})(f^{\prime },f_{\mu }^{\prime n}):\boldsymbol{X}\rightarrow \boldsymbol{Z}
$.$.$

The pro$^{\ast }$-category $pro^{\ast }$-$\mathcal{C}$ is now defined to be
the quotient category $(inv^{\ast }$-$\mathcal{C})/\sim $, i.e.,

$Ob(pro^{\ast }$-$\mathcal{C})=Ob(inv^{\ast }$-$\mathcal{C})$ \quad ($%
=Ob(inv $-$\mathcal{C})=Ob(pro$-$\mathcal{C})$),

$(pro^{\ast }$-$\mathcal{C})(\boldsymbol{X},\boldsymbol{Y})=(inv^{\ast }$-$%
\mathcal{C})(\boldsymbol{X},\boldsymbol{Y})/\sim $ =

$=\{\boldsymbol{f}^{\ast }=[(f,f_{\mu }^{n})]\mid (f,f_{\mu }^{n}):%
\boldsymbol{X}\rightarrow \boldsymbol{Y}\}$.

Finally, there exists a faithful functor $\underline{I}:pro$-$\mathcal{C}%
\rightarrow pro^{\ast }$-$\mathcal{C}$, keeping the object fixed, such that,
for every $\boldsymbol{f}=[(f,f_{\mu })]\in (pro$-$\mathcal{C})(\boldsymbol{%
XY})$,

$\underline{I}\left( \boldsymbol{f}\right) \equiv \boldsymbol{f}^{\ast
}=[(f,f_{\mu }^{n})]\in (pro^{\ast }$-$\mathcal{C})(\boldsymbol{X},%
\boldsymbol{Y}),$

\noindent where, for each $\mu \in M$ and every $n\in \mathbb{N}$, $f_{\mu
}^{n}=f_{\mu }$.

Let $\mathcal{D}$ be a full (not essential, but a convenient condition) and
pro-reflective subcategory of $\mathcal{C}$. Let $\boldsymbol{p}%
:X\rightarrow \boldsymbol{X}$ and $\boldsymbol{p}^{\prime }:X\rightarrow 
\boldsymbol{X}^{\prime }$ be $\mathcal{D}$-expansions of the same object $X$
of $\mathcal{C}$, and let $\boldsymbol{q}:Y\rightarrow \boldsymbol{Y}$ and $%
\boldsymbol{q}^{\prime }:Y\rightarrow \boldsymbol{Y}^{\prime }$ be $\mathcal{%
D}$-expansions of the same object $Y$ of $\mathcal{C}$. Then there exist two
canonical (unique) isomorphisms $\boldsymbol{i}:\boldsymbol{X}\rightarrow 
\boldsymbol{X}^{\prime }$ and $\boldsymbol{j}:\boldsymbol{Y}\rightarrow 
\boldsymbol{Y}^{\prime }$ of $pro$-$\mathcal{D}$. Consequently, $\boldsymbol{%
i}^{\ast }\equiv \underline{I}(\boldsymbol{i}):\boldsymbol{X}\rightarrow 
\boldsymbol{X}^{\prime }$ and $\boldsymbol{j}^{\ast }\equiv \underline{I}(%
\boldsymbol{j}):\boldsymbol{Y}\rightarrow \boldsymbol{Y}^{\prime }$ are
isomorphisms of $pro^{\ast }$-$\mathcal{D}$. A morphism $\boldsymbol{f}%
^{\ast }:\boldsymbol{X}\rightarrow \boldsymbol{Y}$ is said to be $pro^{\ast
} $-$\mathcal{D}$ \emph{equivalent to} a morphism $\boldsymbol{f}^{\prime
\ast }:\boldsymbol{X}^{\prime }\rightarrow \boldsymbol{Y}^{\prime }$,
denoted by $\boldsymbol{f}^{\ast }\sim \boldsymbol{f}^{\prime \ast }$, if
the following diagram in $pro^{\ast }$-$\mathcal{D}$ commutes:

$%
\begin{array}{ccc}
\boldsymbol{X} & \overset{\boldsymbol{i}^{\ast }}{\longrightarrow } & 
\boldsymbol{X}^{\prime } \\ 
\boldsymbol{f}^{\ast }\downarrow &  & \downarrow \boldsymbol{f}^{\prime \ast
} \\ 
\boldsymbol{Y} & \overset{\boldsymbol{j}^{\ast }}{\longrightarrow } & 
\boldsymbol{Y}^{\prime }%
\end{array}%
$.

\noindent According to the analogous facts in $pro$-$\mathcal{D}$, and since 
$\underline{I}$ is a functor, it defines an equivalence relation on the
appropriate subclass of $Mor(pro^{\ast }$-$\mathcal{D})$, such that $%
\boldsymbol{f}^{\ast }\sim \boldsymbol{f}^{\prime \ast }$ and $\boldsymbol{g}%
^{\ast }\sim \boldsymbol{g}^{\prime \ast }$ imply $\boldsymbol{g}^{\ast }%
\boldsymbol{f}^{\ast }\sim \boldsymbol{g}^{\prime \ast }\boldsymbol{f}%
^{\prime \ast }$ whenever it is defined. The equivalence class of an $%
\boldsymbol{f}^{\ast }$ is denoted by $\left\langle \boldsymbol{f}^{\ast
}\right\rangle $. Further, given $\boldsymbol{p}$, $\boldsymbol{p}^{\prime }$%
, $\boldsymbol{q}$, $\boldsymbol{q}^{\prime }$ and $\boldsymbol{f}^{\ast }$
as above, there exists a unique $\boldsymbol{f}^{\prime \ast }$ ($=%
\boldsymbol{j}^{\ast }\boldsymbol{f}^{\ast }(\boldsymbol{i}^{\ast })^{-1}$)
such that $\boldsymbol{f}^{\ast }\sim \boldsymbol{f}^{\prime \ast }$. Then
the (\emph{abstract}) \emph{coarse} \emph{shape category} $Sh_{(\mathcal{C},%
\mathcal{D})}^{\ast }$ \emph{for} $(\mathcal{C},\mathcal{D})$ is defined as
follows. The objects of $Sh_{(\mathcal{C},\mathcal{D})}^{\ast }$ are all the
objects of $\mathcal{C}$. A morphism $F^{\ast }\in Sh_{(\mathcal{C},\mathcal{%
D})}^{\ast }(X,Y)$ is the $(pro^{\ast }$-$\mathcal{D})$-equivalence class $%
\left\langle \boldsymbol{f}^{\ast }\right\rangle $ of a morphism $%
\boldsymbol{f}^{\ast }:\boldsymbol{X}\rightarrow \boldsymbol{Y}$, with
respect to any choice of a pair of $\mathcal{D}$-expansions $\boldsymbol{p}%
:X\rightarrow \boldsymbol{X}$, $\boldsymbol{q}:Y\rightarrow \boldsymbol{Y}$.
In other words, a \emph{coarse shape morphism} $F^{\ast }:X\rightarrow Y$ is
given by a diagram

$%
\begin{array}{ccc}
\boldsymbol{X} & \overset{\boldsymbol{p}}{\longleftarrow } & X \\ 
\boldsymbol{f}^{\ast }\downarrow & F^{\ast } &  \\ 
\boldsymbol{Y} & \overset{\boldsymbol{q}}{\longleftarrow } & Y%
\end{array}%
$.

\noindent The \emph{composition} of an $F^{\ast }:X\rightarrow Y$, $F^{\ast
}=\left\langle \boldsymbol{f}^{\ast }\right\rangle $ and a $G^{\ast
}:Y\rightarrow Z$, $G^{\ast }=\left\langle \boldsymbol{g}^{\ast
}\right\rangle $, is defined by any pair of their representatives, i.e. $%
G^{\ast }F^{\ast }:X\rightarrow Z$, $G^{\ast }F^{\ast }=\left\langle 
\boldsymbol{g}^{\ast }\boldsymbol{f}^{\ast }\right\rangle $. The\emph{\
identity coarse shape morphism} on an object $X$, $1_{X}^{\ast
}:X\rightarrow X$, is the $(pro^{\ast }$-$\mathcal{D})$-equivalence class $%
\left\langle \boldsymbol{1}_{\boldsymbol{X}}^{\ast }\right\rangle $ of the
identity morphism $\boldsymbol{1}_{\boldsymbol{X}}^{\ast }$ of $pro^{\ast }$-%
$\mathcal{D}$.

For every $\mathcal{C}$-morphism $f:X\rightarrow Y$ and every pair of $%
\mathcal{D}$-expansions $\boldsymbol{p}:X\rightarrow \boldsymbol{X}$, $%
\boldsymbol{q}:Y\rightarrow \boldsymbol{Y}$, there exists an $\boldsymbol{f}%
^{\ast }:\boldsymbol{X}\rightarrow \boldsymbol{Y}$ of $pro^{\ast }$-$%
\mathcal{D}$, such that the following diagram in $pro^{\ast }$-$\mathcal{C}$
commutes:

$%
\begin{array}{ccc}
\boldsymbol{X} & \overset{\boldsymbol{p}}{\longleftarrow } & X \\ 
\boldsymbol{f}^{\ast }\downarrow &  & \downarrow f \\ 
\boldsymbol{Y} & \overset{\boldsymbol{q}}{\longleftarrow } & Y%
\end{array}%
$.

\noindent (Hereby, $\mathcal{C}\subseteq pro$-$\mathcal{C}$ are considered
to be the subcategories of $pro^{\ast }$-$\mathcal{C}$!) The same $f$ and
another pair of $\mathcal{D}$-expansions $\boldsymbol{p}^{\prime
}:X\rightarrow \boldsymbol{X}^{\prime }$, $\boldsymbol{q}^{\prime
}:Y\rightarrow \boldsymbol{Y}^{\prime }$ yield an $\boldsymbol{f}^{\prime
\ast }:\boldsymbol{X}^{\prime }\rightarrow \boldsymbol{Y}^{\prime }$ in $%
pro^{\ast }$-$\mathcal{D}$. Then, however, $\boldsymbol{f}^{\ast }\sim 
\boldsymbol{f}^{\prime \ast }$ in $pro^{\ast }$-$\mathcal{D}$ must hold.
Thus, every morphism $f\in \mathcal{C}(X,Y)$ yields a $(pro^{\ast }$-$%
\mathcal{D})$-equivalence class $\left\langle \boldsymbol{f}^{\ast
}\right\rangle $, i.e., a coarse shape morphism $F^{\ast }\in Sh_{(\mathcal{C%
},\mathcal{D})}^{\ast }(X,Y)$. Therefore, by putting $S^{\ast }(X)=X$, $X\in
Ob\mathcal{C}$, and $S^{\ast }(f)=F^{\ast }=\left\langle \boldsymbol{f}%
^{\ast }\right\rangle $, $f\in Mor\mathcal{C}$, a unique functor

$S_{(\mathcal{C},\mathcal{D})}^{\ast }:\mathcal{C}\rightarrow Sh_{(\mathcal{C%
},\mathcal{D})}^{\ast }$,

\noindent called the \emph{abstract} \emph{coarse} \emph{shape functor}, is
defined. Moreover, the functor $S_{(\mathcal{C},\mathcal{D})}^{\ast }$
factorizes as $S_{(\mathcal{C},\mathcal{D})}^{\ast }=I_{(\mathcal{C},%
\mathcal{D})}S_{(\mathcal{C},\mathcal{D})}$, where $S_{(\mathcal{C},\mathcal{%
D})}:\mathcal{C}\rightarrow Sh_{(\mathcal{C},\mathcal{D})}$ is the abstract
shape functor, while $I_{(\mathcal{C},\mathcal{D})}:Sh_{(\mathcal{C},%
\mathcal{D})}\rightarrow Sh_{\mathcal{C},\mathcal{D})}^{\ast }$ is induced
by the \textquotedblleft inclusion\textquotedblright\ functor $\underline{I}%
\equiv \underline{I}_{\mathcal{D}}:pro$-$\mathcal{D}\rightarrow pro^{\ast }$-%
$\mathcal{D}$.

As in the case of the abstract shape, the most interesting example of the
above construction is $\mathcal{C}=HTop$ - the homotopy category of
topological spaces and $\mathcal{D}=HPol$ - the homotopy category of
polyhedra (or $\mathcal{D}=HANR$ - the homotopy category of ANR's for metric
spaces). In this case, one speaks about the (ordinary or standard) \emph{%
coarse shape category}

$Sh_{(HTop,HPol)}^{\ast }\equiv Sh^{\ast }(Top)\equiv Sh^{\ast }$

\noindent of topological spaces and of (ordinary or standard) \emph{coarse} 
\emph{shape functor}

$S^{\ast }:HTop\rightarrow Sh^{\ast }$,

\noindent which factorizes as $S^{\ast }=IS$, where $S:HTop\rightarrow Sh$
is the shape functor, and $I:Sh\rightarrow Sh^{\ast }$ is induced by the
\textquotedblleft inclusion\textquotedblright\ functor $\underline{I}\equiv
pro$-$HPol\rightarrow pro^{\ast }$-$HPol$.

The realizing category for $Sh^{\ast }$ is the category $pro^{\ast }$-$HPol$
(or $pro^{\ast }$-$HANR$). The underlying theory might be called the
(ordinary or standard) \emph{coarse shape theory} (for topological spaces).
Clearly, on locally nice spaces ( polyhedra, CW-complexes, ANR's, \ldots )
the coarse shape type classification coincides with the shape type
classification and, consequently, with the homotopy type classification.
However, in general (even for metrizable continua), the shape type
classification is strictly coarser than the homotopy type classification,
and the coarse shape type classification is strictly coarser than the shape
type classification.

\section{Enriched pro-categories}

Given a category $\mathcal{C}$, we are going to construct a class of
categories having the same objects - all inverse systems in the category $%
\mathcal{C}$ - by enriching the morphism sets such that $pro$-$\mathcal{C}$
and $pro^{\ast }$-$\mathcal{C}$ become the very special cases of these new
categories, so called \emph{enriched pro-categories.}

\begin{definition}
\label{D1}Let $\mathcal{C}$ be a category, let $\boldsymbol{X}=\left(
X_{\lambda },p_{\lambda \lambda ^{\prime }},\Lambda \right) $ and $%
\boldsymbol{Y}=(Y_{\mu },q_{\mu \mu ^{\prime }},M)$ be inverse systems in $%
\mathcal{C}$ and let $J=(J,\leq )$ be a directed partially ordered set. A $J$%
-\textbf{morphism }(\textbf{of }$\boldsymbol{X}$ \textbf{to} $\boldsymbol{Y}$%
\textbf{\ in} $\mathcal{C}$) is every triple $(\boldsymbol{X},(f,(f_{\mu
}^{j})),\boldsymbol{Y})$, denoted by $(f,f_{\mu }^{j}):\boldsymbol{X}%
\rightarrow \boldsymbol{Y}$, where $(f,(f_{\mu }^{j}))$ is an ordered pair
consisting of a function $f:M\rightarrow \Lambda $, called the \textbf{index
function}, and, for each $\mu \in M$, of a family $(f_{\mu }^{j})$ of $%
\mathcal{C}$-morphisms $f_{\mu }^{j}:X_{f\left( \mu \right) }\rightarrow
Y_{\mu },$ $j\in J,$ such that, for every related pair $\mu \leq \mu
^{\prime }$ in $M$, there exists a $\lambda \in \Lambda ,$ $\lambda
\geqslant f(\mu ),f\left( \mu ^{\prime }\right) $, and there exists a $j\in
J $ so that, for every $j^{\prime }\geqslant j$,

$f_{\mu }^{j^{\prime }}p_{f(\mu )\lambda }=q_{\mu \mu ^{\prime }}f_{\mu
^{\prime }}^{j^{\prime }}p_{f(\mu ^{\prime })\lambda }$.

\noindent If the index function $f$ is increasing and, for every pair $\mu
\leq \mu ^{\prime }$, one may put $\lambda =f(\mu ^{\prime })$, then $%
(f,f_{\mu }^{j})$ is said to be a \textbf{simple} $J$-morphism. If, in
addition, $M=\Lambda $ and $f=1_{\Lambda }$, then $(1_{\Lambda },f_{\lambda
}^{j})$ is said to be a \textbf{level} $J$-morphism. Further, if the
equality holds for every $j\in J$, then $(f,f_{\mu }^{j}):\boldsymbol{X}%
\rightarrow \boldsymbol{Y}$ is said to be a \textbf{commutative} $J$%
-morphism. (If there exists $\min J\equiv j_{\ast }$, the commutativity
means that one may put $j=j_{\ast }$.)
\end{definition}

\begin{remark}
\label{R1}The equality condition of Definition 1 obviously holds for every $%
\lambda ^{\prime }\geq \lambda $ as well. Every commutative $J$-morphism of
inverse systems $(f,f_{\mu }^{j}):\boldsymbol{X}\rightarrow \boldsymbol{Y}$
yields a family of morphisms $(f^{j}=f,f_{\mu }^{j}):\boldsymbol{X}%
\rightarrow \boldsymbol{Y}$, $j\in J$, of $i\alpha v$-$\mathcal{C}$. On the
other side, every family of simple morphisms $(f^{j},f_{\mu }^{j}):%
\boldsymbol{X}\rightarrow \boldsymbol{Y}$, $j\in J$, of $i\alpha v$-$%
\mathcal{C}$, such that $f^{j}=f$ for all $j$, determines the unique
commutative $J$-morphism of the inverse systems $(f,f_{\mu }^{j}):%
\boldsymbol{X}\rightarrow \boldsymbol{Y}$. This indicates the significant
difference between (a huge generalization of) the standard morphisms of
inverse systems comparing to the new $J$-morphisms.
\end{remark}

\begin{lemma}
\label{L1}Let $\left( f,f_{\mu }^{j}\right) :\boldsymbol{X}\rightarrow 
\boldsymbol{Y}$ and $\left( g,g_{\nu }^{j}\right) :\boldsymbol{Y}\rightarrow 
\boldsymbol{Z}=(Z_{\nu },r_{\nu \nu ^{\prime }},N)$ be $J$-morphisms (of
inverse systems in a category $\mathcal{C}$). Then $\left( h,h_{\nu
}^{j}\right) ,$ where $h=fg$ and $h_{\nu }^{j}=g_{\nu }^{j}f_{g\left( \nu
\right) }^{j},$ $j\in J$, $\nu \in N$, is a $J$-morphism of $\boldsymbol{X}$
to $\boldsymbol{Z}$.
\end{lemma}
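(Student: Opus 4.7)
The plan is to verify the three parts of Definition \ref{D1} for the proposed triple $(h,h_\nu^j)$. The type-checking is immediate: $h=fg$ is a function $N\to\Lambda$, and each $h_\nu^j=g_\nu^j f_{g(\nu)}^j$ is a composable pair, so $h_\nu^j\colon X_{f(g(\nu))}=X_{h(\nu)}\to Z_\nu$ is a $\mathcal{C}$-morphism for every $\nu\in N$ and $j\in J$. The substantive task is the coherence condition: given $\nu\leq\nu'$ in $N$, I must exhibit $\lambda\in\Lambda$ with $\lambda\geq h(\nu),h(\nu')$ and $j\in J$ such that $h_\nu^{j'}p_{h(\nu)\lambda}=r_{\nu\nu'}h_{\nu'}^{j'}p_{h(\nu')\lambda}$ for every $j'\geq j$.

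First I would feed $\nu\leq\nu'$ into the $J$-morphism property of $(g,g_\nu^j)$ to obtain $\mu\in M$ with $\mu\geq g(\nu),g(\nu')$ and a threshold $j_1\in J$ such that $g_\nu^{j'}q_{g(\nu)\mu}=r_{\nu\nu'}g_{\nu'}^{j'}q_{g(\nu')\mu}$ for all $j'\geq j_1$. Next I would apply the $J$-morphism property of $(f,f_\mu^j)$ twice, once to the pair $g(\nu)\leq\mu$ and once to $g(\nu')\leq\mu$, producing indices $\lambda_1,\lambda_2\in\Lambda$ above $h(\nu),f(\mu)$ and $h(\nu'),f(\mu)$ respectively, together with thresholds $j_2,j_3\in J$ yielding identities of the form $f_{g(\nu)}^{j'}p_{h(\nu)\lambda_i}=q_{g(\nu)\mu}f_\mu^{j'}p_{f(\mu)\lambda_i}$ (for $i=1$, and analogously with $\nu'$ for $i=2$).

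Then directedness takes over: pick $\lambda\in\Lambda$ above $\lambda_1,\lambda_2$ (so automatically $\lambda\geq h(\nu),h(\nu')$) and $j\in J$ above $j_1,j_2,j_3$. By Remark \ref{R1} the two $f$-identities persist when $\lambda_i$ is replaced by the larger $\lambda$ (precompose with $p_{\lambda_i\lambda}$). Left-multiplying those extended $f$-identities by $g_\nu^{j'}$ and by $r_{\nu\nu'}g_{\nu'}^{j'}$ respectively, and then invoking the $g$-identity $r_{\nu\nu'}g_{\nu'}^{j'}q_{g(\nu')\mu}=g_\nu^{j'}q_{g(\nu)\mu}$, collapses both sides of the target equation to the common value $g_\nu^{j'}q_{g(\nu)\mu}f_\mu^{j'}p_{f(\mu)\lambda}$, which is precisely the coherence condition needed for $(h,h_\nu^j)$.

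The only real obstacle is bookkeeping: aligning one choice of $\lambda$ and one choice of $j$ with three separate existential statements living in $\Lambda$ and $J$. This is exactly what the directedness hypotheses on $\Lambda$ and $J$ are there to handle, and the argument parallels the classical composition lemma for $inv$-$\mathcal{C}$ and for $inv^{\ast}$-$\mathcal{C}$, with the single threshold $n\in\mathbb{N}$ of the $\ast$-morphism case replaced by a threshold $j\in J$.
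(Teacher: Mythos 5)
Your proposal is correct and follows essentially the same route as the paper's proof: apply the $J$-morphism condition of $(g,g_{\nu}^{j})$ to $\nu\leq\nu'$ to get $\mu$ and a threshold, apply that of $(f,f_{\mu}^{j})$ to the two pairs $g(\nu)\leq\mu$ and $g(\nu')\leq\mu$, then use directedness of $\Lambda$ and $J$ to pick a common $\lambda$ and $j$ and chase the resulting identities. Your explicit collapse of both sides to $g_{\nu}^{j'}q_{g(\nu)\mu}f_{\mu}^{j'}p_{f(\mu)\lambda}$ is exactly the computation the paper leaves as "one straightforwardly establishes."
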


\begin{proof}
Let $\nu ,\nu ^{\prime }\in N$, $\nu \leq \nu ^{\prime }$, be given. Since $%
\left( g,g_{\nu }^{j}\right) $ is a $J$-morphism, there exists a $\mu \in M$%
, $\mu \geq g(\nu ),g(\nu ^{\prime })$, and there exists a $j_{0}\in J$ such
that, for every $j^{\prime }\geq j_{0}$,

$g_{\nu }^{j^{\prime }}q_{g(\nu )\mu }=r_{\nu \nu ^{\prime }}g_{\nu ^{\prime
}}^{j^{\prime }}q_{g(\nu ^{\prime })\mu }$.

\noindent Since $\left( f,f_{\mu }^{j}\right) $ is a $J$-morphism, for the
pair $g(\nu )\leq \mu $, there exist a $\lambda _{1}\geq fg(\nu ),f(\mu )$
in $\Lambda $ and a $j_{1}\in J$ such that, for every $j^{\prime }\geq j_{1}$%
,

$f_{g(\nu )}^{j^{\prime }}p_{fg(\nu )\lambda _{1}}=q_{g(\nu )\mu }f_{\mu
}^{j^{\prime }}p_{f(\mu )\lambda _{1}}$.

\noindent Further, for the pair $g(\nu ^{\prime })\leq \mu $, there exist a $%
\lambda _{2}\geq fg(\nu ^{\prime }),f(\mu )$ in $\Lambda $ and a $j_{2}\in J$
such that, for every $j^{\prime }\geq j_{2}$,

$f_{g(\nu ^{\prime \prime })}^{j^{\prime }}p_{fg(\nu ^{\prime })\lambda
_{2}}=q_{g(\nu ^{\prime })\mu }f_{\mu }^{j^{\prime }}p_{f(\mu )\lambda _{2}}$%
.

\noindent Since $\Lambda $ and $J$ are directed, there exist a $\lambda \in
\Lambda $, $\lambda \geq \lambda _{1},\lambda _{2}$, and a $j\in J$, $j\geq
j_{0},j_{1},j_{2},$ respectively. Then, for every $j^{\prime }\geq j$, one
straightforwardly establishes

$g_{\nu }^{j^{\prime }}f_{g(\nu )}^{j^{\prime }}p_{fg(\nu )\lambda }=r_{\nu
\nu ^{\prime }}g_{\nu ^{\prime }}^{j^{\prime }}f_{g(\nu ^{\prime
})}^{j^{\prime }}p_{fg(\nu ^{\prime })\lambda }$,

\noindent which proves that $(h=fg,h_{\nu }^{j}=g_{\nu }^{j}f_{g\left( \nu
\right) }^{j}):\boldsymbol{X}\rightarrow \boldsymbol{Z}$ is a $J$-morphism.
\end{proof}

Lemma 5. enables us to define the \emph{composition} of $J$-morphisms of
inverse systems: If $(f,f_{\mu }^{j}):\boldsymbol{X}\rightarrow \boldsymbol{Y%
}$ and $(g,g_{\nu }^{j}):\boldsymbol{Y}\rightarrow \boldsymbol{Z}$, then $%
(g,g_{\nu }^{j})(f,f_{\mu }^{j})=(h,h_{\nu }^{j}):\boldsymbol{X}\rightarrow 
\boldsymbol{Z}$, where $h=fg$ i $h_{\nu }^{j}=g_{\nu }^{j}f_{g(\nu )}^{j}$.
Clearly, this composition is associative.

\begin{lemma}
\label{L2}The composition of commutative $J$-morphisms of inverse systems in 
$\mathcal{C}$ is a commutative $J$-morphism.
\end{lemma}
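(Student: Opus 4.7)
The plan is to rerun the argument of Lemma 1 while tracking where commutativity lets us dispense with the $J$-threshold at each step. Let $(f,f_{\mu }^{j})\colon \boldsymbol{X}\rightarrow \boldsymbol{Y}$ and $(g,g_{\nu }^{j})\colon \boldsymbol{Y}\rightarrow \boldsymbol{Z}$ be commutative $J$-morphisms, and let $(h,h_{\nu }^{j})=(fg,g_{\nu }^{j}f_{g(\nu )}^{j})$ be their composition, which is already a $J$-morphism by Lemma 1. Fixing a pair $\nu \leq \nu ^{\prime }$ in $N$, the goal is to exhibit a single $\lambda \in \Lambda $ with $\lambda \geq fg(\nu ),fg(\nu ^{\prime })$ such that
\[
g_{\nu }^{j^{\prime }}f_{g(\nu )}^{j^{\prime }}p_{fg(\nu )\lambda }=r_{\nu \nu ^{\prime }}g_{\nu ^{\prime }}^{j^{\prime }}f_{g(\nu ^{\prime })}^{j^{\prime }}p_{fg(\nu ^{\prime })\lambda }
\]
holds for \emph{every} $j^{\prime }\in J$, which is precisely the commutativity condition (per Definition 1) for $(h,h_{\nu }^{j})$ at $\nu \leq \nu ^{\prime }$.

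The steps are: First, apply commutativity of $(g,g_{\nu }^{j})$ at $\nu \leq \nu ^{\prime }$ to obtain $\mu \geq g(\nu ),g(\nu ^{\prime })$ in $M$ with $g_{\nu }^{j^{\prime }}q_{g(\nu )\mu }=r_{\nu \nu ^{\prime }}g_{\nu ^{\prime }}^{j^{\prime }}q_{g(\nu ^{\prime })\mu }$ for every $j^{\prime }\in J$. Next, apply commutativity of $(f,f_{\mu }^{j})$ separately at $g(\nu )\leq \mu $ and at $g(\nu ^{\prime })\leq \mu $ to obtain $\lambda _{1}\geq fg(\nu ),f(\mu )$ and $\lambda _{2}\geq fg(\nu ^{\prime }),f(\mu )$ in $\Lambda $ with the corresponding equations valid for every $j^{\prime }\in J$. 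Finally, by directedness of $\Lambda $ pick any $\lambda \geq \lambda _{1},\lambda _{2}$; then the same purely algebraic chain of substitutions used at the end of Lemma 1's proof (composing the three equations via $p$-bonds and $q$-bonds to merge at $\lambda $) yields the displayed equation, and it does so for each individual $j^{\prime }\in J$.

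The point to emphasize, and what explains why no real obstacle appears, is that in Lemma 1 the threshold $j\in J$ had to be chosen as an upper bound of $j_{0},j_{1},j_{2}$ precisely because the three input equations held only for $j^{\prime }$ beyond those indices; commutativity eliminates each of those thresholds, so the directedness of $J$ is never invoked, only the directedness of $\Lambda $. Hence no $j$-selection is needed, and $(h,h_{\nu }^{j})$ meets the commutativity clause of Definition 1 at the arbitrary pair $\nu \leq \nu ^{\prime }$. The only bookkeeping care required is to verify, when writing up, that the derivation of the composite equation from the three inputs is algebraically identical for each $j^{\prime }$, so universal quantification over $j^{\prime }$ propagates through without change.
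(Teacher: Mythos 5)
Your argument is correct and is exactly the fleshed-out version of what the paper dismisses in one line as a ``straightforward consequence of the defining coordinatewise (by $j\in J$) composition'': you rerun the proof of Lemma 1, observing that since each input equation now holds for every $j\in J$, only directedness of $\Lambda$ (to merge $\lambda_{1},\lambda_{2}$) is needed and no $j$-threshold selection occurs, so the composite satisfies the commutativity clause of Definition 1. Nothing further is required.
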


\begin{proof}
It is a straightforward consequence of the defining coordinatewise (by $j\in
J$) composition.
\end{proof}

Given an inverse system $\boldsymbol{X}=(X_{\lambda },p_{\lambda \lambda
^{\prime }},\Lambda )$ in $\mathcal{C}$, let $\left( 1_{\Lambda
},1_{X_{\lambda }}^{j}\right) $, consists of the identity function $%
1_{\Lambda }$ and, for each $\lambda \in \Lambda $, of the family induced by
the same identity morphism $1_{X_{\lambda }}^{j}=1_{X_{\lambda }}$, $j\in J$%
, of $\mathcal{C}$. Then $(1_{\Lambda },1_{X_{\lambda }}^{j}):\boldsymbol{X}%
\rightarrow \boldsymbol{X}$ is a $J$-morphism (commutative and leveled). One
readily sees that, for every $(f,f_{\mu }^{j}):\boldsymbol{X}\rightarrow 
\boldsymbol{Y}$ and every $(g,g_{\lambda }^{j}):\boldsymbol{Z}\rightarrow 
\boldsymbol{X}$, $(f,f_{\mu }^{j})(1_{\Lambda },1_{X_{\lambda
}}^{j})=(f,f_{\mu }^{j})$ and $(1_{\Lambda },1_{X_{\lambda
}}^{j})(g,g_{\lambda }^{j})=(g,g_{\lambda }^{j})$ hold. Thus, $(1_{\Lambda
},1_{X_{\lambda }}^{j})$ may be called the \emph{identity} $J$-morphism on $%
\boldsymbol{X}$.

By summarizing, for every category $\mathcal{C}$ and every directed
partially ordered set $J$, there exists a category, denoted by $inv^{J}$-$%
\mathcal{C}$, consisting of the object class $Ob(inv^{J}$-$\mathcal{C}%
)=Ob(inv$-$\mathcal{C})$ and of the morphism class $Mor(inv^{J}$-$\mathcal{C}%
)$ of all the sets $(inv^{J}$-$\mathcal{C})(\boldsymbol{X},\boldsymbol{Y})$
of all $J$-morphisms $(f,f_{\mu }^{n})$ of $\boldsymbol{X}$ to $\boldsymbol{Y%
}$, endowed with the composition and identities described above. By Lemma
2., there exists a subcategory $(inv^{J}$-$\mathcal{C})_{c}$ of $inv^{J}$-$%
\mathcal{C}$ with the same object class and with the morphism class $%
Mor(inv^{J}$-$\mathcal{C})_{c}$ consisting of all commutative $J$-morphisms
of inverse systems in $\mathcal{C}$.

Let us now define an appropriate equivalence relation on each set $(inv^{J}$-%
$\mathcal{C})(\boldsymbol{X},\boldsymbol{Y})$.

\begin{definition}
\label{D2}A $J$-morphism $(f,f_{\mu }^{j}):\boldsymbol{X}\rightarrow 
\boldsymbol{Y}$ of inverse systems in $\mathcal{C}$ is said to be \textbf{%
equivalent to} a $J$-morphism $(f^{\prime },f_{\mu }^{\prime j}):\boldsymbol{%
X}\rightarrow \boldsymbol{Y}$, denoted by $(f,f_{\mu }^{j})\sim (f^{\prime
},f_{\mu }^{\prime j}),if$ every $\mu \in M$ admits a $\lambda \in \Lambda $%
, $\lambda \geqslant f(\mu ),f^{\prime }(\mu )$, and a $j\in J$ such that,
for every $j^{\prime }\geq j$,

$f_{\mu }^{j^{\prime }}p_{f(\mu )\lambda }=f_{\mu }^{\prime j^{\prime
}}p_{f^{\prime }(\mu )\lambda }$.
\end{definition}

\begin{lemma}
\label{L3}The defining equality holds for every $\lambda ^{\prime }\geq
\lambda $ as well, and the relation $\sim $ is an equivalence relation on
each set $(inv^{J}$-$\mathcal{C})(\boldsymbol{X},\boldsymbol{Y}).$ The
equivalence class $[(f,f_{\mu }^{j})]$ of a $J$-morphism $(f,f_{\mu }^{j}):%
\boldsymbol{X}\rightarrow \boldsymbol{Y}$ is briefly denoted by $\boldsymbol{%
f}^{J}\equiv \boldsymbol{f}$.
\end{lemma}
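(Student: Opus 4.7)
The plan is to verify the two assertions in order, noting that each is essentially a routine manipulation with bonding morphisms and with directedness.

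For the extension of the equality to all $\lambda^{\prime}\geq \lambda$, I would use the fundamental identity $p_{f(\mu)\lambda^{\prime}}=p_{f(\mu)\lambda}p_{\lambda\lambda^{\prime}}$ and similarly $p_{f^{\prime}(\mu)\lambda^{\prime}}=p_{f^{\prime}(\mu)\lambda}p_{\lambda\lambda^{\prime}}$ (valid since $\lambda^{\prime}\geq \lambda \geq f(\mu),f^{\prime}(\mu)$). Post-composing both sides of the defining equality on the right by $p_{\lambda\lambda^{\prime}}$ yields $f_{\mu}^{j^{\prime}}p_{f(\mu)\lambda^{\prime}}=f_{\mu}^{\prime j^{\prime}}p_{f^{\prime}(\mu)\lambda^{\prime}}$ for every $j^{\prime}\geq j$, which is what is wanted.

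For the equivalence relation claim, reflexivity is immediate: for $(f,f_{\mu}^{j})\sim (f,f_{\mu}^{j})$ take $\lambda=f(\mu)$ and any $j\in J$, so both sides reduce to $f_{\mu}^{j^{\prime}}$. Symmetry is visible from the symmetric form of the defining equality. The real content is transitivity, and this is where I would spend the most care. Given $(f,f_{\mu}^{j})\sim (f^{\prime},f_{\mu}^{\prime j})$ and $(f^{\prime},f_{\mu}^{\prime j})\sim (f^{\prime\prime},f_{\mu}^{\prime\prime j})$, fix $\mu \in M$ and pick witnesses $\lambda_{1}\geq f(\mu),f^{\prime}(\mu)$, $j_{1}\in J$ for the first relation and $\lambda_{2}\geq f^{\prime}(\mu),f^{\prime\prime}(\mu)$, $j_{2}\in J$ for the second. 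Using the directedness of $\Lambda$ choose $\lambda\geq \lambda_{1},\lambda_{2}$ (note $\lambda\geq f(\mu),f^{\prime\prime}(\mu)$ automatically), and using the directedness of $J$ choose $j\geq j_{1},j_{2}$. By the first part of the lemma, both defining equalities persist at $\lambda$, and so for every $j^{\prime}\geq j$ we get the chain
$$f_{\mu}^{j^{\prime}}p_{f(\mu)\lambda}=f_{\mu}^{\prime j^{\prime}}p_{f^{\prime}(\mu)\lambda}=f_{\mu}^{\prime\prime j^{\prime}}p_{f^{\prime\prime}(\mu)\lambda},$$
yielding $(f,f_{\mu}^{j})\sim (f^{\prime\prime},f_{\mu}^{\prime\prime j})$.

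No deep obstacle is anticipated; the only place one must be attentive is in transitivity, where both the directedness of $\Lambda$ and of $J$ are needed simultaneously, and one must use the first part of the lemma to promote the two witnesses $\lambda_{1},\lambda_{2}$ to the common $\lambda$. The final denotation $\boldsymbol{f}^{J}\equiv \boldsymbol{f}$ for the equivalence class $[(f,f_{\mu}^{j})]$ is purely notational and requires no argument.
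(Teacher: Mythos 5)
Your proposal is correct and follows essentially the same route as the paper: the first claim by right-composition with the bonding morphism $p_{\lambda \lambda ^{\prime }}$, and transitivity by choosing, via directedness of $\Lambda $ and $J$, a common $\lambda \geq \lambda _{1},\lambda _{2}$ and $j\geq j_{1},j_{2}$ and promoting both witness equalities to $\lambda $. The paper's proof is just a terser version of the same argument.
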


\begin{proof}
The first claim is trivial. The relation $\sim $ is obviously reflexive and
symmetric. To prove transitivity, let, for a given $\mu \in M$, the indices $%
\lambda _{1}$ and $j_{1}$ realize the first relation, $(f,f_{\mu }^{j})\sim
(f^{\prime },f_{\mu }^{\prime j})$, and the indices $\lambda _{2}$ and $%
j_{2} $ - the second one - $(f^{\prime },f_{\mu }^{\prime j})\sim (f^{\prime
\prime },f_{\mu }^{\prime \prime j})$. Since $\Lambda $ and $J$ are
directed, there exist a $\lambda \geq \lambda _{1},\lambda _{2}$ and a $%
j\geq j_{1},j_{2}$ respectively, that realize transitivity, $(f,f_{\mu
}^{j})\sim f^{\prime \prime },f_{\mu }^{\prime \prime j})$.
\end{proof}

\begin{lemma}
\label{L4}Let $(f,f_{\mu }^{j}),(f^{\prime },f_{\mu }^{\prime j}):%
\boldsymbol{X}\rightarrow \boldsymbol{Y}$ and $(g,g_{\nu }^{j}),(g^{\prime
},g_{\nu }^{\prime j}):\boldsymbol{Y}\rightarrow \boldsymbol{Z}$ be $J$%
-morphisms of inverse systems in $\mathcal{C}$. If $(f,f_{\mu }^{j})\sim
(f^{\prime },f_{\mu }^{\prime j})$ and $(g,g_{\nu }^{j})\sim (g^{\prime
},g_{\nu }^{\prime j})$, then $(g,g_{\nu }^{j})(f,f_{\mu }^{j})\sim
(g^{\prime },g_{\nu }^{\prime j})(f^{\prime },f_{\mu }^{\prime j})$.
\end{lemma}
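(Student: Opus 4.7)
The plan is to verify Definition~\ref{D2} for the composites $(fg,\,g_{\nu}^{j}f_{g(\nu)}^{j})$ and $(f^{\prime}g^{\prime},\,g_{\nu}^{\prime j}f_{g^{\prime}(\nu)}^{\prime j})$. Fixing an arbitrary $\nu\in N$, the task reduces to producing a single $\lambda\in\Lambda$ with $\lambda\geqslant fg(\nu),f^{\prime}g^{\prime}(\nu)$ and a single $j\in J$ such that, for all $j^{\prime}\geqslant j$,
\[
g_{\nu}^{j^{\prime}}f_{g(\nu)}^{j^{\prime}}p_{fg(\nu)\lambda}=g_{\nu}^{\prime j^{\prime}}f_{g^{\prime}(\nu)}^{\prime j^{\prime}}p_{f^{\prime}g^{\prime}(\nu)\lambda}.
\]

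I would first harvest four pieces of data. Applying $(g,g_{\nu}^{j})\sim(g^{\prime},g_{\nu}^{\prime j})$ at $\nu$ yields a pivot $\mu\in M$ with $\mu\geqslant g(\nu),g^{\prime}(\nu)$ and $j_{0}\in J$ satisfying $g_{\nu}^{j^{\prime}}q_{g(\nu)\mu}=g_{\nu}^{\prime j^{\prime}}q_{g^{\prime}(\nu)\mu}$ for $j^{\prime}\geqslant j_{0}$. Applying the $J$-morphism property of $(f,f_{\mu}^{j})$ to the pair $g(\nu)\leq\mu$ produces $\lambda_{1}\geqslant fg(\nu),f(\mu)$ and $j_{1}\in J$ with $f_{g(\nu)}^{j^{\prime}}p_{fg(\nu)\lambda_{1}}=q_{g(\nu)\mu}f_{\mu}^{j^{\prime}}p_{f(\mu)\lambda_{1}}$ for $j^{\prime}\geqslant j_{1}$; the analogous step for $(f^{\prime},f_{\mu}^{\prime j})$ at $g^{\prime}(\nu)\leq\mu$ produces $\lambda_{2},j_{2}$. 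Finally, $(f,f_{\mu}^{j})\sim(f^{\prime},f_{\mu}^{\prime j})$ applied at $\mu$ produces $\lambda_{3},j_{3}$. By directedness of $\Lambda$ and of $J$, I would then pick $\lambda\geqslant\lambda_{1},\lambda_{2},\lambda_{3}$ and $j\geqslant j_{0},j_{1},j_{2},j_{3}$; Lemma~\ref{L3} (and the stability remark in Remark~\ref{R1}) ensures that each of the four equalities still holds with $\lambda$ in place of the corresponding $\lambda_{i}$.

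The rest is a four-step chase applied to the left-hand side. First, I would use the $J$-morphism identity for $f$ to rewrite it as $g_{\nu}^{j^{\prime}}q_{g(\nu)\mu}f_{\mu}^{j^{\prime}}p_{f(\mu)\lambda}$; then apply $f\sim f^{\prime}$ at $\mu$ to replace $f_{\mu}^{j^{\prime}}p_{f(\mu)\lambda}$ by $f_{\mu}^{\prime j^{\prime}}p_{f^{\prime}(\mu)\lambda}$; then apply $g\sim g^{\prime}$ at $\nu$ to replace $g_{\nu}^{j^{\prime}}q_{g(\nu)\mu}$ by $g_{\nu}^{\prime j^{\prime}}q_{g^{\prime}(\nu)\mu}$; and finally use the $J$-morphism identity for $f^{\prime}$ in reverse to collapse $q_{g^{\prime}(\nu)\mu}f_{\mu}^{\prime j^{\prime}}p_{f^{\prime}(\mu)\lambda}$ to $f_{g^{\prime}(\nu)}^{\prime j^{\prime}}p_{f^{\prime}g^{\prime}(\nu)\lambda}$, yielding the desired right-hand side. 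The only genuine obstacle is bookkeeping: the four ingredients come with independent quantifiers over $\Lambda$ and $J$, and one must verify that the common choice of $(\lambda,j)$ validates every substitution simultaneously. This is exactly where directedness of $\Lambda$ and $J$, combined with the stability of the defining equalities under enlargement of $\lambda$, does all the work; no further categorical content beyond those inputs is required.
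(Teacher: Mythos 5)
Your argument is correct: the harvested data are exactly what is needed, the common $(\lambda,j)$ exists by directedness, and each substitution in the chase is legitimate because the defining equalities are stable under enlarging $\lambda$ (pre-composition with a bonding morphism), as you note. The only real difference from the paper is organizational: the paper invokes transitivity (Lemma~\ref{L3}) to split the claim into two half-steps, first $(g,g_{\nu }^{j})(f,f_{\mu }^{j})\sim (g,g_{\nu }^{j})(f^{\prime },f_{\mu }^{\prime j})$ (which needs only $f\sim f^{\prime }$ at $\mu =g(\nu )$) and then $(g,g_{\nu }^{j})(f,f_{\mu }^{j})\sim (g^{\prime },g_{\nu }^{\prime j})(f,f_{\mu }^{j})$ (which uses the pivot $\mu \geq g(\nu ),g^{\prime }(\nu )$ and the $J$-morphism property of $f$ at both pairs $g(\nu )\leq \mu $ and $g^{\prime }(\nu )\leq \mu $), whereas you do a single four-step chase that additionally calls on the $J$-morphism property of $f^{\prime }$ and on $f\sim f^{\prime }$ at the pivot $\mu $. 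The mechanism — pivot index from $g\sim g^{\prime }$, the $J$-morphism commutation relations to move bonding morphisms past the $f$'s, and directedness of $\Lambda $ and $J$ to reconcile the quantifiers — is the same in both; the paper's split yields two shorter verifications at the cost of an extra appeal to transitivity, while your merged version costs slightly heavier bookkeeping but proves the statement in one pass.
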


\begin{proof}
According to Lemma 3. (transitivity), it suffices to prove that $(g,g_{\nu
}^{j})(f,f_{\mu }^{j})\sim (g,g_{\nu }^{j})(f^{\prime },f_{\mu }^{\prime j})$
and $(g,g_{\nu }^{j})(f,f_{\mu }^{j})\sim (g^{\prime },g_{\nu }^{\prime
j})(f,f_{\mu }^{j})$. Given a $\nu \in N$, choose a $\lambda \in \Lambda $, $%
\lambda \geq fg(\nu ),f^{\prime }g(\nu )$, and a $j\in J$, by $(f,f_{\mu
}^{j})\sim (f^{\prime },f_{\mu }^{\prime j})$ for $\mu =g(\nu )$. Then, for
every $j^{\prime }\geq j$,

$g_{\nu }^{j^{\prime }}f_{g(\nu )}^{j^{\prime }}p_{fg(\nu )\lambda }=g_{\nu
}^{j^{\prime }}f_{g(\nu )}^{\prime j^{\prime }}p_{f^{\prime }g(\nu )\lambda
} $.

\noindent Thus, $(g,g_{\nu }^{j})(f,f_{\mu }^{j})\sim (g,g_{\nu
}^{j})(f^{\prime },f_{\mu }^{\prime j})$. Further, if $(g,g_{\nu }^{j})\sim
(g^{\prime },g_{\nu }^{\prime j})$, then for a given $\nu \in N$ there exist
a $\mu \geq g(\nu ),g^{\prime }(\nu )$ and a $j_{1}$ such that

$g_{\nu }^{j^{\prime }}q_{g(\nu )\mu }=g_{\nu }^{\prime j^{\prime
}}q_{g^{\prime }(\nu )\mu }$,

\noindent whenever $j^{\prime }\geq j_{1}$. Since $(f,f_{\mu }^{j})$ is a $J$%
-morphism, there exist a $\lambda _{1}\geq fg(\nu ),f(\mu )$ and a $j_{2}$
such that, for every $j^{\prime }\geq j_{2}$,

$f_{g(\nu )}^{j^{\prime }}p_{fg(\nu )\lambda _{1}}=q_{g(\nu )\mu }f_{\mu
}^{j^{\prime }}p_{f(\mu )\lambda _{1}}$.

\noindent In the same way, there exist a $\lambda _{2}\geq fg^{\prime }(\nu
),f(\mu )$ and a $j_{3}$ such that, for every $j^{\prime }\geq j_{3}$,

$f_{g^{\prime }(\nu )}^{j^{\prime }}p_{fg^{\prime }(\nu )\lambda
_{2}}=q_{g^{\prime }(\nu )\mu }f_{\mu }^{j^{\prime }}p_{f(\mu )\lambda _{2}}$%
.

\noindent Since $\Lambda $ and $J$ are directed, there exist a $\lambda \geq
\lambda _{1},\lambda _{2}$ and a $j\geq j_{1},j_{2},j_{3}$ respectively.
Then, for every $j^{\prime }\geq j$,

$g_{\nu }^{j^{\prime }}f_{g(\nu )}^{j^{\prime }}p_{fg(\nu )\lambda }=g_{\nu
}^{\prime j^{\prime }}f_{g^{\prime }(\nu )}^{j^{\prime }}p_{fg^{\prime }(\nu
)\lambda }$.

\noindent Therefore, $(g,g_{\nu }^{j})(f,f_{\mu }^{j})\sim (g^{\prime
},g_{\nu }^{\prime j})(f,f_{\mu }^{j})$.
\end{proof}

By Lemmata 3 and 4 one may compose the equivalence classes of $J$-morphisms
of inverse systems in $\mathcal{C}$ by means of any pair of their
representatives, i.e., $\boldsymbol{gf}=\boldsymbol{h}\equiv \lbrack
(h,h_{\nu }^{j})]$, where $(h,h_{\nu }^{j})=(g,g_{\nu }^{j})(f,f_{\mu
}^{j})=(fg,g_{\nu }^{j}f_{g(\nu )}^{j})$. The corresponding quotient
category $(inv^{J}$-$\mathcal{C})/_{\sim }$ is denoted by $pro^{J}$-$%
\mathcal{C}$. There exists a subcategory $(pro^{J}$-$\mathcal{C}%
)_{c}\subseteq pro^{J}$-$\mathcal{C}$ determined by all equivalence classes
having commutative representatives. Clearly, $(pro^{J}$-$\mathcal{C})_{c}$
is isomorphic to the quotient category $(inv^{J}$-$\mathcal{C})_{c}/_{\sim }$%
. Further, one may consider $pro$-$\mathcal{C}=(inv$-$\mathcal{C})/\sim $ as
a subcategory of $(pro^{J}$-$\mathcal{C})_{c}$ and, consequently, as a
subcategory of $pro^{J}$-$\mathcal{C}$ (see also Theorem 1 below). First,
recall the well known lemma (see [24], Lemma I. 1.1.):

\begin{lemma}
\label{L5}Let $(\Lambda ,\leq )$ be a directed set and let $(M,\leq )$ be a
cofinite directed set. Then every function $f:M\rightarrow \Lambda $ admits
an increasing function $f^{\prime }:M\rightarrow \Lambda $ such that $f\leq
f^{\prime }$.
\end{lemma}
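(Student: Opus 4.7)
I would prove Lemma 5 by transfinite-style induction on the (finite) rank of elements of $M$, where the rank comes from cofiniteness.

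The plan is to use cofiniteness to produce a well-founded rank function on $M$, then define $f'$ by induction, using directedness of $\Lambda$ at each step to dominate a finite collection of previously chosen values. Concretely, for each $\mu \in M$ let $P(\mu) = \{\mu' \in M : \mu' \leq \mu\}$. By the cofiniteness hypothesis $P(\mu)$ is finite, so $r(\mu) := |P(\mu)|$ is a nonnegative integer, and $\mu_1 < \mu_2$ implies $P(\mu_1) \subsetneq P(\mu_2)$ (since $\mu_2 \in P(\mu_2) \setminus P(\mu_1)$), whence $r(\mu_1) < r(\mu_2)$. Thus $r$ is strictly monotone and will serve as the induction parameter.

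Next I would build $f'$ recursively on $r(\mu)$. For the base $r(\mu) = 1$ (so $\mu$ is $M$-minimal) put $f'(\mu) := f(\mu)$. For the inductive step, assume $f'(\mu')$ has been chosen for every $\mu' \in M$ with $r(\mu') < r(\mu)$ in such a way that $f'(\mu') \geq f(\mu')$ and $f'$ is increasing on those indices. The set
\[
S(\mu) := \{f(\mu)\} \cup \{f'(\mu') : \mu' < \mu\}
\]
is finite (being a subset of the finite set indexed by $P(\mu)$), and $\Lambda$ is directed, so by a standard finite induction in $\Lambda$ there exists $\lambda_\mu \in \Lambda$ dominating every element of $S(\mu)$; set $f'(\mu) := \lambda_\mu$. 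By construction $f'(\mu) \geq f(\mu)$, giving $f \leq f'$ pointwise.

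It remains to check monotonicity of $f'$. If $\mu_1 \leq \mu_2$, then either $\mu_1 = \mu_2$ (trivial) or $\mu_1 < \mu_2$, and in the latter case $f'(\mu_1) \in S(\mu_2)$, whence $f'(\mu_2) \geq f'(\mu_1)$ by choice of $\lambda_{\mu_2}$. Thus $f'$ is increasing and $f \leq f'$, as required.

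The only point requiring care is the legitimacy of the inductive construction: one needs to know that $r$ is well-defined and strictly increases along $<$, which is exactly what cofiniteness provides (without cofiniteness, $P(\mu)$ could be infinite and directedness of $\Lambda$ would not suffice to dominate it). This is the conceptual core of the lemma; the rest is a routine recursion using finite directed joins in $\Lambda$.
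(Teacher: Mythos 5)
Your argument is correct, and it is essentially the classical proof: the paper itself gives no proof of this lemma but simply cites it as Lemma I.1.1 of Marde\v{s}i\'{c}--Segal [24], where the argument is exactly your induction on the (finite) number of predecessors, choosing at each stage an upper bound in the directed set $\Lambda$ for $f(\mu)$ together with the finitely many already defined values $f'(\mu')$, $\mu'<\mu$. So the proposal matches the intended (cited) proof; the only implicit point worth noting is that strict monotonicity of your rank function uses antisymmetry of $\leq$ on $M$, which holds in the partially ordered setting the paper works in.
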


\begin{lemma}
\label{L6}Let $\boldsymbol{X}=(X_{\lambda },p_{\lambda \lambda ^{\prime
}},\Lambda )$ and $\boldsymbol{Y}=(Y_{\mu },q_{\mu \mu ^{\prime }},M)$ be
inverse systems in $\mathcal{C}$ with $M$ cofinite. Then every morphism $%
\boldsymbol{f}=[(f,f_{\mu }^{j})]:\boldsymbol{X}\rightarrow \boldsymbol{Y}$
of $pro^{J}$-$\mathcal{C}$ admits a simple representative $(f^{\prime
},f_{\mu }^{\prime j}):\boldsymbol{X}\rightarrow \boldsymbol{Y}$.
\end{lemma}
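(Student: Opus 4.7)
The plan is to perform a well-founded induction on $(M,\leq)$, exploiting the fact that cofiniteness makes the set $P(\mu)=\{\mu''\in M:\mu''<\mu\}$ finite for every $\mu\in M$. I will inductively define an increasing $f':M\rightarrow\Lambda$ with $f'(\mu)\geq f(\mu)$, and put
$$f_\mu'^{\,j}\ :=\ f_\mu^{\,j}\circ p_{f(\mu)\,f'(\mu)}\ :\ X_{f'(\mu)}\to Y_\mu,$$
the point being that each $f'(\mu)$ must dominate not only $f(\mu)$ and the already-constructed $f'(\mu'')$ for $\mu''<\mu$, but also the witnesses coming from the $J$-morphism condition.

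Concretely, suppose $f'(\mu'')$ has been defined for every $\mu''\in P(\mu)$. For each such $\mu''$, apply the $J$-morphism condition of $(f,f_\mu^j)$ to the pair $\mu''\leq\mu$ to obtain some $\lambda_{\mu''}\in\Lambda$ with $\lambda_{\mu''}\geq f(\mu''),f(\mu)$ and some $j_{\mu''}\in J$ such that, for all $j'\geq j_{\mu''}$,
$$f_{\mu''}^{\,j'}p_{f(\mu'')\,\lambda_{\mu''}}\ =\ q_{\mu''\mu}\,f_\mu^{\,j'}p_{f(\mu)\,\lambda_{\mu''}}.$$
Since $P(\mu)$ is finite and $\Lambda$ is directed, choose $f'(\mu)\in\Lambda$ dominating the finite set $\{f(\mu)\}\cup\{f'(\mu''):\mu''\in P(\mu)\}\cup\{\lambda_{\mu''}:\mu''\in P(\mu)\}$. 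Clauses $f'(\mu)\geq f(\mu)$ and $f'(\mu)\geq f'(\mu'')$ give monotonicity, and Remark \ref{R1} lets the displayed equality propagate from $\lambda_{\mu''}$ up to $f'(\mu)$.

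With this setup, simplicity is verified as follows: for each pair $\mu''\leq\mu$, take $\lambda=f'(\mu)$ and $j=\max\{j_{\mu''}:\mu''\in P(\mu)\}$ (finite max, using directedness of $J$); then for every $j'\geq j$ one computes
$$f_{\mu''}'^{\,j'}p_{f'(\mu'')\,f'(\mu)}=f_{\mu''}^{\,j'}p_{f(\mu'')\,f'(\mu)}=q_{\mu''\mu}f_\mu^{\,j'}p_{f(\mu)\,f'(\mu)}=q_{\mu''\mu}f_\mu'^{\,j'},$$
using the composition law for bonding morphisms and the propagated equality. The equivalence $(f,f_\mu^j)\sim(f',f_\mu'^{\,j})$ is immediate from the very definition of $f_\mu'^{\,j}$: for any $\mu\in M$, the choice $\lambda=f'(\mu)$ yields $f_\mu'^{\,j'}p_{f'(\mu)\,f'(\mu)}=f_\mu'^{\,j'}=f_\mu^{\,j'}p_{f(\mu)\,f'(\mu)}$ for all $j'\in J$.

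The only real obstacle is organizing the induction cleanly; the heart of the argument is the use of cofiniteness of $M$ to keep the upper bound required for $f'(\mu)$ over a \emph{finite} subset of $\Lambda$, together with Remark \ref{R1} to transport the $J$-morphism equalities from the original witnesses $\lambda_{\mu''}$ to the chosen $f'(\mu)$.
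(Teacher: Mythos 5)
Your proposal is correct and follows essentially the same route as the paper: for each $\mu$ you use cofiniteness to dominate the finitely many witnesses $\lambda_{\mu''}$ coming from the pairs $\mu''\leq\mu$, obtain an increasing index function $f'\geq f$, and set $f_{\mu}^{\prime j}=f_{\mu}^{j}p_{f(\mu)f'(\mu)}$, verifying simplicity and equivalence exactly as the paper does. The only difference is cosmetic: the paper first defines the auxiliary function $\varphi$ and then invokes Lemma \ref{L5} to monotonize it, whereas you inline that monotonization as a direct recursion over the finite predecessor sets.
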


\begin{proof}
Let $\mu \in M$. If $\mu $ has no predecessors, choose any $\lambda \in
\Lambda $, $\lambda \geq f(\mu )$, and put $\varphi (\mu )=\lambda $. If $%
\mu $ is not an initial element of $M$, let $\mu _{1},\ldots ,\mu _{m}\in M$%
, $m\in \mathbb{N}$, be all the predecessors of $\mu $ ($M$ is cofinite).
Since $(f,f_{\mu }^{j})$ is a $J$-morphism, for every $i\in \{1,\ldots ,m\}$
and every pair $\mu _{i}\leq \mu $, there exists a $\lambda _{i}\in \Lambda $%
, $\lambda _{i}\geq f(\mu _{i}),f(\mu )$, and there exists a $j_{i}\in J$,
such that, for every $j^{\prime }\geq j_{i}$, the appropriate condition
holds. Choose any $\lambda \in \Lambda $, $\lambda \geq \lambda _{i}$ for
all $i\in \{1,\ldots ,m\}$ ($\Lambda $ is directed), and put $\varphi (\mu
)=\lambda $. This defines a function $\varphi :M\rightarrow \Lambda $.
Notice that $f\leq \varphi $. By Lemma 5., there exists an increasing
function $f^{\prime }:M\rightarrow \Lambda $ such that $\varphi \leq
f^{\prime }$. Hence, $f\leq f^{\prime }$. Now, for every $\mu \in M$, put $%
f_{\mu }^{\prime j}=f_{\mu }^{j}p_{f(\mu )f^{\prime }(\mu )}$. One readily
verifies that $(f^{\prime },f_{\mu }^{\prime j}):\boldsymbol{X}\rightarrow 
\boldsymbol{Y}$ is a simple $J$-morphism and that $(f^{\prime },f_{\mu
}^{\prime j})\sim (f,f_{\mu }^{j})$.
\end{proof}

Let us define a certain functor $\underline{I}\equiv \underline{I}_{\mathcal{%
C}}^{J}:pro$-$\mathcal{C}\rightarrow pro^{J}$-$\mathcal{C}.$ Put $\underline{%
I}\left( \boldsymbol{X}\right) =\boldsymbol{X},$ for every inverse system $%
\boldsymbol{X}$ in $\mathcal{C}$. If $\boldsymbol{f}\in pro$-$\mathcal{C}(%
\boldsymbol{X},\boldsymbol{Y})$ and if $\left( f,f_{\mu }\right) $ is a
representative of $\boldsymbol{f}$, put

$\underline{I}\left( \boldsymbol{f}\right) =\boldsymbol{f}^{J}=[(f,f_{\mu
}^{j})]\in (pro^{J}$-$\mathcal{C})(\boldsymbol{X},\boldsymbol{Y}),$

\noindent where $\left( f,f_{\mu }^{j}\right) $ is \emph{induced by} $\left(
f,f_{\mu }\right) $, i.e., for each $\mu \in M$, $f_{\mu }^{j}=f_{\mu }$ for
all $j\in J$. One straightforwardly verifies that $\underline{I}\left( 
\boldsymbol{f}\right) $ is well defined. Notice that every induced $J$%
-morphism is commutative. Therefore, $\underline{I}$ is a functor of $pro$-$%
\mathcal{C}$ to the subcategory $(pro^{J}$-$\mathcal{C})_{c}\subseteq
pro^{J} $-$\mathcal{C}$.

\begin{theorem}
\label{T1}The functor $\underline{I}:pro$-$\mathcal{C}\rightarrow (pro^{J}$-$%
\mathcal{C})_{c}\subseteq pro^{J}$-$\mathcal{C}$ is faithful.
\end{theorem}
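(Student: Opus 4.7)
The plan is to unpack what faithfulness means here and then observe that the $J$-equivalence, when restricted to morphisms whose $J$-indexed families are constant (i.e., induced from $pro$-$\mathcal{C}$), collapses to the usual $pro$-$\mathcal{C}$ equivalence. So the whole argument is essentially a definition chase, and there is no serious obstacle.

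More concretely, I would start by fixing two morphisms $\boldsymbol{f},\boldsymbol{f}'\in (pro\text{-}\mathcal{C})(\boldsymbol{X},\boldsymbol{Y})$ with $\underline{I}(\boldsymbol{f})=\underline{I}(\boldsymbol{f}')$ and choose representatives $(f,f_\mu)$ of $\boldsymbol{f}$ and $(f',f'_\mu)$ of $\boldsymbol{f}'$ in $inv\text{-}\mathcal{C}$. By construction of $\underline{I}$, the induced $J$-morphisms $(f,f^j_\mu)$ and $(f',f'^{\,j}_\mu)$ have $f^j_\mu=f_\mu$ and $f'^{\,j}_\mu=f'_\mu$ for \emph{every} $j\in J$. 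The hypothesis $\underline{I}(\boldsymbol{f})=\underline{I}(\boldsymbol{f}')$ is exactly $(f,f^j_\mu)\sim(f',f'^{\,j}_\mu)$ in the sense of Definition~\ref{D2}.

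Next I would apply Definition~\ref{D2} to the pair: given $\mu\in M$, there exist $\lambda\in\Lambda$ with $\lambda\geq f(\mu),f'(\mu)$ and $j\in J$ such that for every $j'\geq j$,
$$f^{j'}_\mu\, p_{f(\mu)\lambda} \;=\; f'^{\,j'}_\mu\, p_{f'(\mu)\lambda}.$$
Since both $J$-indexed families are constant in $j$, the variable $j'$ plays no role, and this identity reduces to
$$f_\mu\, p_{f(\mu)\lambda} \;=\; f'_\mu\, p_{f'(\mu)\lambda},$$
which is precisely the defining equivalence in $inv\text{-}\mathcal{C}$ showing $(f,f_\mu)\sim(f',f'_\mu)$. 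Hence $\boldsymbol{f}=\boldsymbol{f}'$ in $pro\text{-}\mathcal{C}$, so $\underline{I}$ is injective on each hom-set, i.e.\ faithful.

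The only point worth double-checking is that $\underline{I}$ is genuinely well defined and functorial (which the paragraph preceding the theorem already asserts), and that the image indeed lands in $(pro^J\text{-}\mathcal{C})_c$, which is immediate from the fact that a constant $J$-family trivially satisfies the commutativity condition in Definition~\ref{D1}. No cofiniteness hypothesis, no use of Lemma~\ref{L5} or Lemma~\ref{L6}, and no directedness of $J$ beyond what is already baked into the definitions is needed.
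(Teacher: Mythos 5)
Your proposal is correct and follows essentially the same definition-chasing argument as the paper: take constant-family representatives, apply Definition~2, and note that the $j'$-quantifier collapses (e.g.\ at $j'=j$) to the single equation $f_{\mu}p_{f(\mu)\lambda}=f'_{\mu}p_{f'(\mu)\lambda}$, which is exactly the $inv$-$\mathcal{C}$ equivalence. Nothing further is needed.
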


\begin{proof}
The functoriality follows straightforwardly. Let $\boldsymbol{f}^{J}=%
\underline{I}\left( \boldsymbol{f}\right) =\underline{I}(\boldsymbol{f}%
^{\prime })=\boldsymbol{f}^{\prime J}$. Let $\left( f,f_{\mu }\right) $ and $%
\left( f^{\prime },f_{\mu }^{\prime }\right) $ be any representatives of $%
\boldsymbol{f}$ and $\boldsymbol{f}^{\prime }$ respectively. By definition
of the functor $\underline{I}$, $\boldsymbol{f}^{J}=[(f,f_{\mu }^{j}=f_{\mu
})]$ and $\boldsymbol{f}^{\prime J}=[(f^{\prime },f_{\mu }^{\prime j}=f_{\mu
}^{\prime })]$. Since $(f,f_{\mu }^{j})\sim (f^{\prime },f_{\mu }^{\prime
j}) $, for every $\mu \in M$, there exist a $\lambda \geq f(\mu ),f^{\prime
}(\mu )$ and a $j$ such that, for every $j^{\prime }\geq j$,

$f_{\mu }^{j^{\prime }}p_{f(\mu )\lambda }=f_{\mu }^{\prime j^{\prime
}}p_{f^{\prime }(\mu )\lambda }$.

\noindent This means that

$f_{\mu }p_{f(\mu )\lambda }=f_{\mu }^{\prime }p_{f^{\prime }(\mu )\lambda }$

\noindent holds. Therefore, $\left( f,f_{\mu }\right) \sim \left( f^{\prime
},f_{\mu }^{\prime }\right) $, i.e., $\boldsymbol{f}=\boldsymbol{f}^{\prime
} $.
\end{proof}

\begin{remark}
\label{R2}The functor $\underline{I}$ is not full. For instance, let us
consider the restriction $(pro$-$\mathcal{C})\left( \boldsymbol{X},%
\boldsymbol{T}\right) \rightarrow (pro^{J}$-$\mathcal{C})_{c}\left( 
\boldsymbol{X},\boldsymbol{T}\right) $, where $\boldsymbol{T}=(T_{0}\equiv
T) $ is a rudimentary inverse system. Let $\boldsymbol{f}\in (pro$-$\mathcal{%
C})\left( \boldsymbol{X},\boldsymbol{T}\right) $. Then every representative $%
(f,f_{0})$ of $\boldsymbol{f}$ is uniquely determined by a $\lambda _{0}\in
\Lambda $ ($f(0)=\lambda _{0}$) and by a morphism $f_{0}\equiv f_{\lambda
_{0}}\in \mathcal{C}(X_{\lambda },T)$. However, it is not the case for an $%
\boldsymbol{f}^{J}\in (pro^{J}$-$\mathcal{C})_{c}\left( \boldsymbol{X},%
\boldsymbol{T}\right) $. Indeed, if $(f,f_{0}^{j})$ is a representative of $%
\boldsymbol{f}^{J}$, then $f(0)=\lambda _{0}\in \Lambda $, while $%
(f_{0}^{j}\equiv f_{\lambda _{0}}^{j})_{j\in J}$ is a family of morphisms $%
f_{\lambda _{0}}^{j}\in \mathcal{C}(X_{\lambda _{0}},T)$. Notice that $%
(f,f_{0}^{j})\sim (f^{\prime },f_{0}^{\prime j})$ if and only if

$(\exists \lambda \geqslant \lambda _{0},\lambda _{0}^{\prime })\left(
\exists j\right) \left( \forall j^{\prime }\geqslant j\right) $ $%
f_{0}^{j^{\prime }}p_{\lambda _{0}\lambda }=f_{0}^{\prime j^{\prime
}}p_{\lambda _{0}^{\prime }\lambda }$.
\end{remark}

By the well known \textquotedblleft Marde\v{s}i\'{c} trick\textquotedblright
, every inverse system $\boldsymbol{X}$ in $\mathcal{C}$ is isomorphic (in $%
pro$-$\mathcal{C}$) to a cofinite inverse system $\boldsymbol{X}^{\prime }$.
If $\boldsymbol{f}:\boldsymbol{X}\rightarrow \boldsymbol{X}^{\prime }$ is an
isomorphism of $pro$-$\mathcal{C},$ then $\underline{I}\left( \boldsymbol{f}%
\right) :\boldsymbol{X}\rightarrow \boldsymbol{X}^{\prime }$ is an
isomorphism of $pro^{J}$-$\mathcal{C}$. Therefore, the next corollary holds.

\begin{corollary}
\label{C1}Every inverse system $\boldsymbol{X}$ in $\mathcal{C}$ is
isomorphic in $pro^{J}$-$\mathcal{C}$ to a cofinite inverse system $%
\boldsymbol{X}^{\prime }$.
\end{corollary}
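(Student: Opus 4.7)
The statement is essentially a direct transport result: it asks us to upgrade the classical cofinite replacement in $pro$-$\mathcal{C}$ to the enriched category $pro^{J}$-$\mathcal{C}$. The plan is to invoke the classical Mardešić cofinite expansion (or ``cofinite trick''), apply the functor $\underline{I} \equiv \underline{I}_{\mathcal{C}}^{J}$ constructed just before Theorem 1, and observe that functors preserve isomorphisms.

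Concretely, I would proceed as follows. First, given any inverse system $\boldsymbol{X}$ in $\mathcal{C}$, quote the standard Mardešić result (stated explicitly in the paragraph preceding the corollary, and proved e.g.\ in Mardešić--Segal [24]): there exists a cofinite inverse system $\boldsymbol{X}^{\prime}$ in $\mathcal{C}$ together with an isomorphism $\boldsymbol{f}: \boldsymbol{X} \to \boldsymbol{X}^{\prime}$ in $pro$-$\mathcal{C}$. Second, apply the functor $\underline{I}: pro\text{-}\mathcal{C} \to (pro^{J}\text{-}\mathcal{C})_{c} \subseteq pro^{J}\text{-}\mathcal{C}$ constructed above Theorem 1, which sends objects to themselves and sends $\boldsymbol{f} = [(f, f_{\mu})]$ to the class $[(f, f_{\mu}^{j})]$ with constant family $f_{\mu}^{j} = f_{\mu}$. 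Then $\underline{I}(\boldsymbol{X}) = \boldsymbol{X}$ and $\underline{I}(\boldsymbol{X}^{\prime}) = \boldsymbol{X}^{\prime}$, and $\underline{I}(\boldsymbol{f}): \boldsymbol{X} \to \boldsymbol{X}^{\prime}$ is a morphism of $pro^{J}$-$\mathcal{C}$.

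Third, use the elementary categorical fact that any functor carries isomorphisms to isomorphisms: if $\boldsymbol{g}: \boldsymbol{X}^{\prime} \to \boldsymbol{X}$ is the inverse of $\boldsymbol{f}$ in $pro$-$\mathcal{C}$, then $\underline{I}(\boldsymbol{g}) \underline{I}(\boldsymbol{f}) = \underline{I}(\boldsymbol{g} \boldsymbol{f}) = \underline{I}(\boldsymbol{1}_{\boldsymbol{X}}) = \boldsymbol{1}_{\boldsymbol{X}}^{J}$, and symmetrically on the other side, so $\underline{I}(\boldsymbol{f})$ is the desired isomorphism in $pro^{J}$-$\mathcal{C}$.

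There is really no obstacle here: once Theorem 1 provides the functor $\underline{I}$ and the Mardešić trick provides the $pro$-$\mathcal{C}$ isomorphism, the rest is formal. The only thing worth double-checking is that the identity $J$-morphism $(1_{\Lambda}, 1_{X_{\lambda}}^{j})$ is indeed $\underline{I}$ of the identity of $pro$-$\mathcal{C}$, which is immediate from the defining formula $f_{\mu}^{j} = f_{\mu}$ of the induced $J$-morphism applied to the identity.
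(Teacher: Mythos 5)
Your proposal is correct and follows exactly the paper's argument: the paragraph preceding the corollary likewise invokes the Marde\v{s}i\'{c} trick to get an isomorphism $\boldsymbol{f}:\boldsymbol{X}\rightarrow \boldsymbol{X}^{\prime }$ in $pro$-$\mathcal{C}$ and then applies the functor $\underline{I}$, which preserves isomorphisms. Your extra check that $\underline{I}$ sends identities to the identity $J$-morphism is a harmless elaboration of the same reasoning.
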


A morphism $\boldsymbol{f}:\boldsymbol{X}\rightarrow \boldsymbol{Y}$ of $%
pro^{J}$-$\mathcal{C}$ does not admit, in general, a level representative.
However, the following \textquotedblleft reindexing
theorem\textquotedblright\ will help to overcome some technical difficulties
concerning this fact.

\begin{theorem}
\label{T2}Let $\boldsymbol{f}\in (pro^{J}$-$\mathcal{\mathcal{C}})(%
\boldsymbol{X},\boldsymbol{Y})$. Then there exist inverse systems $%
\boldsymbol{X}^{\prime }$ and $\boldsymbol{Y}^{\prime }in$ $\mathcal{C}$
having the same cofinite index set $(N,\leq )$, there exists a morphism $%
\boldsymbol{f}^{\prime }:\boldsymbol{X}^{\prime }\rightarrow \boldsymbol{Y}%
^{\prime }$ having a level representative $(1_{N},f_{\nu }^{\prime j})$ and
there exist isomorphisms $\boldsymbol{i}:\boldsymbol{X}\rightarrow 
\boldsymbol{X}^{\prime }$ and $\boldsymbol{j}:\boldsymbol{Y}\rightarrow 
\boldsymbol{Y}^{\prime }$ of $pro^{J}$-$\mathcal{C}$, such that the
following diagram in $pro^{J}$-$\mathcal{C}$ commutes

$%
\begin{array}{ccc}
\boldsymbol{X} & \overset{\boldsymbol{f}}{\rightarrow } & \boldsymbol{Y} \\ 
\boldsymbol{i}\downarrow &  & \downarrow \boldsymbol{j} \\ 
\boldsymbol{X}^{\prime } & \overset{\boldsymbol{f}^{\prime }}{\rightarrow }
& \boldsymbol{Y}^{\prime }%
\end{array}%
$.
\end{theorem}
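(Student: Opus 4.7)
The plan is to mimic the classical Marde\v{s}i\'{c}--Segal reindexing theorem for $pro$-$\mathcal{C}$, enhanced so that the extra parameter $j\in J$ is carried through uniformly. By Corollary \ref{C1} I first choose $pro^{J}$-$\mathcal{C}$ isomorphisms $\boldsymbol{i}_{0}\colon \boldsymbol{X}\to\tilde{\boldsymbol{X}}$ and $\boldsymbol{j}_{0}\colon \boldsymbol{Y}\to\tilde{\boldsymbol{Y}}$ with $\tilde{\boldsymbol{X}}=(X_{\lambda},p_{\lambda\lambda^{\prime}},\Lambda)$ and $\tilde{\boldsymbol{Y}}=(Y_{\mu},q_{\mu\mu^{\prime}},M)$ cofinite, transport $\boldsymbol{f}$ to $\tilde{\boldsymbol{f}}=\boldsymbol{j}_{0}\boldsymbol{f}\boldsymbol{i}_{0}^{-1}$, and invoke Lemma \ref{L6} to obtain a simple representative $(f,f_{\mu}^{j})$ of $\tilde{\boldsymbol{f}}$; thus $f$ is increasing and for every pair $\mu\le\mu^{\prime}$ in $M$ there exists $j_{0}=j_{0}(\mu,\mu^{\prime})\in J$ so that $f_{\mu}^{j^{\prime}}p_{f(\mu)f(\mu^{\prime})}=q_{\mu\mu^{\prime}}f_{\mu^{\prime}}^{j^{\prime}}$ for all $j^{\prime}\ge j_{0}$.

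Next I form the common index set $N=\{(\lambda,\mu)\in\Lambda\times M:\lambda\ge f(\mu)\}$ with componentwise order, which is directed and cofinite. Put $\boldsymbol{X}^{\prime}=(X_{\lambda},p_{\lambda\lambda^{\prime}},N)$ and $\boldsymbol{Y}^{\prime}=(Y_{\mu},q_{\mu\mu^{\prime}},N)$ via the first- and second-coordinate projections, and define the candidate level $J$-morphism $(1_{N},f_{(\lambda,\mu)}^{\prime j})\colon \boldsymbol{X}^{\prime}\to\boldsymbol{Y}^{\prime}$ by
\[ f_{(\lambda,\mu)}^{\prime j}:=f_{\mu}^{j}\,p_{f(\mu)\lambda}. \]
For a pair $(\lambda,\mu)\le(\lambda^{\prime},\mu^{\prime})$ in $N$, post-composing the displayed simple-morphism equality with $p_{f(\mu^{\prime})\lambda^{\prime}}$, which is legitimate because $\lambda^{\prime}\ge f(\mu^{\prime})$, yields $f_{(\lambda,\mu)}^{\prime j^{\prime}}p_{\lambda\lambda^{\prime}}=q_{\mu\mu^{\prime}}f_{(\lambda^{\prime},\mu^{\prime})}^{\prime j^{\prime}}$ for all $j^{\prime}\ge j_{0}$, so Definition \ref{D1} is satisfied.

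Finally, let $\boldsymbol{i}^{\prime}\colon \tilde{\boldsymbol{X}}\to\boldsymbol{X}^{\prime}$ and $\boldsymbol{j}^{\prime}\colon \tilde{\boldsymbol{Y}}\to\boldsymbol{Y}^{\prime}$ be the images under $\underline{I}$ of the $pro$-$\mathcal{C}$ morphisms with index functions $(\lambda,\mu)\mapsto\lambda$, $(\lambda,\mu)\mapsto\mu$ and identity components. Since both coordinate projections $N\to\Lambda$, $N\to M$ have cofinal images, $\boldsymbol{i}^{\prime}$ and $\boldsymbol{j}^{\prime}$ are isomorphisms in $pro$-$\mathcal{C}$, and functoriality of $\underline{I}$ transfers this to $pro^{J}$-$\mathcal{C}$. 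Set $\boldsymbol{i}=\boldsymbol{i}^{\prime}\boldsymbol{i}_{0}$ and $\boldsymbol{j}=\boldsymbol{j}^{\prime}\boldsymbol{j}_{0}$; by Lemma \ref{L1}, both $\boldsymbol{j}\boldsymbol{f}$ and $\boldsymbol{f}^{\prime}\boldsymbol{i}$ admit representatives on $N$ whose $(\lambda,\mu)$-component is $f_{\mu}^{j}p_{f(\mu)\lambda}$ up to trivial telescoping, so $\sim$ of Definition \ref{D2} is witnessed by $\lambda^{\ast}=\lambda$ and the diagram commutes. The main technical obstacle is the coherence of $\boldsymbol{f}^{\prime}$ just above: without a simple representative, the auxiliary $\lambda$ supplied by Definition \ref{D1} could exceed the prescribed $\lambda^{\prime}\in N$ and the post-composition with $p_{f(\mu^{\prime})\lambda^{\prime}}$ would not deliver the required equality. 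This is precisely where Lemma \ref{L6} is indispensable: simplicity pins the relevant $\lambda$ to $f(\mu^{\prime})$, hence to any $\lambda^{\prime}\ge f(\mu^{\prime})$ available in $N$.
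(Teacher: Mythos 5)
Your proposal is correct and follows essentially the same route as the paper: reduce to cofinite systems via Corollary \ref{C1}, take a simple representative via Lemma \ref{L6}, reindex over $N=\{(\lambda,\mu):\lambda\geq f(\mu)\}$ with $f^{\prime j}_{(\lambda,\mu)}=f_{\mu}^{j}p_{f(\mu)\lambda}$, and use the projection-induced morphisms (images under $\underline{I}$ of $pro$-$\mathcal{C}$ isomorphisms) to build $\boldsymbol{i}$ and $\boldsymbol{j}$. The verifications you give (the $J$-morphism condition from simplicity, and the commutativity witnessed by $\lambda^{\ast}=\lambda$) match the paper's argument in substance.
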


\begin{proof}
Let $\boldsymbol{f}\in (pro^{J}$-$\mathcal{\mathcal{C}})(\boldsymbol{X},%
\boldsymbol{Y})$. By Corollary 1, there exist cofinite inverse systems $%
\widetilde{\boldsymbol{X}}=(\widetilde{X}_{\alpha },\widetilde{p}_{\alpha
\alpha ^{\prime }},A)$ and $\widetilde{\boldsymbol{Y}}=(\widetilde{Y}_{\beta
},\widetilde{q}_{\beta \beta ^{\prime }},B)$, and there exist isomorphisms $%
\boldsymbol{u}:\boldsymbol{X}\rightarrow \widetilde{\boldsymbol{X}}$ and $%
\boldsymbol{v}:\boldsymbol{Y}\rightarrow \widetilde{\boldsymbol{Y}}$ of $%
pro^{J}$-$\mathcal{C}$. Let $\widetilde{\boldsymbol{f}}=\boldsymbol{vfu}%
^{-1}:\widetilde{\boldsymbol{X}}\rightarrow \widetilde{\boldsymbol{Y}}$. By
Lemma 6, there exists a simple representative $(w,w_{\beta }^{j})$ of $%
\widetilde{\boldsymbol{f}}$. Let

$N=\{\nu \equiv (\alpha ,\beta )\mid \alpha \in A,\beta \in B,w(\beta )\leq
\alpha \}\subseteq A\times B$,

\noindent and define $(N,\leq )$ coordinatewise., i.e., $\nu =\left( \alpha
,\beta \right) \leq \left( \alpha ^{\prime },\beta ^{\prime }\right) =\nu
^{\prime }$ if and only if $\alpha \leq \alpha ^{\prime }$ in $A$ and $\beta
\leq \beta ^{\prime }$ in $B$. Clearly, $N$ is preordered. Let any $\nu
=\left( \alpha ,\beta \right) ,\nu ^{\prime }=\left( \alpha ^{\prime },\beta
^{\prime }\right) \in N$ be given. Since $B$ is directed, there exists a $%
\beta _{0}\geq \beta ,\beta ^{\prime }$. Since $A$ is directed, there exists
an $\alpha _{0}\geq \alpha ,\alpha ^{\prime },w(\beta _{0})$. Then $(\alpha
_{0},\beta _{0})\equiv \nu _{0}\in N$ and $\nu _{0}\geq \nu ,\nu ^{\prime }$%
. Thus, $N$ is directed. Further, since $A$ and $B$ are cofinite and since $%
N\subseteq A\times B$ is (pre)ordered coordinatewise, the set $N$ is
cofinite too. Let us now construct desired inverse systems $\boldsymbol{X}%
^{\prime }=\left( X_{\nu }^{\prime },p_{\nu \nu ^{\prime }}^{\prime
},N\right) $ and $\boldsymbol{Y}^{\prime }=\left( Y_{\nu }^{\prime },q_{\nu
\nu ^{\prime }}^{\prime },N\right) $. Given a $\nu =(\alpha ,\beta )\in N$,
put $X_{\nu }^{\prime }=\tilde{X}_{\alpha }$ and $Y_{\nu }^{\prime }=\tilde{Y%
}_{\beta }$. For every related pair $\nu =\left( \alpha ,\beta \right) \leq
\left( \alpha ^{\prime },\beta ^{\prime }\right) =\nu ^{\prime }$ in $N$,
put $p_{\nu \nu ^{\prime }}^{\prime }=\widetilde{p}_{\beta \beta ^{\prime }}$
and $q_{\nu \nu ^{\prime }}^{\prime }=\widetilde{q}_{\gamma \gamma ^{\prime
}}$. Now, for each $\nu =(\alpha ,\beta )\in N$ and every $j\in J$, put $%
f_{\nu }^{\prime j}=w_{\beta }^{j}\widetilde{p}_{w(\beta )j}:X_{\nu
}^{\prime }\rightarrow Y_{\nu }^{\prime }$. Then $(1_{N},f_{\nu }^{\prime
j}):\boldsymbol{X}^{\prime }\rightarrow \boldsymbol{Y}^{\prime }$ is a
simple $J$-morphism. Indeed, if $\nu \leq \nu ^{\prime }$, then $\beta \leq
\beta ^{\prime }$, Since $(w,w_{\beta }^{j})$ is simple, there exists a $%
j\in J$ such that, for every $j^{\prime }\geq j$,

$w_{\beta }^{j^{\prime }}\widetilde{p}_{w(\beta )w(\beta ^{\prime })}=%
\widetilde{q}_{\beta \beta ^{\prime }}w_{\beta ^{\prime }}^{j^{\prime }}$.

\noindent Since $\alpha \geq w(\beta )$, $\alpha ^{\prime }\geq w(\beta
^{\prime })$, $w(\beta ^{\prime })\geq w(\beta )$ and $\alpha ^{\prime }\geq
\alpha $, it implies that

$f_{\nu }^{\prime j^{\prime }}p_{\nu \nu ^{\prime }}^{\prime }=w_{\beta
}^{j^{\prime }}\widetilde{p}_{w(\beta )\alpha }\widetilde{p}_{\alpha \alpha
^{\prime }}=w_{\beta }^{j^{\prime }}\widetilde{p}_{w(\beta )w(\beta ^{\prime
})}\widetilde{p}_{w(\beta ^{\prime })\alpha ^{\prime }}=\widetilde{q}_{\beta
\beta ^{\prime }}w_{\beta ^{\prime }}^{j^{\prime }}\widetilde{p}_{w(\beta
^{\prime })\alpha ^{\prime }}=q_{\nu \nu ^{\prime }}^{\prime }f_{\nu
^{\prime }}^{\prime j^{\prime }}$.

\noindent Let $s:N\rightarrow \Lambda $ be defined by putting $s\left( \nu
\right) =\alpha $, where $\nu =(\alpha ,\beta )$, and let, for each $\nu \in
N$ and every $j\in J$, $s_{\nu }^{j}:\widetilde{X}_{\alpha }\rightarrow
X_{\nu }^{\prime }=\widetilde{X}_{\alpha }$ be the identity $1_{\widetilde{X}%
_{\alpha }}$ of $\mathcal{C}$. In the same way, let $t:N\rightarrow M$ be
defined by putting $t\left( \nu \right) =\beta $, and let, for each $\nu \in
N$ and every $j$, $t_{\nu }^{j}:\widetilde{Y}_{\beta }\rightarrow Y_{\nu
}^{\prime }=\widetilde{Y}_{\beta }$ be the identity $1_{\widetilde{Y}_{\beta
}}$ of $\mathcal{C}$. It is readily seen that $\boldsymbol{s}=[(s,s_{\nu
}^{j})]:\widetilde{\boldsymbol{X}}\rightarrow \boldsymbol{X}^{\prime }$ and $%
\boldsymbol{t}=[(t,t_{\nu }^{j})]:\widetilde{\boldsymbol{Y}}\rightarrow 
\boldsymbol{Y}^{\prime }$ are simple commutative morphisms of $pro^{J}$-$%
\mathcal{C}$. Even more, they are induced by morphisms $(s,s_{\nu }=1_{%
\widetilde{X}_{\alpha }})$ and $(t,t_{\nu }=1_{\widetilde{Y}_{\beta }})$ of $%
inv$-$\mathcal{C}$ respectively. Notice that, in $pro$-$\mathcal{C}$, $%
[(s,s_{\nu })]:\widetilde{\boldsymbol{X}}\rightarrow \boldsymbol{X}^{\prime
} $ and $[(t,t_{\nu })]:\widetilde{\boldsymbol{Y}}\rightarrow \boldsymbol{Y}%
^{\prime }$ are isomorphisms. Since $\boldsymbol{s}=\underline{I}([(s,s_{\nu
})])$ and $\boldsymbol{t}=\underline{I}([(t,t_{\nu })])$, we infer that $%
\boldsymbol{s}$ and $\boldsymbol{t}$ are isomorphisms of $pro^{J}$-$\mathcal{%
C}$. Moreover, for every $\nu =(\alpha ,\beta )\in N$ and every $j\in J$,

$t_{\nu }^{j}w_{t\left( \nu \right) }^{j}\widetilde{p}_{wt\left( \nu \right)
\alpha }=w_{\beta }^{j}\widetilde{p}_{w\left( \beta \right) \alpha }=f_{\nu
}^{\prime j}=f_{\nu }^{\prime j}s_{\nu }^{j}$,

\noindent which implies that

$(t,t_{\nu }^{j})(w,w_{\beta }^{j})\sim (1_{N},f_{\nu }^{\prime j})(s,s_{\nu
}^{j})$.

\noindent Therefore, $\boldsymbol{t}\widetilde{\boldsymbol{f}}=\boldsymbol{f}%
^{\prime }\boldsymbol{s}$. Finally, put $\boldsymbol{i}\equiv \boldsymbol{su}%
:\boldsymbol{X}\rightarrow \boldsymbol{X}^{\prime }$ and $\boldsymbol{j}%
\equiv \boldsymbol{tv}:\boldsymbol{Y}\rightarrow \boldsymbol{Y}^{\prime }$,
which are isomorphisms of $pro^{J}$-$\mathcal{C}$. Then

$\boldsymbol{jf}=\boldsymbol{tvf}=\boldsymbol{t}\widetilde{\boldsymbol{f}}%
\boldsymbol{u}=\boldsymbol{f}^{\prime }\boldsymbol{su}=\boldsymbol{f}%
^{\prime }\boldsymbol{i}$,

\noindent that completes the proof of the theorem.
\end{proof}

\begin{theorem}
\label{T3}Let $\mathcal{C}$ be a category. Then

(i) $pro$-$\mathcal{C}=pro^{(1)}$-$\mathcal{C}$;

(ii) $pro^{\ast }$-$\mathcal{C}=pro^{\mathbb{N}}$-$\mathcal{C}$;

(iii) If $J$ is a directed partially ordered set having $\max J$, then $%
pro^{J}$-$\mathcal{C}\cong pro$-$\mathcal{C}$;

(iv) If $J$ and $K$ are finite directed partially ordered sets, then one may
identify $pro^{J}$-$\mathcal{C}\cong pro^{K}$-$\mathcal{C}\cong pro$-$%
\mathcal{C}$.

(v) If there exists $\max J$, then, for every $L,$ there exists the
canonical inclusion functor $\underline{I}:pro^{J}$-$\mathcal{C}\rightarrow
pro^{L}$-$\mathcal{C}$ keeping the objects fixed.
\end{theorem}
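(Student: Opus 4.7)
The plan is to dispatch (i) and (ii) by definition unwinding, do the real work in (iii), and then derive (iv) and (v) as formal consequences of (iii) together with the functor $\underline{I}_{\mathcal{C}}^{L}$ defined just before Theorem 1. For (i), taking $J=\{1\}$ in Definition 1 collapses the double quantifier ``$\exists j\in J,\forall j'\geq j$'' to the single evaluation at $j'=1$, and the family $(f_{\mu}^{j})$ is nothing but a single morphism $f_{\mu}:=f_{\mu}^{1}$; the same collapse occurs in Definition 2. Reading off the conditions one recovers verbatim the notions of an $inv$-$\mathcal{C}$-morphism and its standard equivalence, so $pro^{\{1\}}$-$\mathcal{C}=pro$-$\mathcal{C}$. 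For (ii), Definitions 1 and 2 with $J=\mathbb{N}$ (ordered as usual) reproduce letter-for-letter the notions of $\ast$-morphism and of $\sim$ recalled in Section 2, whence $pro^{\mathbb{N}}$-$\mathcal{C}=pro^{\ast}$-$\mathcal{C}$.

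For (iii), write $j_{\ast}:=\max J$. I propose to prove that the functor $\underline{I}=\underline{I}_{\mathcal{C}}^{J}:pro$-$\mathcal{C}\to pro^{J}$-$\mathcal{C}$ of Theorem 1 is an isomorphism of categories. It is the identity on objects and, by Theorem 1, faithful, so only fullness needs attention. Given $\boldsymbol{f}^{J}=[(f,f_{\mu}^{j})]\in(pro^{J}\text{-}\mathcal{C})(\boldsymbol{X},\boldsymbol{Y})$, I set $f_{\mu}:=f_{\mu}^{j_{\ast}}$. The $J$-morphism condition applied to each pair $\mu\leq\mu'$ supplies $\lambda\in\Lambda$ and $j\in J$ with the required equality for all $j'\geq j$; evaluating at $j'=j_{\ast}$ (valid since $j_{\ast}\geq j$) yields $f_{\mu}p_{f(\mu)\lambda}=q_{\mu\mu'}f_{\mu'}p_{f(\mu')\lambda}$, so $(f,f_{\mu}):\boldsymbol{X}\to\boldsymbol{Y}$ is a genuine $inv$-$\mathcal{C}$-morphism. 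To see $\underline{I}([(f,f_{\mu})])=\boldsymbol{f}^{J}$, let $g_{\mu}^{j}:=f_{\mu}^{j_{\ast}}$ for every $j\in J$, so that $\underline{I}([(f,f_{\mu})])=[(f,g_{\mu}^{j})]$, and verify $(f,g_{\mu}^{j})\sim(f,f_{\mu}^{j})$ via Definition 2 by taking, for each $\mu$, $\lambda=f(\mu)$ and $j=j_{\ast}$: the quantifier ``$\forall j'\geq j_{\ast}$'' then forces $j'=j_{\ast}$, reducing the required equality to a tautology. Well-definedness of the inverse assignment $[(f,f_{\mu}^{j})]\mapsto[(f,f_{\mu}^{j_{\ast}})]$ (on $\sim$-classes in $inv^{J}$-$\mathcal{C}$) follows by the same evaluation trick at $j'=j_{\ast}$.

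Parts (iv) and (v) are then formal. Every finite directed poset admits a maximum, obtained by iteratively applying the pairwise upper-bound property to its finite carrier, so (iii) yields $pro^{J}$-$\mathcal{C}\cong pro$-$\mathcal{C}\cong pro^{K}$-$\mathcal{C}$, proving (iv). For (v), compose the isomorphism $pro^{J}$-$\mathcal{C}\cong pro$-$\mathcal{C}$ from (iii) with the canonical functor $\underline{I}_{\mathcal{C}}^{L}:pro$-$\mathcal{C}\to pro^{L}$-$\mathcal{C}$ from the paragraph preceding Theorem 1; the composite fixes objects and is the desired inclusion. The only delicate point in the whole argument is the quantifier handling in (iii): the recipe is to choose $j=j_{\ast}$ on the equivalence side in order to trivialize ``$\forall j'\geq j$'', and to evaluate at $j'=j_{\ast}$ on the morphism side in order to extract a standard $inv$-$\mathcal{C}$ equation. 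Beyond this one observation, the entire theorem is a definition chase.
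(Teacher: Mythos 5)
Your proposal is correct and follows essentially the same route as the paper: the key step in (iii) is restriction to $j^{\ast}=\max J$ together with the observation that $j'\geq j^{\ast}$ forces $j'=j^{\ast}$, which is exactly the paper's "fully and uniquely determined by $(f,f_{\mu}^{j^{\ast}})$" argument, merely repackaged as fullness (plus the faithfulness of Theorem 1) of the canonical functor $\underline{I}$. Parts (i), (ii), (iv) (finite directed posets have a maximum) and (v) (compose with $\underline{I}_{\mathcal{C}}^{L}$) are handled as in the paper.
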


\begin{proof}
Statements (i) and (ii) are obviously true by the definition of $pro^{J}$-$%
\mathcal{C}$. In order to prove (iii), it suffices to show that every

$\boldsymbol{f}=[(f,f_{\mu }^{j})]:\boldsymbol{X}=\left( X_{\lambda
},p_{\lambda \lambda ^{\prime }},\Lambda \right) \rightarrow (Y_{\mu
},q_{\mu \mu ^{\prime }},M)=\boldsymbol{Y}$

\noindent of $pro^{J}$-$\mathcal{C}$ is fully and uniquely determined by

$(f,f_{\mu }^{j^{\ast }})]:\boldsymbol{X}\rightarrow \boldsymbol{Y}$, $%
j^{\ast }\equiv \max J$,

\noindent which belongs to $(inv$-$\mathcal{C})(\boldsymbol{X},\boldsymbol{Y}%
)$. Indeed, since $\max J\equiv j^{\ast }$ exists. Definition 1 implies that

$(\forall \mu \leq \mu ^{\prime })(\exists \lambda \geq f(\mu )\lambda
,f(\mu ^{\prime })$

$f_{\mu }^{j^{\ast }}p_{f(\mu )\lambda }=q_{\mu \mu ^{\prime }}f_{\mu
^{\prime }}^{j^{\ast }}p_{f(\mu ^{\prime })\lambda }$.

\noindent This means that

$(f,f_{\mu }^{j^{\ast }}):\boldsymbol{X}\rightarrow \boldsymbol{Y}$

\noindent is a morphism of $inv$-$\mathcal{C}$. Further, if

$(f^{\prime },f_{\mu }^{\prime j}):\boldsymbol{X}\rightarrow \boldsymbol{Y}$

\noindent is an arbitrary representative of $\boldsymbol{f}$, then

$(f^{\prime },f_{\mu }^{\prime j^{\ast }})]:\boldsymbol{X}\rightarrow 
\boldsymbol{Y}$

\noindent belongs to $(inv$-$\mathcal{C})(\boldsymbol{X},\boldsymbol{Y})$ as
well and, moreover, $(f^{\prime },f_{\mu }^{\prime j^{\ast }})\sim (f,f_{\mu
}^{j^{\ast }})$ in $inv$-$\mathcal{C}$ is equivalent to $(f^{\prime },f_{\mu
}^{\prime j})\sim (f,f_{\mu }^{j})$ in $inv^{J}$-$\mathcal{C}$. The
conclusion follows. Statement (iv) in an immediate consequence of (iii)
because every such finite set must have a unique maximal element. Statement
(v) follows by (iv) because every $\boldsymbol{f}=[(f,f_{\mu }^{j})]\in
(pro^{J}$-$\mathcal{C})(\boldsymbol{X},\boldsymbol{Y})$ is determined by $%
(f,f_{\mu }^{\max J})\in (inv$-$\mathcal{C})(\boldsymbol{X},\boldsymbol{Y})$%
, which induces a unique $\boldsymbol{f}^{\prime }=[(f^{\prime }=f,f_{\mu
}^{\prime l}=f_{\mu }^{\max J})]\in (pro^{L}$-$\mathcal{C})_{c}(\boldsymbol{X%
},\boldsymbol{Y})\subseteq (pro^{L}$-$\mathcal{C})(\boldsymbol{X},%
\boldsymbol{Y})$.
\end{proof}

According to Theorem 3, only a $(J,\leq )$ having no maximal element is
interesting because the existence of $\max J$ turns us back to the
\textquotedblleft trivial\textquotedblright\ case of $pro$-$\mathcal{C}$. In
order to relate $pro^{J}$-$\mathcal{C}$ to a $pro^{K}$-$\mathcal{C}$ in a
\textquotedblleft nontrivial\textquotedblright\ case, we have established
the following fact only.

\begin{theorem}
\label{T4}Let $\mathcal{C}$ be a category, let $J$ be a well ordered set and
let $K$ be a directed partially ordered set, both without maximal elements.
If there exists an increasing function $\phi :J\rightarrow K$ such that $%
\phi \lbrack J]$ is cofinal in $K$, then there exists a functor

$\underline{T}:pro^{J}$-$\mathcal{C}\rightarrow pro^{K}$-$\mathcal{C}$

\noindent keeping the objects fixed, and $T$ does not depend on $\phi $.
Furthermore, for every pair $\boldsymbol{X}$, $\boldsymbol{Y}$ of inverse
systems in $\mathcal{C}$, the equivalence

$(\boldsymbol{X}\cong \boldsymbol{Y}$ in $pro^{J}$-$\mathcal{C}%
)\Leftrightarrow (\boldsymbol{X}\cong \boldsymbol{Y}$ in $pro^{K}$-$\mathcal{%
C})$

\noindent holds true.
\end{theorem}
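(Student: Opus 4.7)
The strategy is to build $\underline{T}$ by reindexing $J$-morphisms into $K$-morphisms through a canonical choice function $\psi:K\to J$ derived from $\phi$ and the well-ordering of $J$, verify functoriality by cofinality bookkeeping, and then deduce the isomorphism equivalence by also producing a companion functor $pro^{K}$-$\mathcal{C}\to pro^{J}$-$\mathcal{C}$ from $\phi$ directly. Since $\phi[J]$ is cofinal in $K$ and $J$ is well-ordered, I set $\psi(k):=\min\{j\in J:\phi(j)\geq k\}$; the crucial cofinality property of $\psi$ is: given $j_{0}\in J$, pick (using that $K$ has no maximum) some $k_{0}\in K$ with $k_{0}>\phi(j_{0})$; then $\phi(\psi(k))\geq k>\phi(j_{0})$ for $k\geq k_{0}$, which by monotonicity of $\phi$ and totality of the well-order on $J$ forces $\psi(k)>j_{0}$. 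Put $\underline{T}(\boldsymbol{X})=\boldsymbol{X}$ on objects, and on morphisms set $\underline{T}[(f,f_{\mu}^{j})]=[(f,f_{\mu}^{\psi(k)})]$.

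I would then verify functoriality in four short steps. (i) $(f,f_{\mu}^{\psi(k)})$ satisfies the $K$-morphism condition: given $\mu\leq\mu'$, extract $\lambda$ and $j_{0}$ from the $J$-morphism clause and apply cofinality of $\psi$ to produce $k_{0}$ with $\psi(k)\geq j_{0}$ for $k\geq k_{0}$. (ii) The equivalence $\sim$ is preserved: the same cofinality trick upgrades the $\exists\lambda,\exists j$ clause of Definition 2 into an $\exists\lambda,\exists k$ clause. (iii) Composition is preserved coordinatewise via the formula $g_{\nu}^{\psi(k)}f_{g(\nu)}^{\psi(k)}$. (iv) Identities map to identities by construction. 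Lemmas 3 and 4 then guarantee that $\underline{T}$ descends to a well-defined functor on $pro^{J}$-$\mathcal{C}$. For $\phi$-independence, given any second increasing cofinal $\phi':J\to K$ yielding $\psi'$, I would argue that $(f,f_{\mu}^{\psi(k)})$ and $(f,f_{\mu}^{\psi'(k)})$ determine the same class in $pro^{K}$-$\mathcal{C}$ by combining the cofinality of $\psi,\psi'$ with the $J$-morphism compatibility to show that both representatives record the same eventual-in-$k$ data.

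For the isomorphism equivalence, $(\Rightarrow)$ is immediate since $\underline{T}$ is a functor fixing objects. For $(\Leftarrow)$ I would construct a companion functor $\underline{S}:pro^{K}$-$\mathcal{C}\to pro^{J}$-$\mathcal{C}$ by $\underline{S}[(g,g_{\nu}^{k})]=[(g,g_{\nu}^{\phi(j)})]$: if the $K$-morphism clause provides $\lambda$ and $k_{0}$ for a given $\nu\leq\nu'$, cofinality of $\phi[J]$ in $K$ produces $j_{0}$ with $\phi(j_{0})\geq k_{0}$, and since $\phi$ is increasing the required $J$-equality holds for all $j'\geq j_{0}$. The well-definedness, respect of $\sim$, and preservation of composition for $\underline{S}$ are mirror images of (i)--(iv) above. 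Applying $\underline{S}$ to an isomorphism in $pro^{K}$-$\mathcal{C}$ yields one in $pro^{J}$-$\mathcal{C}$, completing the bi-implication. The main obstacle is the bookkeeping around $\phi$-independence: the two ``eventuality'' quantifiers (in $j$ and in $k$) must be carefully separated and shown to determine the same equivalence class in $pro^{K}$-$\mathcal{C}$, for which the uniform cofinality of $\psi$ across all admissible $\phi$ is the essential input.
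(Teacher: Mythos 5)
Your construction of $\underline{T}$ is the same as the paper's: the paper also uses the well-ordering to set $j_{k}=\min\{j\in J:\phi(j)\geq k\}$ and defines $\underline{T}$ on a representative by $f_{\mu}^{\prime k}=f_{\mu}^{j_{k}}$, with the same cofinality bookkeeping; your choice of $k_{0}$ strictly above $\phi(j_{0})$ is in fact slightly more careful than the paper's choice $k=\phi(j)$, where what is actually needed is $j_{k'}\geq j$ rather than the stated $j_{k'}\geq j_{k}$. For the converse implication the paper does not build a companion functor: it takes an isomorphism $\boldsymbol{g}$ of $pro^{K}$-$\mathcal{C}$ together with its inverse, pulls both back along $\phi$ via $f_{\mu}^{j}=g_{\mu}^{\phi(j)}$ and $u_{\lambda}^{j}=v_{\lambda}^{\phi(j)}$, and checks directly that the composites are equivalent to identities. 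Your $\underline{S}$ packages exactly the same computation (pullback along $\phi$ preserves the $J$-morphism condition, the relation $\sim$, and composition, by cofinality of $\phi[J]$ and monotonicity of $\phi$), so this part is equivalent to, and if anything a bit cleaner than, the paper's argument.

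The genuine gap is the $\phi$-independence of $\underline{T}$. Your sketch appeals to ``the $J$-morphism compatibility'' to show that $(f,f_{\mu}^{\psi(k)})$ and $(f,f_{\mu}^{\psi'(k)})$ record the same eventual-in-$k$ data, but Definition 1 imposes no relation at all among the morphisms $f_{\mu}^{j}$, $j\in J$, for a \emph{fixed} $\mu$ (the condition for $\mu=\mu'$ is vacuous), so there is no shared eventual data to appeal to. Concretely, take $J=K=\mathbb{N}$, $\phi=\mathrm{id}$, $\phi'(j)=2j$, rudimentary systems $\boldsymbol{X}=(X)$, $\boldsymbol{Y}=(Y)$, and two distinct morphisms $a,b\in\mathcal{C}(X,Y)$; for the $J$-morphism given by $f^{j}=a$ for $j$ even and $f^{j}=b$ for $j$ odd, the reindexed families $f^{\psi(k)}=f^{k}$ and $f^{\psi'(k)}=f^{\lceil k/2\rceil}$ disagree for all $k\equiv 2\ (\mathrm{mod}\ 4)$, hence represent different morphisms of $pro^{K}$-$\mathcal{C}$ (morphisms between rudimentary systems are exactly eventual-equality classes of families). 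So this step cannot be completed the way you indicate. To be fair, the paper itself only asserts it (``one readily sees''), so you are not missing an argument the paper supplies; but as it stands the independence claim must either be weakened (e.g.\ to the level of the induced correspondence on isomorphism classes, which is all the final equivalence uses and which your $\underline{S}$ already delivers) or supported by extra hypotheses. The remaining parts of your proposal --- the $K$-morphism verification, well-definedness on classes, preservation of composition and identities, and the bi-implication via $\underline{T}$ and $\underline{S}$ --- are sound and match the paper.
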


\begin{proof}
Since $\phi :J\rightarrow K$ is cofinal, for each $k\in K$, the subset

$J_{k}=\{j\mid k\leq \phi (j)\}\subseteq J$

\noindent is not empty. Since $J$ is well ordered, there exists $\min J_{k}$%
. Furthermore,

$k\leq k^{\prime }\Rightarrow j_{k}\equiv \min J_{k}\leq \min J_{k^{\prime
}}\equiv j_{k^{\prime }}$

\noindent because $\phi $ is increasing. Given an

$\boldsymbol{f}=[(f,f_{\mu }^{j})]:\boldsymbol{X}=\left( X_{\lambda
},p_{\lambda \lambda ^{\prime }},\Lambda \right) \rightarrow (Y_{\mu
},q_{\mu \mu ^{\prime }},M)=\boldsymbol{Y}$

\noindent of $pro^{J}$-$\mathcal{C}$, put

$f^{\prime }=f:M\rightarrow \Lambda $ \quad and

$(\forall \mu \in M)(\forall k\in K)$ $f_{\mu }^{\prime k}=f_{\mu
}^{j_{k}}:X_{f^{\prime }(\mu )}\rightarrow Y_{\mu }$.

\noindent Then

$(f^{\prime },f_{\mu }^{\prime k}):\boldsymbol{X}\rightarrow \boldsymbol{Y}$

\noindent is a morphism of $inv^{K}$-$\mathcal{C}$. Indeed, since $(f,f_{\mu
}^{j})$ is a morphism of $inv^{J}$-$\mathcal{C}$, given a related pair $\mu
\leq \mu ^{\prime }$, there exist a $\lambda \geq f(\mu ),f(\mu ^{\prime })$
and a $j$ such that, for every $j^{\prime }\geq j$,

$f_{\mu }^{j^{\prime }}p_{f(\mu )\lambda }=q_{\mu \mu ^{\prime }}f_{\mu
^{\prime }}^{j^{\prime }}p_{f(\mu ^{\prime })\lambda }$.

\noindent Choose $k=\phi (j)$, and let $k^{\prime }\geq k$. Then $%
j_{k^{\prime }}\geq j_{k}$ and

$f_{\mu }^{\prime k^{\prime }}p_{f^{\prime }(\mu )\lambda }=f_{\mu
}^{j_{k^{\prime }}}p_{f(\mu )\lambda }=q_{\mu \mu ^{\prime }}f_{\mu ^{\prime
}}^{j_{k^{\prime }}}p_{f(\mu ^{\prime })\lambda }=$ $q_{\mu \mu ^{\prime
}}f_{\mu ^{\prime }}^{\prime k^{\prime }}p_{f(\mu ^{\prime })\lambda }$.

\noindent that proves the claim. Denote

$\boldsymbol{f}^{\prime }=[(f^{\prime },f_{\mu }^{\prime k})]:\boldsymbol{X}%
\rightarrow \boldsymbol{Y}$

\noindent which is a morphism of $pro^{K}$-$\mathcal{C}$. Now a
straightforward verification shows that the assignments

$\boldsymbol{X}\mapsto \underline{T}(\boldsymbol{X})=\boldsymbol{X}$, $%
\boldsymbol{f}\mapsto \underline{T}(\boldsymbol{f})=\boldsymbol{f}^{\prime }$

\noindent define a functor

$\underline{T}:pro^{J}$-$\mathcal{C}\rightarrow pro^{K}$-$\mathcal{C}$

\noindent Finally, if $\psi :J\rightarrow K$ has the same properties as $%
\phi $, then one readily sees that $(f^{\prime \prime },f_{\mu }^{\prime
\prime k}):\boldsymbol{X}\rightarrow \boldsymbol{Y}$, constructed by means
of $\psi $, is equivalent to $(f^{\prime },f_{\mu }^{\prime k})$ in $inv^{K}$%
-$\mathcal{C}$. Thus, $\underline{T}$ does not depend on any such particular
function. In order to prove the last statement, firstly notice that the
implication

$(\boldsymbol{X}\cong \boldsymbol{Y}$ in $pro^{J}$-$\mathcal{C})\Rightarrow (%
\boldsymbol{X}\cong \boldsymbol{Y}$ in $pro^{K}$-$\mathcal{C})$

\noindent holds because there exists the functor $\underline{T}:pro^{J}$-$%
\mathcal{C}\rightarrow pro^{K}$-$\mathcal{C}$. Conversely, let $\boldsymbol{X%
}\cong \boldsymbol{Y}$ in $pro^{K}$-$\mathcal{C}$. Choose any isomorphism $%
\boldsymbol{g}:\boldsymbol{X}\rightarrow \boldsymbol{Y}$ of $pro^{K}$-$%
\mathcal{C}$, and let $(g,g_{\mu }^{k}):\boldsymbol{X}\rightarrow 
\boldsymbol{Y}$ of $inv^{K}$-$\mathcal{C}$ be any representative of $%
\boldsymbol{g}$. Let us define

$f=g:M\rightarrow \Lambda $ \quad and

$(\forall \mu \in M)(\forall j\in J)$ $f_{\mu }^{j}=g_{\mu }^{\phi
(j)}:X_{f(\mu )}\rightarrow Y_{\mu }$.

\noindent Since $\phi $ is cofinal (i.e., for every $k\in K$ there exists a $%
j\in J$ such that $\phi (j)\geq k$) and increasing (especially, for every $%
j^{\prime }\geq j$, $\phi (j^{\prime }\equiv k^{\prime }\geq \phi (j)\geq k$%
), one can easy verify that

$(f,f_{\mu }^{j}):\boldsymbol{X}\rightarrow \boldsymbol{Y}$

\noindent is a morphism of $inv^{J}$-$\mathcal{C}$, and thus, the
equivalence class

$\boldsymbol{f}=[(f,f_{\mu }^{j})]:\boldsymbol{X}\rightarrow \boldsymbol{Y}$

\noindent is a morphism of $pro^{J}$-$\mathcal{C}$. Let $\boldsymbol{v}%
\equiv \boldsymbol{g}^{-1}:\boldsymbol{Y}\rightarrow \boldsymbol{X}$ of $%
pro^{K}$-$\mathcal{C}$ be the inverse of $\boldsymbol{g}$, and let $%
(v,v_{\lambda }^{k}):\boldsymbol{Y}\rightarrow \boldsymbol{X}$ of $inv^{K}$-$%
\mathcal{C}$ be any representative of $\boldsymbol{v}$. Let us define

$u=v:\Lambda \rightarrow M$ \quad and

$(\forall \lambda \in \Lambda )(\forall j\in J)$ $u_{\lambda
}^{j}=v_{\lambda }^{\phi (j)}:Y_{u(\lambda )}\rightarrow X_{\lambda }$.

\noindent Now, as for $(f,f_{\mu }^{j})$ before, one readily verifies that

$(u,u_{\lambda }^{j}):\boldsymbol{Y}\rightarrow \boldsymbol{X}$

\noindent is a morphism of $inv^{J}$-$\mathcal{C}$, and thus, the
equivalence class

$\boldsymbol{u}=[(u,u_{\lambda }^{j})]:\boldsymbol{Y}\rightarrow \boldsymbol{%
X}$

\noindent is a morphism of $pro^{J}$-$\mathcal{C}$. Since $\boldsymbol{vg}%
=1_{\boldsymbol{X}}$ and $\boldsymbol{gv}=1_{\boldsymbol{Y}}$ in $pro^{K}$-$%
\mathcal{C}$, the relations

$(gv,v_{\lambda }^{k}g_{v(\lambda )}^{k})\sim (1_{\Lambda },1_{\lambda
}^{k}):\boldsymbol{X}\rightarrow \boldsymbol{X}$ \quad and

$(vg,g_{\mu }^{k}v_{g(\mu )}^{k})\sim (1_{M},1_{\mu }^{k});\boldsymbol{Y}%
\rightarrow \boldsymbol{Y}$

\noindent hold in $inv^{K}$-$\mathcal{C}$. Then, by our construction, one
straightforwardly verifies that

$(fu,u_{\lambda }^{j}f_{u(\lambda )}^{j})\sim (1_{\Lambda },1_{\lambda
}^{j}):\boldsymbol{X}\rightarrow \boldsymbol{X}$ \quad and

$(uf,f_{\mu }^{j}u_{f(\mu )}^{j})\sim (1_{M},1_{\mu }^{j});\boldsymbol{Y}%
\rightarrow \boldsymbol{Y}$

\noindent hold in $inv^{J}$-$\mathcal{C}$. Therefore, $\boldsymbol{u}=%
\boldsymbol{f}^{-1}$ is the inverse of $\boldsymbol{f}$ in $pro^{J}$-$%
\mathcal{C}$, implying that $\boldsymbol{X}\cong \boldsymbol{Y}$ in $pro^{J}$%
-$\mathcal{C}$.
\end{proof}

\section{The $J$-shape category of a category}

An enriched pro-category $pro^{J}$-$\mathcal{C}$ is interesting and useful
by itself because, in general, it divides (classifies) the objects into
larger classes (isomorphisms types) than the underlying pro-category $pro$-$%
\mathcal{C}$ (see Examples 7.1 and 7.2 of [19]). Moreover, in many important
cases one can go on much further, i.e., to develop the corresponding $J$%
-shape theory.

Let $\mathcal{D}$ be a full (not essential, but a convenient condition) and
pro-reflective subcategory of $\mathcal{C}$. Let $\boldsymbol{p}%
:X\rightarrow \boldsymbol{X}$ and $\boldsymbol{p}^{\prime }:X\rightarrow 
\boldsymbol{X}^{\prime }$ be $\mathcal{D}$-expansions of the same object $X$
of $\mathcal{C}$, and let $\boldsymbol{q}:Y\rightarrow \boldsymbol{Y}$ and $%
\boldsymbol{q}^{\prime }:Y\rightarrow \boldsymbol{Y}^{\prime }$ be $\mathcal{%
D}$-expansions of the same object $Y$ of $\mathcal{C}$. Then there exist two
canonical isomorphisms $\boldsymbol{i}:\boldsymbol{X}\rightarrow \boldsymbol{%
X}^{\prime }$ and $\boldsymbol{j}:\boldsymbol{Y}\rightarrow \boldsymbol{Y}%
^{\prime }$ of $pro$-$\mathcal{D}$. Consequently, for every directed
partially ordered set $J$, the (induced) morphisms $\boldsymbol{i}\equiv 
\underline{I}(\boldsymbol{i}):\boldsymbol{X}\rightarrow \boldsymbol{X}%
^{\prime }$ and $\boldsymbol{j}\equiv \underline{I}(\boldsymbol{j}):%
\boldsymbol{Y}\rightarrow \boldsymbol{Y}^{\prime }$ are isomorphisms of $%
pro^{J}$-$\mathcal{D}$. A $J$-morphism $\boldsymbol{f}:\boldsymbol{X}%
\rightarrow \boldsymbol{Y}$ is said to be $pro^{J}$-$\mathcal{D}$ \emph{%
equivalent to} a morphism $\boldsymbol{f}^{\prime }:\boldsymbol{X}^{\prime
}\rightarrow \boldsymbol{Y}^{\prime }$, denoted by $\boldsymbol{f}\sim 
\boldsymbol{f}^{\prime }$, if the following diagram in $pro^{J}$-$\mathcal{D}
$ commutes:

$%
\begin{array}{ccc}
\boldsymbol{X} & \overset{\boldsymbol{i}}{\longrightarrow } & \boldsymbol{X}%
^{\prime } \\ 
\boldsymbol{f}\downarrow &  & \downarrow \boldsymbol{f}^{\prime } \\ 
\boldsymbol{Y} & \overset{\boldsymbol{j}}{\longrightarrow } & \boldsymbol{Y}%
^{\prime }%
\end{array}%
$.

\noindent According to the analogous facts in $pro$-$\mathcal{D}$, and since 
$\underline{I}$ is a functor, the diagram defines an equivalence relation on
the appropriate subclass of $Mor(pro^{J}$-$\mathcal{D})$, such that $%
\boldsymbol{f}\sim \boldsymbol{f}^{\prime }$ and $\boldsymbol{g}\sim 
\boldsymbol{g}^{\prime }$ imply $\boldsymbol{gf}\sim \boldsymbol{g}^{\prime }%
\boldsymbol{f}^{\prime }$ whenever these compositions exist. The equivalence
class of such an $\boldsymbol{f}$ is denoted by $\left\langle \boldsymbol{f}%
\right\rangle $. Further, given $\boldsymbol{p}$, $\boldsymbol{p}^{\prime }$%
, $\boldsymbol{q}$, $\boldsymbol{q}^{\prime }$ and $\boldsymbol{f}$, there
exists a unique $\boldsymbol{f}^{\prime }$ ($=\boldsymbol{jfi}^{-1}$) such
that $\boldsymbol{f}\sim \boldsymbol{f}^{\prime }$.

We are now to define the (\emph{abstract}) $J$\emph{-shape category} $Sh_{(%
\mathcal{C},\mathcal{D})}^{J}$ \emph{for} $(\mathcal{C},\mathcal{D})$ as
follows. The objects of $Sh_{(\mathcal{C},\mathcal{D})}^{J}$ are all the
objects of $\mathcal{C}$. A morphism $F\in Sh_{(\mathcal{C},\mathcal{D}%
)}^{J}(X,Y)$ is the $(pro^{J}$-$\mathcal{D}$)-equivalence class $%
\left\langle \boldsymbol{f}\right\rangle $ of a $J$-morphism $\boldsymbol{f}:%
\boldsymbol{X}\rightarrow \boldsymbol{Y}$ of $pro^{J}$-$\mathcal{D}$, with
respect to any choice of a pair of $\mathcal{D}$-expansions $\boldsymbol{p}%
:X\rightarrow \boldsymbol{X}$, $\boldsymbol{q}:Y\rightarrow \boldsymbol{Y}$.
In other words, a $\emph{J}$\emph{-shape morphism} $F:X\rightarrow Y$ is
given by a diagram

$%
\begin{array}{ccc}
\boldsymbol{X} & \overset{\boldsymbol{p}}{\longleftarrow } & X \\ 
\boldsymbol{f}\downarrow & F &  \\ 
\boldsymbol{Y} & \overset{\boldsymbol{q}}{\longleftarrow } & Y%
\end{array}%
$.

\noindent in $pro^{J}$-$\mathcal{C}$. The \emph{composition} of such an $%
F:X\rightarrow Y$, $F=\left\langle \boldsymbol{f}\right\rangle $ and a $%
G:Y\rightarrow Z$, $G=\left\langle \boldsymbol{g}\right\rangle $, is defined
by the representatives, i.e. $GF:X\rightarrow Z$, $GF=\left\langle 
\boldsymbol{gf}\right\rangle $. The\emph{\ identity }$J$\emph{-shape morphism%
} on an object $X$, $1_{X}:X\rightarrow X$, is the $(pro^{J}$-$\mathcal{D})$%
-equivalence class $\left\langle \boldsymbol{1}_{\boldsymbol{X}%
}\right\rangle $ of the identity morphism $\boldsymbol{1}_{\boldsymbol{X}}$
of $pro^{J}$-$\mathcal{D}$. Since

$Sh_{(\mathcal{C},\mathcal{D})}^{J}(X,Y)\approx pro^{J}$-$\mathcal{D}(%
\boldsymbol{X},\boldsymbol{Y})$

\noindent is a set, the $J$-shape category $Sh_{(\mathcal{C},\mathcal{D}%
)}^{J}$ is well defined. One may say that $pro^{J}$-$\mathcal{D}$ is the 
\emph{realizing} category for the $J$-shape category $Sh_{(\mathcal{C},%
\mathcal{D})}^{J}$.

For every $f:X\rightarrow Y$ of $\mathcal{C}$ and every pair of $\mathcal{D}$%
-expansions $\boldsymbol{p}:X\rightarrow \boldsymbol{X}$, $\boldsymbol{q}%
:Y\rightarrow \boldsymbol{Y}$, there exists an $\boldsymbol{f}:\boldsymbol{X}%
\rightarrow \boldsymbol{Y}$ of $pro^{J}$-$\mathcal{D}$, such that the
following diagram in $pro^{J}$-$\mathcal{C}$ commutes:

$%
\begin{array}{ccc}
\boldsymbol{X} & \overset{\boldsymbol{p}}{\longleftarrow } & X \\ 
\boldsymbol{f}\downarrow &  & \downarrow f \\ 
\boldsymbol{Y} & \overset{\boldsymbol{q}}{\longleftarrow } & Y%
\end{array}%
$.

\noindent (Hereby, we consider $\mathcal{C}\subseteq pro$-$\mathcal{C}$ to
be subcategories of $pro^{J}$-$\mathcal{C}$!) The same $f$ and another pair
of $\mathcal{D}$-expansions $\boldsymbol{p}^{\prime }:X\rightarrow 
\boldsymbol{X}^{\prime }$, $\boldsymbol{q}^{\prime }:Y\rightarrow 
\boldsymbol{Y}^{\prime }$ yield an $\boldsymbol{f}^{\prime }:\boldsymbol{X}%
^{\prime }\rightarrow \boldsymbol{Y}^{\prime }$ of $pro^{J}$-$\mathcal{D}$.
Then, however, $\boldsymbol{f}\sim \boldsymbol{f}^{\prime }$ in $pro^{J}$-$%
\mathcal{D}$ must hold. Thus, every morphism $f\in \mathcal{C}(X,Y)$ yields
a $(pro^{J}$-$\mathcal{D})$-equivalence class $\left\langle \boldsymbol{f}%
\right\rangle $, i.e. a $J$-shape morphism $F\in Sh_{(\mathcal{C},\mathcal{D}%
)}^{J}(X,Y)$. If one defines $S^{J}(X)=X$, $X\in Ob\mathcal{C}$, and $%
S^{J}(f)=F=\left\langle \boldsymbol{f}\right\rangle $, $f\in Mor\mathcal{C}$%
, then

$S^{J}\equiv S_{(\mathcal{C},\mathcal{D})}^{J}:\mathcal{C}\rightarrow Sh_{(%
\mathcal{C},\mathcal{D})}^{J}$

\noindent becomes a functor, called the \emph{(abstract)} $J$-\emph{shape
functor}. Comparing to the (abstract) shape functor, we know that the
restriction of $S^{J}$ to $\mathcal{D}$ into the full subcategory of $Sh_{(%
\mathcal{C},\mathcal{D})}^{J}$, determined by $Ob\mathcal{D}$, is \emph{not}
a category isomorphism (Example 3 of [19]). Nevertheless, we shall prove
that $P$ and $Q$ are isomorphic objects of $\mathcal{D}$ if and only if they
are isomorphic in $Sh_{(\mathcal{C},\mathcal{D})}^{J}$, i.e. they are of the
same $J$-shape (Theorem 5 below). Thus, clearly, the $J$-shape type
classification on $\mathcal{D}$ coincides with the shape type
classification. Further, recall that for every $X\in Ob\mathcal{C}$ and
every $Q\in Ob\mathcal{D}$, the shape functor induces a bijection

$S|\cdot :\mathcal{C}(X,Q)\rightarrow Sh_{(\mathcal{C},\mathcal{D})}(X,Q)$.

\noindent However, in the same circumstances, the $J$-shape functor induces
an injection

$S^{J}|\cdot :\mathcal{C}(X,Q)\rightarrow Sh_{(\mathcal{C},\mathcal{D}%
)}^{J}(X,Q)$,

\noindent which, in general, is \emph{not} a surjection (Example 3 of [19]).
Finally, the functor $S_{(\mathcal{C},\mathcal{D})}^{J}$ factorizes as $S_{(%
\mathcal{C},\mathcal{D})}^{J}=I_{(\mathcal{C},\mathcal{D})}S_{(\mathcal{C},%
\mathcal{D})}$, where $S_{(\mathcal{C},\mathcal{D})}:\mathcal{C}\rightarrow
Sh_{(\mathcal{C},\mathcal{D})}$ is the (abstract) shape functor, while $I_{(%
\mathcal{C},\mathcal{D})}:Sh_{(\mathcal{C},\mathcal{D})}\rightarrow Sh_{%
\mathcal{C},\mathcal{D})}^{J}$ is induced by the \textquotedblleft
inclusion\textquotedblright\ functor $\underline{I}\equiv \underline{I}_{%
\mathcal{D}}:pro$-$\mathcal{D}\rightarrow pro^{J}$-$\mathcal{D}$. (This
implies that the induced function $\mathcal{C}(X,Q)\rightarrow Sh_{(\mathcal{%
C},\mathcal{D})}^{J}(X,Q)$ is an injection.)

As in the case of the abstract shape, the most interesting example of the
above construction is $\mathcal{C}=HTop$ - the homotopy category of
topological spaces and $\mathcal{D}=HPol$ - the homotopy category of
polyhedra (or $\mathcal{D}=HANR$ - the homotopy category of ANR's for metric
spaces. In this case, one speaks about the (ordinary or standard) $J$\emph{%
-shape category }

$Sh_{(HTop,HPol)}^{J}\equiv Sh^{J}(Top)\equiv Sh^{J}$

\noindent of topological spaces and of (ordinary or standard) $J$-\emph{%
shape functor}

$S^{J}:HTop\rightarrow Sh^{J}$,

\noindent which factorizes as $S^{J}=IS$, where $S:HTop\rightarrow Sh$ is
the shape functor, and $I:Sh\rightarrow Sh^{J}$ is induced by the
\textquotedblleft inclusion\textquotedblright\ functor $\underline{I}\equiv
pro$-$HPol\rightarrow pro^{J}$-$HPol$. The realizing category for $Sh^{J}$
is the category $pro^{J}$-$HPol$ (or $pro^{J}$-$HANR$). The underlying
theory might be called the (ordinary or standard) $J$\emph{-shape theory}
(for topological spaces). Clearly, on locally nice spaces ( polyhedra,
CW-complexes, ANR's, \ldots ) the $J$-shape type classification coincides
with the shape type classification and, consequently, with the homotopy type
classification.

Similarly to the case of the shape of compacta, let us consider the homotopy
(sub)category of compact metric spaces, $HcM\subseteq HTop$. Since $%
HcPol\subseteq HcM$ and $HcANR\subseteq HcM$ are \textquotedblleft \emph{%
sequentially}\textquotedblright\ pro-reflective (and homotopically
equivalent) subcategories, there exist the $J$-shape category of compacta,

$Sh^{J}(cM)\equiv Sh_{(HcM,HcPol)}^{J}\cong Sh_{(HcM,HcANR)}^{J}$, \noindent

\noindent and the corresponding (restriction of the) $J$-shape functor

$S^{J}:HcM\rightarrow Sh^{J}(cM)$,

\noindent such that $S^{J}=IS$, where $S:HcM\rightarrow Sh(cM)$ is the shape
functor on compacta, and $I:Sh(cM)\rightarrow Sh^{J}(cM)$ is induced by the
\textquotedblleft inclusion\textquotedblright\ functor $\underline{I}:tow$-$%
HcPol\rightarrow tow^{J}$-$HcPol$ (or $\underline{I}:tow$-$HcANR\rightarrow
tow^{J}$-$HcANR$). The category $Sh^{J}(cM)$ is a full subcategory of $%
Sh^{J} $. Notice that the realizing category for $Sh^{J}(cM)$ is the
category $tow^{J}$-$HcPol$ as well as the category $tow^{J}$-$HcANR$.

The following facts are immediate consequences of Theorems 3 and 4 of the
previous section.

\begin{corollary}
\label{C2}Let $\mathcal{C}$ be a category and let $\mathcal{D}\subseteq 
\mathcal{C}$ be a pro-reflective subcategory. Then

(i) $Sh_{(\mathcal{C},\mathcal{D})}=Sh_{(\mathcal{C},\mathcal{D})}^{\{1\}}$;

(ii) $Sh_{(\mathcal{C},\mathcal{D})}^{\ast }=Sh_{(\mathcal{C},\mathcal{D})}^{%
\mathbb{N}}$;

(iii) If $J$ is a directed partially ordered set having $\max J$, then $Sh_{(%
\mathcal{C},\mathcal{D})}^{J}\cong Sh_{(\mathcal{C},\mathcal{D})}$.
\end{corollary}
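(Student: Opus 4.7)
The plan is to read each part of the corollary as the image of the corresponding statement of Theorem 3 under the $J$-shape construction described in the preceding paragraphs. Since $Sh_{(\mathcal{C},\mathcal{D})}^{J}$ is built out of (a) the objects of $\mathcal{C}$ (independent of $J$), (b) hom-sets of the realizing category $pro^{J}$-$\mathcal{D}$, (c) $\mathcal{D}$-expansions (again independent of $J$), and (d) an equivalence relation using the canonical isomorphisms $\boldsymbol{i}, \boldsymbol{j}$ of $pro^{J}$-$\mathcal{D}$ produced by $\underline{I}$ from those of $pro$-$\mathcal{D}$, any identification or isomorphism between the realizing categories that keeps objects fixed and respects the image of $\underline{I}$ will automatically descend to an isomorphism of the corresponding $J$-shape categories.

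For (i), I would simply substitute $J=\{1\}$ in Definition 1: the existential quantifier over $j\in J$ collapses, so a $\{1\}$-morphism is exactly a morphism of $inv$-$\mathcal{C}$, and the equivalence of Definition 2 collapses to the standard equivalence of $pro$-$\mathcal{C}$. By Theorem 3(i), $pro^{\{1\}}\text{-}\mathcal{D}=pro\text{-}\mathcal{D}$ on the nose. Plugging this into the definition of $Sh_{(\mathcal{C},\mathcal{D})}^{\{1\}}$ yields term-for-term the definition of $Sh_{(\mathcal{C},\mathcal{D})}$ recalled in Section~2. Part (ii) is the same argument with $J=\mathbb{N}$: Theorem 3(ii) gives $pro^{\mathbb{N}}\text{-}\mathcal{D}=pro^{\ast}\text{-}\mathcal{D}$, and the equivalence relation used to define $Sh^{\mathbb{N}}$ is verbatim the one used for $Sh^{\ast}$.

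For (iii) the situation is slightly less formal because Theorem 3(iii) asserts only an isomorphism $\Phi:pro^{J}\text{-}\mathcal{D}\to pro\text{-}\mathcal{D}$ (via $[(f,f_{\mu}^{j})]\mapsto[(f,f_{\mu}^{\max J})]$), not an equality. The plan is to push this $\Phi$ through the shape construction: define a functor $\widehat{\Phi}:Sh_{(\mathcal{C},\mathcal{D})}^{J}\to Sh_{(\mathcal{C},\mathcal{D})}$ to be the identity on objects, and on morphisms by sending the class $\langle\boldsymbol{f}\rangle$ of $\boldsymbol{f}:\boldsymbol{X}\to\boldsymbol{Y}$ in $pro^{J}\text{-}\mathcal{D}$ to the class $\langle\Phi(\boldsymbol{f})\rangle$ in $pro\text{-}\mathcal{D}$. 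Well-definedness reduces to checking two things: that $\Phi$ carries the canonical isomorphisms $\boldsymbol{i},\boldsymbol{j}$ of $pro^{J}\text{-}\mathcal{D}$ (which are themselves of the form $\underline{I}(\boldsymbol{i}_{0})$, $\underline{I}(\boldsymbol{j}_{0})$ for canonical isomorphisms in $pro\text{-}\mathcal{D}$) back to $\boldsymbol{i}_{0},\boldsymbol{j}_{0}$; and that the commuting square defining $\boldsymbol{f}\sim\boldsymbol{f}^{\prime}$ in $pro^{J}\text{-}\mathcal{D}$ is mapped to the analogous square in $pro\text{-}\mathcal{D}$. Both are immediate since $\Phi$ is a functor inverting $\underline{I}$. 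The inverse functor $\widehat{I}:Sh_{(\mathcal{C},\mathcal{D})}\to Sh_{(\mathcal{C},\mathcal{D})}^{J}$ is induced analogously by $\underline{I}_{\mathcal{D}}^{J}$.

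The one place where a little care is required is verifying that the equivalence relation $\boldsymbol{f}\sim\boldsymbol{f}^{\prime}$ on shape morphisms is preserved in both directions by $\widehat{\Phi}$ and $\widehat{I}$. This is the only substantive step, but it follows because $\Phi$ and $\underline{I}$ are mutually inverse on the subcategory of commutative $J$-morphisms of $pro^{J}\text{-}\mathcal{D}$, which is exactly where the isomorphisms $\boldsymbol{i},\boldsymbol{j}$ live by construction. Composing $\widehat{\Phi}\circ\widehat{I}$ and $\widehat{I}\circ\widehat{\Phi}$ then yields the identities, establishing $Sh_{(\mathcal{C},\mathcal{D})}^{J}\cong Sh_{(\mathcal{C},\mathcal{D})}$.
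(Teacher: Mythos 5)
Your proposal is correct and matches the paper's route: the paper simply records Corollary~2 as an immediate consequence of Theorem~3, with (i) and (ii) coming from the identifications $pro^{\{1\}}$-$\mathcal{D}=pro$-$\mathcal{D}$ and $pro^{\mathbb{N}}$-$\mathcal{D}=pro^{\ast}$-$\mathcal{D}$, and (iii) from transporting the isomorphism $\boldsymbol{f}\mapsto[(f,f_{\mu}^{\max J})]$ through the shape construction exactly as you do. Your added verification that the canonical isomorphisms $\boldsymbol{i},\boldsymbol{j}$ (being in the image of $\underline{I}$) and the defining commuting squares are preserved in both directions is precisely the detail the paper leaves implicit.
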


\begin{corollary}
\label{C3}Let $\mathcal{C}$ be a category, let $\mathcal{D}\subseteq 
\mathcal{C}$ be a pro-reflective subcategory, let $J$ be a well ordered set
and let $K$ be a partially ordered set, both without maximal elements. If
there exists an increasing function $\phi :J\rightarrow K$ such that $\phi
\lbrack J]$ is cofinal in $K$, then there exists a functor

$T:Sh_{(\mathcal{C},\mathcal{D})}^{J}\rightarrow Sh_{(\mathcal{C},\mathcal{D}%
)}^{K}$

\noindent keeping the objects fixed, and $T$ does not depend on $\phi $.
Furthermore, for every pair $X$, $Y$ of objects of $\mathcal{C}$, the
equivalence

$(X\cong Y$ in $Sh_{(\mathcal{C},\mathcal{D})}^{J})\Leftrightarrow (X\cong Y$
in $Sh_{(\mathcal{C},\mathcal{D})}^{K})$

\noindent holds true.
\end{corollary}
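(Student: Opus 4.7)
The plan is to transport the functor $\underline{T}:pro^{J}$-$\mathcal{D}\rightarrow pro^{K}$-$\mathcal{D}$ of Theorem 4 (applied to $\mathcal{D}$, which is directed and cofinite-friendly enough because pro-reflectiveness provides expansions) down to the shape level, exactly mirroring the construction of the shape functor on top of the realizing pro-category. For every object $X\in Ob\,\mathcal{C}$ one fixes once and for all a $\mathcal{D}$-expansion $\boldsymbol{p}:X\rightarrow\boldsymbol{X}$; then a morphism $F=\langle\boldsymbol{f}\rangle\in Sh_{(\mathcal{C},\mathcal{D})}^{J}(X,Y)$ is sent to $T(F):=\langle\underline{T}(\boldsymbol{f})\rangle\in Sh_{(\mathcal{C},\mathcal{D})}^{K}(X,Y)$. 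Objects are kept fixed.

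To see that this is well defined I must verify that $\underline{T}$ respects the equivalence relation $\sim$ which defines shape morphisms. Recall that $\boldsymbol{f}\sim\boldsymbol{f}^{\prime}$ in $pro^{J}$-$\mathcal{D}$ means that the square with canonical isomorphisms $\boldsymbol{i}:\boldsymbol{X}\rightarrow\boldsymbol{X}^{\prime}$, $\boldsymbol{j}:\boldsymbol{Y}\rightarrow\boldsymbol{Y}^{\prime}$ commutes. These canonical isomorphisms are precisely the images under $\underline{I}_{\mathcal{D}}^{J}$ of the corresponding isomorphisms of $pro$-$\mathcal{D}$. Inspecting the construction of $\underline{T}$ on such an induced morphism $[(i,i_{\mu}^{j}=i_{\mu})]$, one obtains $[(i,i_{\mu}^{\prime k}=i_{\mu}^{j_{k}}=i_{\mu})]$, which is $\underline{I}_{\mathcal{D}}^{K}(\boldsymbol{i})$. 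Hence $\underline{T}\circ\underline{I}_{\mathcal{D}}^{J}=\underline{I}_{\mathcal{D}}^{K}$, so $\underline{T}$ sends canonical $J$-isomorphisms to canonical $K$-isomorphisms. Since $\underline{T}$ is a functor by Theorem 4, applying it to the commutative square $\boldsymbol{j}\boldsymbol{f}=\boldsymbol{f}^{\prime}\boldsymbol{i}$ yields the analogous commutative square in $pro^{K}$-$\mathcal{D}$, so $\underline{T}(\boldsymbol{f})\sim\underline{T}(\boldsymbol{f}^{\prime})$. Well-definedness with respect to the choice of expansions is the same check. Functoriality of $T$ and the identity-preserving property are then inherited from the corresponding properties of $\underline{T}$, and independence of $\phi$ is inherited directly from the final statement of Theorem 4.

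For the isomorphism equivalence I use the standard fact that $X\cong Y$ in $Sh_{(\mathcal{C},\mathcal{D})}^{L}$ (for any $L$) if and only if $\boldsymbol{X}\cong\boldsymbol{Y}$ in $pro^{L}$-$\mathcal{D}$, for any (equivalently, some) fixed pair of $\mathcal{D}$-expansions of $X$ and $Y$; this is exactly how shape morphisms are defined via $(pro^{L}\text{-}\mathcal{D})$-equivalence classes. Applying Theorem 4 to $\mathcal{D}$ (the hypotheses on $J$, $K$, $\phi$ are on the indexing sets alone, so they carry over unchanged) gives the equivalence $\boldsymbol{X}\cong\boldsymbol{Y}$ in $pro^{J}$-$\mathcal{D}$ iff $\boldsymbol{X}\cong\boldsymbol{Y}$ in $pro^{K}$-$\mathcal{D}$, and hence $X\cong Y$ in $Sh_{(\mathcal{C},\mathcal{D})}^{J}$ iff $X\cong Y$ in $Sh_{(\mathcal{C},\mathcal{D})}^{K}$.

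The main obstacle is the well-definedness check, i.e.\ confirming that $\underline{T}$ commutes with the induction functors $\underline{I}_{\mathcal{D}}^{J}$, $\underline{I}_{\mathcal{D}}^{K}$ on those morphisms (the canonical isomorphisms between different $\mathcal{D}$-expansions) that govern the equivalence relation used to define shape morphisms. Once this compatibility is in hand everything else is essentially a transcription of Theorem 4 from the realizing category to the shape category, which is why the statement is packaged as a corollary rather than an independent theorem.
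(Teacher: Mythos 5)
Your proposal is correct and follows essentially the route the paper intends: Corollary 3 is stated there as an immediate consequence of Theorem 4 applied to the realizing categories $pro^{J}$-$\mathcal{D}$ and $pro^{K}$-$\mathcal{D}$, transported to the shape level via the identification $Sh_{(\mathcal{C},\mathcal{D})}^{L}(X,Y)\approx (pro^{L}\text{-}\mathcal{D})(\boldsymbol{X},\boldsymbol{Y})$. Your explicit check that $\underline{T}\circ\underline{I}_{\mathcal{D}}^{J}=\underline{I}_{\mathcal{D}}^{K}$ on the canonical isomorphisms, which makes the equivalence-class construction well defined, is exactly the detail the paper leaves implicit.
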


An important property of a shape theory is that the shape type of a
\textquotedblleft nice\textquotedblright\ object of $\mathcal{C}$ and its
isomorphism class (in $\mathcal{C}$) coincide. We are to show this property
holds for a $J$-shape theory as well. Let $\mathcal{D}$ be a full and
pro-reflective subcategory of $\mathcal{C}$, let $X\in Ob\mathcal{C}$ and
let $\boldsymbol{p}=(p_{\lambda }):X\rightarrow \boldsymbol{X}=(X_{\lambda
},p_{\lambda \lambda ^{\prime }},\Lambda )$ be a $\mathcal{D}$-expansion of $%
X$. Further, let $J$ be a directed partially ordered set, let $Q\in Ob%
\mathcal{D}$ and let a family $(\varphi ^{j})$ of $\mathcal{C}$-morphisms $%
\varphi ^{j}:X\rightarrow Q$, $j\in J$, be given. We say that $(\varphi
^{j}) $ \emph{uniformly factorizes through} $\boldsymbol{p}$ if there exists
a (fixed) $\lambda \in \Lambda $ such that, for every $j$, $\varphi ^{j}$
factorizes through $p_{\lambda }$. Such a family $(\varphi ^{j})$ determines
a $J$-shape morphism $F:X\rightarrow Q$. Indeed, then there is a $\lambda
\in \Lambda $ such that, for every $j\in J$, there exists a morphism $%
f^{j}:X_{\lambda }\rightarrow Q$ of $\mathcal{D}$ ($\mathcal{D}\subseteq 
\mathcal{C}$ is full) satisfying $\varphi ^{j}=f^{j}p_{\lambda }$. Hence,
the family $(f^{j})$ (with the index function $\{1\}\rightarrow \Lambda $, $%
1\mapsto \lambda $) determines a unique morphism $\boldsymbol{f}=[(f^{j})]:%
\boldsymbol{X}\rightarrow \boldsymbol{Q}=(Q)$ of $pro^{J}$-$\mathcal{D}$.
Since $\boldsymbol{1}:Q\rightarrow \boldsymbol{Q}$ is a $\mathcal{D}$%
-expansion of $Q$, the morphism $\boldsymbol{f}$ determines a unique $J$%
-shape morphism $F=\left\langle \boldsymbol{f}\right\rangle :X\rightarrow Q$
of $Sh_{(\mathcal{C},\mathcal{D})}^{J}$. We say that such an $F$ is \emph{%
induced by} $(\varphi ^{j})$. Notice that the above construction depends on
the index $\lambda $. The converse reads as follows.

\begin{lemma}
\label{L7}Let $X\in Ob\mathcal{C}$, let $\boldsymbol{p}=(p_{\lambda
}):X\rightarrow \boldsymbol{X}=(X_{\lambda },p_{\lambda \lambda ^{\prime
}},\Lambda )$ be a $\mathcal{D}$-expansion of $X$ and let $Q\in Ob\mathcal{D}
$. Then, for every directed partially ordered set $J$, every $J$-shape
morphism $F:X\rightarrow Q$ of $Sh_{(\mathcal{C},\mathcal{D})}^{J}$ is
induced by a family of morphisms $\varphi ^{j}:X\rightarrow Q$ of $\mathcal{C%
}$, $j\in J$, such that $(\varphi ^{j})$ uniformly factorizes through $%
\boldsymbol{p}$.
\end{lemma}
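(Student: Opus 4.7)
The plan is to exploit two things simultaneously: that $Q\in Ob\,\mathcal{D}$ admits the canonical rudimentary $\mathcal{D}$-expansion $\boldsymbol{1}_{Q}\colon Q\to\boldsymbol{Q}=(Q)$, and the observation of Remark 2 that a $pro^{J}$-$\mathcal{D}$ morphism whose codomain is a rudimentary system is completely described by a single index $\lambda_{0}\in\Lambda$ together with a $J$-indexed family of $\mathcal{D}$-morphisms out of $X_{\lambda_{0}}$. Once this reduction is made, building the family $(\varphi^{j})$ will be essentially automatic.

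First I would unravel the definition of $F$. Since a $J$-shape morphism is by definition the equivalence class $\langle\boldsymbol{f}\rangle$ computed with respect to \emph{any} chosen pair of $\mathcal{D}$-expansions of $X$ and $Q$, I pick this pair to be the given $\boldsymbol{p}\colon X\to\boldsymbol{X}$ and the canonical $\boldsymbol{1}_{Q}\colon Q\to\boldsymbol{Q}=(Q)$. This delivers a morphism $\boldsymbol{f}\in(pro^{J}\text{-}\mathcal{D})(\boldsymbol{X},\boldsymbol{Q})$ with $F=\langle\boldsymbol{f}\rangle$. Next I pick any $J$-morphism representative $(f,f_{0}^{j})$ of $\boldsymbol{f}$. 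As the index set of $\boldsymbol{Q}=(Q)$ is the singleton $\{0\}$, the index function $f$ is nothing but the choice of one $\lambda_{0}:=f(0)\in\Lambda$, while $(f_{0}^{j})_{j\in J}$ is a family of $\mathcal{D}$-morphisms $f_{0}^{j}\colon X_{\lambda_{0}}\to Q$, with Definition 1 imposing no compatibility condition because there are no nontrivial pairs $\mu\leq\mu'$ in $M=\{0\}$ to check beyond $0\leq 0$.

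I then define $\varphi^{j}:=f_{0}^{j}\circ p_{\lambda_{0}}\colon X\to Q$ for each $j\in J$. Since $\mathcal{D}\subseteq\mathcal{C}$ is full, each $\varphi^{j}$ is an honest $\mathcal{C}$-morphism, and the entire family factorizes through the same $p_{\lambda_{0}}$; hence $(\varphi^{j})$ uniformly factorizes through $\boldsymbol{p}$. To conclude, I check that $F$ is the $J$-shape morphism induced by $(\varphi^{j})$: the construction described immediately before the lemma associates to $(\varphi^{j})$ the $J$-morphism with index function $0\mapsto\lambda_{0}$ and coordinate morphisms $f^{j}=f_{0}^{j}$, which is literally the representative $(f,f_{0}^{j})$ of $\boldsymbol{f}$ I started with; therefore the induced $J$-shape morphism equals $\langle\boldsymbol{f}\rangle=F$.

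No step is genuinely difficult; the one point worth highlighting is the initial reduction to a rudimentary codomain. Everything hinges on choosing the canonical $\mathcal{D}$-expansion $\boldsymbol{1}_{Q}$ for $Q$ (permissible because the $J$-shape morphism is independent of the choice of expansions), since without this the index function $f\colon M\to\Lambda$ could take many values and the uniform factorization would fail. If one insisted on using some other, non-rudimentary expansion $\boldsymbol{q}\colon Q\to\boldsymbol{Y}$ of $Q$, one would first have to transport the representative along the canonical isomorphism $\boldsymbol{Q}\to\boldsymbol{Y}$ in $pro^{J}$-$\mathcal{D}$, but the setup of $Sh_{(\mathcal{C},\mathcal{D})}^{J}$ lets us bypass that step entirely.
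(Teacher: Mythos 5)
Your argument is correct and coincides with the paper's own proof: both take the rudimentary expansion $\boldsymbol{1}:Q\rightarrow (Q)$, choose a representative $\boldsymbol{f}:\boldsymbol{X}\rightarrow (Q)$ of $F$ determined by a single index $\lambda_{0}\in \Lambda$ and a family $(f^{j})$ of $\mathcal{D}$-morphisms $X_{\lambda_{0}}\rightarrow Q$, and set $\varphi^{j}=f^{j}p_{\lambda_{0}}$, which uniformly factorizes through $\boldsymbol{p}$ and induces $F$. Your explicit check that the induced morphism is literally represented by the same $(f,f_{0}^{j})$ is a welcome (if minor) amplification of the paper's terser conclusion.
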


\begin{proof}
Let $F:X\rightarrow Q$ be a $J$-shape morphism of $Sh_{(\mathcal{C},\mathcal{%
D})}^{J}$. For $\mathcal{D}$-expansions $\boldsymbol{p}=(p_{\lambda
}):X\rightarrow \boldsymbol{X}$ and $\boldsymbol{1}:Q\rightarrow \boldsymbol{%
Q}=(Q)$, there exists a representative $\boldsymbol{f}:\boldsymbol{X}%
\rightarrow (Q)$ of $pro^{J}$-$\mathcal{D}$ of $F$. Consequently, there
exist a $\lambda \in \Lambda $ and a family $(f^{j})$ of $\mathcal{D}$%
-morphisms $f^{j}:X_{\lambda }\rightarrow Q$, $j\in J$, which determines $%
\boldsymbol{f}$. Then, by putting $\varphi ^{j}=f^{j}p_{\lambda }$, $j\in J$%
, one obtains the desired inducing family $(\varphi ^{j})$ for $F$.
\end{proof}

Let $(\varphi ^{j})$ and $(\varphi ^{\prime j})$ uniformly factorize through
the same $\mathcal{D}$-expansion $\boldsymbol{p}:X\rightarrow \boldsymbol{X}$
(via a $\lambda $ and a $\lambda ^{\prime }$ respectively). Then $(\varphi
^{j})$ is said \emph{to be almost equal to} $(\varphi ^{\prime j})$, if
there exist a $\lambda _{0}\geq \lambda ,\lambda ^{\prime }$ and a $j_{0}\in
J$ such that

$(\forall j\geq j_{0})$ $\varphi ^{j}p_{\lambda \lambda _{0}}=\varphi
^{\prime j}p_{\lambda ^{\prime }\lambda _{0}}$.

\noindent Clearly, it is an equivalence relation. Further, since $%
\boldsymbol{p}$ is a $\mathcal{D}$-expansion, $(\varphi ^{j})$ and $(\varphi
^{\prime j})$ are almost equal, if and only if there exists a $j_{0}\in J$
such that $\varphi ^{j}=\varphi ^{\prime j}:X\rightarrow Q$, for all $j\geq
j_{0}$.

\begin{lemma}
\label{L8}Let $(\varphi ^{j})$ and $(\varphi ^{\prime j})$ (of $X\in Ob%
\mathcal{C}$ to $Q\in Ob\mathcal{D}$) uniformly factorize through the same $%
\mathcal{D}$-expansion $\boldsymbol{p}:X\rightarrow \boldsymbol{X}$, and let 
$F:X\rightarrow Q$ and $F^{\prime }:X\rightarrow Q$ of $Sh_{(\mathcal{C},%
\mathcal{D})}^{J}$ be induced by $(\varphi ^{j})$ and $(\varphi ^{\prime j})$
respectively. Then $F=F^{\prime }$ if and only if $(\varphi ^{j})$ and $%
(\varphi ^{\prime j})$ are almost equal.
\end{lemma}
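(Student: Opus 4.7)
The plan is to reduce the statement to the underlying equivalence relation on $inv^{J}$-$\mathcal{D}$. Because both $F$ and $F'$ are induced with respect to the same pair of $\mathcal{D}$-expansions, namely $\boldsymbol{p}:X\rightarrow\boldsymbol{X}$ and $\boldsymbol{1}:Q\rightarrow\boldsymbol{Q}=(Q)$, the canonical isomorphisms $\boldsymbol{i}$ and $\boldsymbol{j}$ entering the $(pro^{J}$-$\mathcal{D})$-equivalence relation are the respective identities. Consequently, the equality $F=F'$ of $J$-shape morphisms collapses to the plain equality $\boldsymbol{f}=\boldsymbol{f}'$ in $(pro^{J}$-$\mathcal{D})(\boldsymbol{X},\boldsymbol{Q})$, which in turn is the relation $(f,f_{0}^{j})\sim(f',f_{0}^{\prime j})$ in $inv^{J}$-$\mathcal{D}$, where $(f,f_{0}^{j})$ and $(f',f_{0}^{\prime j})$ are the representatives that arise from the factorizations $\varphi^{j}=f^{j}p_{\lambda}$ and $\varphi^{\prime j}=f^{\prime j}p_{\lambda'}$ used to induce $F$ and $F'$.

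Next, I would unpack Definition 2 in this special situation. Since $\boldsymbol{Q}=(Q)$ has the singleton index set $\{0\}$, with $f(0)=\lambda$, $f'(0)=\lambda'$, $f_{0}^{j}=f^{j}:X_{\lambda}\rightarrow Q$, and $f_{0}^{\prime j}=f^{\prime j}:X_{\lambda'}\rightarrow Q$, the condition $(f,f_{0}^{j})\sim(f',f_{0}^{\prime j})$ reduces to the existence of some $\lambda_{0}\in\Lambda$ with $\lambda_{0}\geq\lambda,\lambda'$ and of some $j_{0}\in J$ such that $f^{j}p_{\lambda\lambda_{0}}=f^{\prime j}p_{\lambda'\lambda_{0}}$ for every $j\geq j_{0}$. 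This is precisely the condition that $(\varphi^{j})$ and $(\varphi^{\prime j})$ are almost equal, once one identifies $\varphi^{j}p_{\lambda\lambda_{0}}$ with $f^{j}p_{\lambda\lambda_{0}}$ via the factorization $\varphi^{j}=f^{j}p_{\lambda}$. Reading this chain of reductions in both directions then yields the biconditional: from $F=F'$ one extracts the witnesses $\lambda_{0}$ and $j_{0}$ exhibiting almost equality, and conversely almost equality provides the same witnesses for $(f,f_{0}^{j})\sim(f',f_{0}^{\prime j})$, hence $F=\langle\boldsymbol{f}\rangle=\langle\boldsymbol{f}'\rangle=F'$.

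The only delicate bookkeeping point I foresee is the passage between the ``upstairs'' family $(f^{j})$ defined on $X_{\lambda}$ (respectively on $X_{\lambda'}$) and the ``downstairs'' family $(\varphi^{j})$ defined on $X$: one has to check that the defining equation of almost equality, involving $p_{\lambda\lambda_{0}}$, is really the same equation as the one supplied by Definition 2, and this uses nothing more than composing with $p_{\lambda_{0}}$ on the right and $\varphi^{j}=f^{j}p_{\lambda}$, $\varphi^{\prime j}=f^{\prime j}p_{\lambda'}$. No further machinery is required beyond Definition 2, the rudimentariness of $\boldsymbol{Q}$, and the observation that the two $\mathcal{D}$-expansions in play are identical, so all canonical isomorphisms in the definition of $(pro^{J}$-$\mathcal{D})$-equivalence are the corresponding identities.
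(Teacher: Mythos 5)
Your proposal is correct and takes essentially the same route as the paper's own proof: since both $F$ and $F'$ are formed with respect to the same pair of expansions $\boldsymbol{p}:X\rightarrow\boldsymbol{X}$ and $\boldsymbol{1}:Q\rightarrow(Q)$, the equality $F=F'$ reduces to $\boldsymbol{f}=\boldsymbol{f}'$ in $pro^{J}$-$\mathcal{D}$, and Definition 2 applied to the rudimentary system $(Q)$ is precisely the almost-equality condition on the factorizing families $(f^{j})$, $(f^{\prime j})$. The paper argues exactly this, only more tersely, so no further comparison is needed.
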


\begin{proof}
Let $(\varphi ^{j})$ and $(\varphi ^{\prime j})$ uniformly factorize through
the same $\boldsymbol{p}:X\rightarrow \boldsymbol{X}$, i.e., let there exist 
$\lambda ,\lambda ^{\prime }\in \Lambda $ such that, for every $j\in J$, $%
\varphi ^{j}=f^{j}p_{\lambda }$ and $\varphi ^{\prime j}=f^{\prime
j}p_{\lambda ^{\prime }}$, where $f^{j}:X_{\lambda }\rightarrow Q$ and $%
f^{\prime j}:X_{\lambda ^{\prime }}\rightarrow Q$ are morphisms of $\mathcal{%
D}$. Let $F:X\rightarrow Q$ and $F^{\prime }:X\rightarrow Q$ be the $J$%
-shape morphisms of $Sh_{(\mathcal{C},\mathcal{D})}^{J}$ induced by $%
(\varphi ^{j})$ and $(\varphi ^{\prime j})$ respectively. Let $\boldsymbol{f}%
,\boldsymbol{f}^{\prime }:\boldsymbol{X}\rightarrow \boldsymbol{Q}=(Q)$ of $%
pro^{J}$-$\mathcal{D}$ be representatives of $F,F^{\prime }$ respectively.
Now, if $F=F^{\prime }$ then $\boldsymbol{f}=\boldsymbol{f}^{\prime }$, and $%
\boldsymbol{f}$, $\boldsymbol{f}^{\prime }$ are determined by the families $%
(f^{j})$, $(f^{\prime j})$ respectively. Therefore, there exist a $\lambda
_{0}\geq \lambda ,\lambda ^{\prime }$ and a $j_{0}\in J$ such that

$(\forall j\geq j_{0})$ $f^{j}p_{\lambda \lambda _{0}}=f^{\prime
j}p_{\lambda ^{\prime }\lambda _{0}}$.

\noindent This means that $(\varphi ^{j})$ and $(\varphi ^{\prime j})$ are
almost equal. Conversely, if $(\varphi ^{j})$ and $(\varphi ^{\prime j})$
are almost equal, then the corresponding families $(f^{j})$ and $(f^{\prime
j})$ induce the same morphism $\boldsymbol{f}:\boldsymbol{X}\rightarrow (Q)$
of $pro^{J}$-$\mathcal{D}$. Consequently, the families $(\varphi ^{j})$ and $%
(\varphi ^{\prime j})$ induce the same $J$-shape morphism $F=\left\langle 
\boldsymbol{f}\right\rangle =F^{\prime }:X\rightarrow Q$ of $Sh_{(\mathcal{C}%
,\mathcal{D})}^{J}$.
\end{proof}

Consider now the more special case where $X\equiv P\in Ob\mathcal{D}$ too.
Then $\boldsymbol{1}:P\rightarrow \boldsymbol{P}=(P)$ and $\boldsymbol{1}%
:Q\rightarrow \boldsymbol{Q}=(Q)$ are (the rudimentary) $\mathcal{D}$%
-expansions. Thus, every $J$-shape morphism $F:P\rightarrow Q$ of $Sh_{(%
\mathcal{C},\mathcal{D})}^{J}$ is induced by a family of $\mathcal{D}$%
-morphisms $f^{j}:P\rightarrow Q$, $j\in J$. Furthermore, any two such
families $(f^{j})$, $(f^{\prime j})$ induce the same $J$-ahape morphism, if
and only if $f^{j}=f^{\prime j}$ for almost all $j$ (all $j\geq j_{0}$,
where $j_{0}$ is a fixed index). This implies that there is a surjection

$(\mathcal{D}(P,Q))^{J}\rightarrow Sh_{(\mathcal{C},\mathcal{D})}^{J}(P,Q)$

\noindent of the set of all $J$-families $\Phi =(f^{j})_{j\in J}$ of $%
\mathcal{D}$-morphisms $f^{j}:P\rightarrow Q$ onto the set of all $J$-shape
morphisms $F:P\rightarrow Q$ of $Sh_{(\mathcal{C},\mathcal{D})}^{J}$.
Finally, one can readily see that if an $F:P\rightarrow Q$ is induced by an $%
(f^{j})$ and a $G:Q\rightarrow R$ is induced by a $(g^{j})$, then the
composition $GF:P\rightarrow R$ is induced by $(g^{j}f^{j})$. The following
theorem generalizes Claim 3 of [19].

\begin{theorem}
\label{T5}Let $\mathcal{D}$ be a pro reflective subcategory of $\mathcal{C}$
and let $J$ be a directed partially ordered set. Then, for every pair $%
P,Q\in Ob\mathcal{D}$, the following statements are equivalent:

(i) $P$ and $Q$ are isomorphic objects of $\mathcal{D}$, $P\cong Q$ in $%
\mathcal{D}\subseteq \mathcal{C}$; ;

(ii) $P$ and $Q$ have the same shape, $Sh(P)=Sh(Q)$, i.e., $P\cong Q$ in $%
Sh_{(\mathcal{CD})}$;

(iii) $P$ and $Q$ have the same $J$-shape, $Sh^{J}(P)=Sh^{J}(Q)$, i.e., $%
P\cong Q$ in $Sh_{(\mathcal{CD})}^{J}$
\end{theorem}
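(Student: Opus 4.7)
The plan is to prove the cycle of implications $(i) \Rightarrow (ii) \Rightarrow (iii) \Rightarrow (i)$, exploiting the factorization $S^{J} = I_{(\mathcal{C},\mathcal{D})}S_{(\mathcal{C},\mathcal{D})}$ established earlier and the two preceding lemmas about families of morphisms inducing $J$-shape morphisms from objects of $\mathcal{D}$.

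The first two implications are functorial and should be immediate. For $(i) \Rightarrow (ii)$, apply the abstract shape functor $S_{(\mathcal{C},\mathcal{D})}$ to an isomorphism $P \cong Q$ in $\mathcal{D}$, which, as a functor, must preserve isomorphisms. For $(ii) \Rightarrow (iii)$, use the factorization $S^{J} = I_{(\mathcal{C},\mathcal{D})}S_{(\mathcal{C},\mathcal{D})}$: since $I_{(\mathcal{C},\mathcal{D})}$ is a functor, the image of a shape isomorphism is a $J$-shape isomorphism. No obstacle is expected here.

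The substantive step is $(iii) \Rightarrow (i)$, and this is where the main work lies. Suppose $F \colon P \to Q$ is a $J$-shape isomorphism with inverse $G \colon Q \to P$ in $Sh_{(\mathcal{C},\mathcal{D})}^{J}$. Since $P, Q \in Ob\mathcal{D}$, the rudimentary systems $\boldsymbol{1} \colon P \to (P)$ and $\boldsymbol{1} \colon Q \to (Q)$ are $\mathcal{D}$-expansions, so by Lemma 7 applied to both $F$ and $G$ there are families $(f^{j})$ of $\mathcal{D}$-morphisms $f^{j} \colon P \to Q$ and $(g^{j})$ of $\mathcal{D}$-morphisms $g^{j} \colon Q \to P$, $j \in J$, that induce $F$ and $G$ respectively (the uniform factorization through the identity expansion is trivial). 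By the composition rule for induced $J$-shape morphisms noted just before the theorem, $GF$ is induced by $(g^{j}f^{j})$ while $1_{P}$ is induced by the constant family $(1_{P})$; similarly $FG$ is induced by $(f^{j}g^{j})$ and $1_{Q}$ by $(1_{Q})$.

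Now invoke Lemma 8: because $GF = 1_{P}$ and $FG = 1_{Q}$, the families $(g^{j}f^{j})$ and $(1_{P})$ are almost equal, and so are $(f^{j}g^{j})$ and $(1_{Q})$. Unpacking the definition of \emph{almost equal} in the rudimentary expansion case (where all bonding morphisms are identities) yields indices $j_{0}, j_{1} \in J$ such that $g^{j}f^{j} = 1_{P}$ for all $j \geq j_{0}$ and $f^{j}g^{j} = 1_{Q}$ for all $j \geq j_{1}$. Since $J$ is directed, pick any $j_{2} \geq j_{0}, j_{1}$; then $f^{j_{2}}$ and $g^{j_{2}}$ are mutually inverse $\mathcal{D}$-morphisms, proving $P \cong Q$ in $\mathcal{D}$. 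The main obstacle is the bookkeeping around identifying the induced families with genuine morphisms of $\mathcal{D}$ and confirming that \emph{almost equal} collapses to pointwise equality once the $\mathcal{D}$-expansion is the identity; both are settled by the observation in the paragraph preceding Lemma 8 and the directedness of $J$.
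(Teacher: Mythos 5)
Your proposal is correct and takes essentially the same route as the paper: the substantive implication (iii) $\Rightarrow$ (i) is argued exactly as there, using Lemma 7 (families $(f^{j})$, $(g^{j})$ of $\mathcal{D}$-morphisms inducing $F$ and $G$ via the rudimentary expansions), the composition rule for inducing families, and Lemma 8 to conclude $g^{j}f^{j}=1_{P}$ and $f^{j}g^{j}=1_{Q}$ for almost all $j$, whence directedness of $J$ yields a common index. The only difference is organizational: you close the cycle (i) $\Rightarrow$ (ii) $\Rightarrow$ (iii) $\Rightarrow$ (i), so you never need the ``well known'' implication (ii) $\Rightarrow$ (i) that the paper invokes for its equivalence (i) $\Leftrightarrow$ (ii).
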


\begin{proof}
The equivalence (i) $\Leftrightarrow $ (ii) is the well known fact. The
implication (ii) $\Rightarrow $ (iii) follows by the functor $I_{(\mathcal{C}%
,\mathcal{D})}:Sh_{(\mathcal{C},\mathcal{D})}\rightarrow Sh_{\mathcal{C},%
\mathcal{D})}^{J}$. Let $P,Q\in Ob\mathcal{D}$ have the same $J$-shape. Then
there exists a pair of $J$-shape isomorphisms $F:P\rightarrow Q$, $%
G:Q\rightarrow P$ such that $GF=1_{P}$ and $FG=1_{Q}$ in $Sh_{(\mathcal{C},%
\mathcal{D})}^{J}$. By the above consideration, there exist families $%
(f^{j}) $ and $(g^{j})$ of $\mathcal{D}$-morphisms $f^{j}:P\rightarrow Q$
and $g^{j}:Q\rightarrow P$, $j\in J$, which induce $F$ and $G$ respectively.
Furthermore, the families $(g^{j}f^{j})$ and $(f^{j}g^{j})$ induce $1_{P}$
and $1_{Q}$ (of $Sh_{(\mathcal{C},\mathcal{D})}^{J}$). Since the constant
family $(1_{P}^{j}=1_{P})$ and $(1_{Q}^{j}=1_{Q})$ also induce $1_{P}$ and $%
1_{Q}$ (of $Sh_{(\mathcal{C},\mathcal{D})}^{J}$) respectively, Lemma 8
implies that $g^{j}f^{j}=1_{P}$ and $f^{j}g^{j}=1_{Q}$ hold for almost all $%
j\in J$. Consequently, $P$ and $Q$ are isomorphic objects of $\mathcal{D}$,
and thus, (iii) $\Rightarrow $ (i)$.$
\end{proof}

\section{The continuity theorem for $J$-shape}

A very important benefit of the standard shape theory comparing to the
homotopy theory is the continuity property, i.e., the category $Sh$ admits
the limit functor, while it fails for $HTop$. Moreover, in general, every
(abstract) shape theory has the continuity property (Theorem I.2.6. of
[24]). Further, the continuity property holds for every coarse and every
weak shape theory (Theorems 1 and 2 of [31]). We shall prove hereby that
every $J$-shape theory has the continuity property as well.

\begin{theorem}
\label{T6)}Let $\mathcal{D}$ be a pro-reflective subcategory of $\mathcal{C}$
and let $J$ be a directed partially ordered set. Let $X,Y\in Ob\mathcal{C}$,
let $\boldsymbol{q}=(q_{\mu }):Y\rightarrow \boldsymbol{Y}=(Y_{\mu },q_{\mu
\mu ^{\prime }},M)$ be a $\mathcal{C}$-expansion of $Y$ and let $\boldsymbol{%
H}=(H_{\mu }):X\rightarrow S^{J}(\boldsymbol{Y})$ be a morphism of $pro$-$%
Sh_{(\mathcal{C},\mathcal{D})}^{J}$. Then there exists a unique $J$-shape
morphism $F:X\rightarrow Y$ such that $\boldsymbol{H}=\boldsymbol{Q}F$,
where $\boldsymbol{Q}=(Q_{\mu })=S^{J}(\boldsymbol{q}):Y\rightarrow S^{J}(%
\boldsymbol{Y})$ is the morphism of $pro$-$Sh_{(\mathcal{C},\mathcal{D}%
)}^{J} $ induced by $\boldsymbol{q}$, i.e., for every $\mu \in M$, $H_{\mu
}=Q_{\mu }F$, and $Q_{\mu }$ is induced by $q_{\mu }$, $Q_{\mu
}=S^{J}(q_{\mu })$. In other words, if $\boldsymbol{q}:Y\rightarrow 
\boldsymbol{Y}$ is a $\mathcal{C}$-expansion, then $\boldsymbol{Q}=S^{J}(%
\boldsymbol{q}):Y\rightarrow S^{J}(\boldsymbol{Y})$ is an inverse limit in $%
Sh_{(\mathcal{C},\mathcal{D})}^{J}$, i.e., every $\mathcal{C}$-expansion $%
\boldsymbol{q}:Y\rightarrow \boldsymbol{Y}$ induces, for each $X$, a
bijection

$pro$-$Sh_{(\mathcal{C},\mathcal{D})}^{J}(\left\lfloor X\right\rfloor ,S^{J}(%
\boldsymbol{Y}))\approx Sh_{(\mathcal{C},\mathcal{D})}^{J}(X,Y)$,

\noindent defined by the following diagram \smallskip

$%
\begin{array}{ccc}
&  & X \\ 
& \boldsymbol{H}\swarrow & \downarrow F \\ 
S^{J}(\boldsymbol{Y}) & \underset{S^{J}(\boldsymbol{q})}{\leftarrow } & Y%
\end{array}%
$. \smallskip
\end{theorem}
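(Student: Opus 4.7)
The plan is to mirror the Marde\v{s}i\'{c}--Segal proof of continuity for ordinary shape (Theorem I.2.6 of [24]), upgrading every use of $pro$-$\mathcal{D}$ to $pro^{J}$-$\mathcal{D}$ and invoking Lemmata 7 and 8 wherever the correspondence between $J$-shape morphisms into an object of $\mathcal{D}$ and uniformly factorising $J$-families of $\mathcal{C}$-morphisms is needed. First I would fix a $\mathcal{D}$-expansion $\boldsymbol{p}:X\to\boldsymbol{X}=(X_{\lambda},p_{\lambda\lambda'},\Lambda)$ of $X$ and a $\mathcal{D}$-expansion $\boldsymbol{r}=(r_{\sigma}):Y\to\boldsymbol{R}=(R_{\sigma},r_{\sigma\sigma'},\Sigma)$ of $Y$, arranged (by Corollary 1 and the Marde\v{s}i\'{c} trick) so that $\Sigma$ is cofinite. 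Because $\boldsymbol{q}$ is a $\mathcal{C}$-expansion and each $R_{\sigma}\in Ob\mathcal{D}\subseteq Ob\mathcal{C}$, every $r_{\sigma}$ factors, essentially uniquely in $pro$-$\mathcal{C}$, as $r_{\sigma}=u_{\sigma}q_{\mu(\sigma)}$ for some $\mathcal{C}$-morphism $u_{\sigma}:Y_{\mu(\sigma)}\to R_{\sigma}$.

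Next I would transfer $\boldsymbol{H}=(H_{\mu})$ into a morphism $\boldsymbol{K}=(K_{\sigma}):X\to S^{J}(\boldsymbol{R})$ of $pro$-$Sh_{(\mathcal{C},\mathcal{D})}^{J}$ by setting $K_{\sigma}=S^{J}(u_{\sigma})\,H_{\mu(\sigma)}$. The required compatibility $K_{\sigma}=S^{J}(r_{\sigma\sigma'})K_{\sigma'}$ for $\sigma\leq\sigma'$ follows from the identity $r_{\sigma\sigma'}u_{\sigma'}q_{\mu(\sigma')}=r_{\sigma}=u_{\sigma}q_{\mu(\sigma)}$, the essential uniqueness of factorisations through $\boldsymbol{q}$, and the pro-compatibility $H_{\mu(\sigma)}=S^{J}(q_{\mu(\sigma)\mu_{0}})H_{\mu_{0}}$ at any common upper index $\mu_{0}\geq\mu(\sigma),\mu(\sigma')$ in $M$.

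The central technical step is to realise $\boldsymbol{K}$ by a single $J$-morphism $\boldsymbol{f}:\boldsymbol{X}\to\boldsymbol{R}$ of $pro^{J}$-$\mathcal{D}$. Lemma 7 supplies, for each $\sigma$, an index $\lambda(\sigma)\in\Lambda$ and a family of $\mathcal{D}$-morphisms $f_{\sigma}^{j}:X_{\lambda(\sigma)}\to R_{\sigma}$ with $K_{\sigma}$ induced by $(f_{\sigma}^{j}p_{\lambda(\sigma)})$. Lemma 5 lets me replace $\sigma\mapsto\lambda(\sigma)$ by an increasing function $f:\Sigma\to\Lambda$ and, after composing with the bonds of $\boldsymbol{X}$, obtain $\tilde f_{\sigma}^{j}:X_{f(\sigma)}\to R_{\sigma}$ still inducing $K_{\sigma}$. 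For each related pair $\sigma\leq\sigma'$, Lemma 8 applied to the pro-compatibility of $\boldsymbol{K}$ produces a common $\lambda\in\Lambda$ and a threshold $j\in J$ such that $\tilde f_{\sigma}^{j'}p_{f(\sigma)\lambda}=r_{\sigma\sigma'}\tilde f_{\sigma'}^{j'}p_{f(\sigma')\lambda}$ for every $j'\geq j$, which is precisely Definition 1. Hence $(f,\tilde f_{\sigma}^{j}):\boldsymbol{X}\to\boldsymbol{R}$ is a $J$-morphism and $F=\langle[(f,\tilde f_{\sigma}^{j})]\rangle$ is the sought $J$-shape morphism.

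Commutativity $\boldsymbol{H}=\boldsymbol{Q}F$ would be verified by unwinding the construction: $Q_{\mu}F$ composed with the bonds $S^{J}(r_{\sigma\sigma'})$ recovers $K_{\sigma}=S^{J}(u_{\sigma})H_{\mu(\sigma)}$, forcing $H_{\mu}=Q_{\mu}F$ in $Sh_{(\mathcal{C},\mathcal{D})}^{J}$ for every $\mu$. Uniqueness is automatic: any $F'$ with $\boldsymbol{Q}F'=\boldsymbol{H}$ satisfies $S^{J}(r_{\sigma})F'=S^{J}(u_{\sigma})H_{\mu(\sigma)}=K_{\sigma}$ for every $\sigma$, so $S^{J}(\boldsymbol{r})F=S^{J}(\boldsymbol{r})F'$ in $pro$-$Sh_{(\mathcal{C},\mathcal{D})}^{J}$, and $\boldsymbol{r}$ being a $\mathcal{D}$-expansion of $Y$ then forces $F=F'$. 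The main obstacle I expect is the third paragraph: the coherent choice of indices $\lambda(\sigma)$ and thresholds $j\in J$ across all pairs $\sigma\leq\sigma'$ so that the pointwise Lemma-7 data assembles into a genuine $J$-morphism. Cofiniteness of $\Sigma$, directedness of $\Lambda$ and $J$, and Lemma 8 cooperate here, and this is precisely where the enriched $J$-indexed framework must be deployed in place of the simpler $\{1\}$- and $\mathbb{N}$-indexed arguments of [24] and [19].
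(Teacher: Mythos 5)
Your overall architecture is the same as the paper's: choose a $\mathcal{D}$-expansion $\boldsymbol{r}:Y\rightarrow \boldsymbol{R}$, factor it through the $\mathcal{C}$-expansion $\boldsymbol{q}$ via a comparison morphism (your $(\mu (\cdot ),u_{\sigma })$, the paper's $(g,g_{\nu })$), transport $\boldsymbol{H}$ to $\boldsymbol{K}=(S^{J}(u_{\sigma })H_{\mu (\sigma )})$, and solve the lifting problem for the $\mathcal{D}$-expansion case (your third paragraph is essentially the paper's Lemma 9, assembled from Lemmata 5, 7, 8). The uniqueness argument is also fine. The genuine gap is in your fourth paragraph, and it is exactly where the paper spends most of its effort. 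From your construction you only know $S^{J}(r_{\sigma })F=K_{\sigma }=S^{J}(u_{\sigma })H_{\mu (\sigma )}$, i.e., $S^{J}(u_{\sigma })\bigl(S^{J}(q_{\mu (\sigma )})F\bigr)=S^{J}(u_{\sigma })H_{\mu (\sigma )}$ for each $\sigma $. Your claim that "unwinding" this forces $H_{\mu }=Q_{\mu }F$ amounts to cancelling $S^{J}(u_{\sigma })$, which is not legitimate: $S^{J}(u_{\sigma })$ is not a monomorphism, and the targets $Y_{\mu }$ are objects of $\mathcal{C}$ only, so two $J$-shape morphisms $X\rightarrow Y_{\mu }$ are compared through a $\mathcal{D}$-expansion of $Y_{\mu }$, not through a single map into a single $R_{\sigma }$. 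Moreover, your verification says nothing about indices $\mu \in M$ that are not of the form $\mu (\sigma )$.

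What is actually needed (and what the paper proves) is the intermediate statement that for every $\mu \in M$, every $P\in Ob\mathcal{D}$ and every $u\in \mathcal{C}(Y_{\mu },P)$ one has $S^{J}(u)S^{J}(q_{\mu })F=S^{J}(u)H_{\mu }$. This is obtained by factoring the rudimentary morphism $uq_{\mu }:Y\rightarrow P$ through $\boldsymbol{r}$ (since $\boldsymbol{r}$ is a $\mathcal{D}$-expansion), which, combined with the factorization of $\boldsymbol{r}$ through $\boldsymbol{q}$, yields an index $\mu ^{\prime }\geq \mu ,\mu (\sigma )$ with $uq_{\mu \mu ^{\prime }}=vu_{\sigma }q_{\mu (\sigma )\mu ^{\prime }}$ for a suitable $v:R_{\sigma }\rightarrow P$; one then runs a chain of identities using the pro-compatibility $H_{\mu }=S^{J}(q_{\mu \mu ^{\prime }})H_{\mu ^{\prime }}$ and the already established relations $S^{J}(r_{\sigma })F=S^{J}(u_{\sigma })H_{\mu (\sigma )}$. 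Finally one specializes $u$ to the members $q_{\alpha }^{\mu }$ of a $\mathcal{D}$-expansion $\boldsymbol{q}^{\mu }:Y_{\mu }\rightarrow \boldsymbol{Y}^{\mu }$, concluding that $S^{J}(q_{\mu })F$ and $H_{\mu }$ admit the same representing morphism $\boldsymbol{X}\rightarrow \boldsymbol{Y}^{\mu }$ in $pro^{J}$-$\mathcal{D}$ and hence coincide. Without this step (or an equivalent substitute) your proposal does not establish $\boldsymbol{H}=\boldsymbol{Q}F$, only its image under the non-invertible maps $S^{J}(u_{\sigma })$; the rest of your outline is sound and matches the paper.
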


The proof consists of two steps. In the first one we consider the special
case of a $\mathcal{D}$-expansion $\boldsymbol{q}:Y\rightarrow \boldsymbol{Y}
$.

\begin{lemma}
\label{L9}Let $\mathcal{D}$ be a pro-reflective subcategory of $\mathcal{C}$
and let $J$ be a directed partially ordered set. Let $X,Y\in ObC$, let $%
\boldsymbol{q}=(q_{\mu }):Y\rightarrow \boldsymbol{Y}=(Y_{\mu },q_{\mu \mu
^{\prime }},M)$ be a $\mathcal{D}$-expansion of $Y$ and let $\boldsymbol{H}%
=(H_{\mu }):X\rightarrow S^{J}(\boldsymbol{Y})$ be a morphism of $pro$-$Sh_{(%
\mathcal{C},\mathcal{D})}^{J}$. Then there exists a unique morphism $%
F:X\rightarrow Y$ of $Sh_{(\mathcal{C},\mathcal{D})}^{J}$ such that for
every $\mu \in M$, $H_{\mu }=S^{J}(q_{\mu })F$.
\end{lemma}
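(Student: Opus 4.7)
My plan is to build a representative $J$-morphism $\boldsymbol{f}\colon\boldsymbol{X}\rightarrow\boldsymbol{Y}$ in $pro^{J}$-$\mathcal{D}$ component by component. First I would fix a $\mathcal{D}$-expansion $\boldsymbol{p}=(p_{\lambda})\colon X\rightarrow\boldsymbol{X}=(X_{\lambda},p_{\lambda\lambda'},\Lambda)$ of $X$. For each $\mu\in M$, since $Y_{\mu}\in Ob\mathcal{D}$, Lemma 7 supplies a family $(\varphi_{\mu}^{j})_{j\in J}$ of $\mathcal{C}$-morphisms $\varphi_{\mu}^{j}\colon X\rightarrow Y_{\mu}$ inducing $H_{\mu}$ and uniformly factorizing through $\boldsymbol{p}$: there exist an index $f(\mu)\in\Lambda$ and $\mathcal{D}$-morphisms $f_{\mu}^{j}\colon X_{f(\mu)}\rightarrow Y_{\mu}$ with $\varphi_{\mu}^{j}=f_{\mu}^{j}p_{f(\mu)}$. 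Setting $\mu\mapsto f(\mu)$ yields the candidate index function $f\colon M\rightarrow\Lambda$, and the family $(f,f_{\mu}^{j})$ is the candidate representative.

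The main step is verifying that $(f,f_{\mu}^{j})\colon\boldsymbol{X}\rightarrow\boldsymbol{Y}$ is a $J$-morphism. Given $\mu\leq\mu'$ in $M$, the fact that $\boldsymbol{H}$ is a morphism of $pro$-$Sh_{(\mathcal{C},\mathcal{D})}^{J}$ gives $S^{J}(q_{\mu\mu'})H_{\mu'}=H_{\mu}$ in $Sh_{(\mathcal{C},\mathcal{D})}^{J}(X,Y_{\mu})$. The left side is induced by $(q_{\mu\mu'}\varphi_{\mu'}^{j})$ (composition of induced morphisms with a $\mathcal{D}$-morphism, as noted just before Lemma 8), while the right side is induced by $(\varphi_{\mu}^{j})$, and both families uniformly factorize through $\boldsymbol{p}$ via the data $(q_{\mu\mu'}f_{\mu'}^{j},f(\mu'))$ and $(f_{\mu}^{j},f(\mu))$ respectively. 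Lemma 8 then says they are almost equal, producing $\lambda\geq f(\mu),f(\mu')$ and $j\in J$ such that, for every $j'\geq j$,
\[
f_{\mu}^{j'}p_{f(\mu)\lambda}=q_{\mu\mu'}f_{\mu'}^{j'}p_{f(\mu')\lambda},
\]
which is exactly the $J$-morphism condition of Definition 1. This translation from ``almost equality of inducing families'' to ``$J$-morphism compatibility'' is the step I expect to require the most care; everything else is formal.

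I would then set $\boldsymbol{f}=[(f,f_{\mu}^{j})]$ and $F=\langle\boldsymbol{f}\rangle$, and check $S^{J}(q_{\mu})F=H_{\mu}$ for each $\mu\in M$. To this end, observe that $S^{J}(q_{\mu})$ is realized by the projection morphism $\boldsymbol{q}_{\mu}\colon\boldsymbol{Y}\rightarrow(Y_{\mu})$ of $pro^{J}$-$\mathcal{D}$ with index function sending $0\mapsto\mu$ and morphisms $1_{Y_{\mu}}$ (with respect to the $\mathcal{D}$-expansion $\boldsymbol{q}$ of $Y$ and the rudimentary expansion $\boldsymbol{1}\colon Y_{\mu}\rightarrow(Y_{\mu})$). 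Composing coordinatewise gives $\boldsymbol{q}_{\mu}\boldsymbol{f}=[(f(\mu),f_{\mu}^{j})]$, which is induced by $(f_{\mu}^{j}p_{f(\mu)})=(\varphi_{\mu}^{j})$; hence it represents $H_{\mu}$, as required.

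For uniqueness, suppose $F'=\langle\boldsymbol{f}'\rangle$ with $\boldsymbol{f}'=[(f',f_{\mu}^{\prime j})]$ also satisfies $H_{\mu}=S^{J}(q_{\mu})F'$ for every $\mu$. Running the previous paragraph in reverse, $H_{\mu}$ is induced both by $(f_{\mu}^{j}p_{f(\mu)})$ and by $(f_{\mu}^{\prime j}p_{f'(\mu)})$; by Lemma 8 these are almost equal, so for each $\mu\in M$ there exist $\lambda\geq f(\mu),f'(\mu)$ and $j\in J$ with $f_{\mu}^{j'}p_{f(\mu)\lambda}=f_{\mu}^{\prime j'}p_{f'(\mu)\lambda}$ for all $j'\geq j$. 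By Definition 2 this is $(f,f_{\mu}^{j})\sim(f',f_{\mu}^{\prime j})$, so $\boldsymbol{f}=\boldsymbol{f}'$ in $pro^{J}$-$\mathcal{D}$ and therefore $F=F'$, completing the argument.
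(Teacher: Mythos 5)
Your proposal is correct and follows essentially the same route as the paper: fix a $\mathcal{D}$-expansion $\boldsymbol{p}$ of $X$, represent each $H_{\mu}$ over it by a family $(f_{\mu}^{j})$ with index $f(\mu)$, use the compatibility $H_{\mu}=S^{J}(q_{\mu\mu'})H_{\mu'}$ to obtain exactly the $J$-morphism condition of Definition~1, and assemble $\boldsymbol{f}=[(f,f_{\mu}^{j})]$ into the required $F$. Your detour through Lemmas~7 and~8 (inducing families and almost equality) is just a repackaging of the paper's direct computation with the representatives $\boldsymbol{f}_{\mu}$ and the relation $\boldsymbol{f}_{\mu}=\boldsymbol{q}_{\mu\mu'}\boldsymbol{f}_{\mu'}$ in $pro^{J}$-$\mathcal{D}$, and your uniqueness argument is a spelled-out version of the paper's appeal to $\boldsymbol{q}$ being a $\mathcal{D}$-expansion.
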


\begin{proof}
Let $X,Y\in Ob\mathcal{C}$ and let $\boldsymbol{q}=(q_{\mu }):Y\rightarrow 
\boldsymbol{Y}=(Y_{\mu },q_{\mu \mu ^{\prime }},M)$ be a $\mathcal{D}$%
-expansion of $Y$. Let $\boldsymbol{H}=(H_{\mu }):X\rightarrow S^{J}(%
\boldsymbol{Y})$ be a morphism of $pro$-$Sh_{(\mathcal{C},\mathcal{D})}^{J}$
such that, for every related pair $\mu \leq \mu ^{\prime }$, $H_{\mu
}=S^{J}(q_{\mu \mu ^{\prime }})H_{\mu ^{\prime }}$. Let $\boldsymbol{p}%
=(p_{\lambda }):X\rightarrow \boldsymbol{X}=(X_{\lambda },p_{\lambda \lambda
^{\prime }},\Lambda )$ be a $\mathcal{D}$-expansion of $X$. Since every $%
Y_{\mu }\in Ob\mathcal{D}$, every $J$-shape morphism $H_{\mu }$ is
represented by a unique morphism $\boldsymbol{f}_{\mu }=[(f_{\mu }^{j})]:%
\boldsymbol{X}\rightarrow \left\lfloor Y_{\mu }\right\rfloor $ of $pro^{J}$-$%
\mathcal{D}(\boldsymbol{X},\left\lfloor Y_{\mu }\right\rfloor )$ ($%
\left\lfloor Y_{\mu }\right\rfloor $ is the rudimentary system associated
with $Y_{\mu }$), via the following diagram \smallskip

$%
\begin{array}{ccc}
\boldsymbol{X} & \overset{\boldsymbol{p}}{\leftarrow } & X \\ 
\boldsymbol{f}_{\mu }\downarrow &  & \downarrow H_{\mu } \\ 
\left\lfloor Y_{\mu }\right\rfloor & \underset{\boldsymbol{1}_{Y_{\mu }}}{%
\leftarrow } & Y_{\mu }%
\end{array}%
$, \smallskip

\noindent where $f_{\mu }^{j}:X_{\lambda (\mu )}\rightarrow Y_{\mu }$, $j\in
J$, are morphisms of $\mathcal{D}$. Further, since $H_{\mu }=S^{J}(q_{\mu
\mu ^{\prime }})H_{\mu ^{\prime }}$, $\mu \leq \mu ^{\prime }$, and $%
S^{J}(q_{\mu \mu ^{\prime }})$ is represented by $\boldsymbol{q}_{\mu \mu
^{\prime }}=[(q_{\mu \mu ^{\prime }}^{j}=q_{\mu \mu ^{\prime }})]$, i.e.,
\smallskip

$%
\begin{array}{ccc}
\left\lfloor Y_{\mu }\right\rfloor & \overset{\boldsymbol{1}_{Y_{\mu }}}{%
\leftarrow } & Y_{\mu } \\ 
\boldsymbol{q}_{\mu \mu ^{\prime }}\downarrow &  & \downarrow S^{J}(q_{\mu
\mu ^{\prime }}) \\ 
\left\lfloor Y_{\mu ^{\prime }}\right\rfloor & \underset{\boldsymbol{1}%
_{Y_{\mu ^{\prime }}}}{\leftarrow } & Y_{\mu ^{\prime }}%
\end{array}%
$, \smallskip

\noindent it follows that $\boldsymbol{f}_{\mu }=\boldsymbol{q}_{\mu \mu
^{\prime }}\boldsymbol{f}_{\mu ^{\prime }}$ in $pro^{J}$-$\mathcal{C}$, $\mu
\leq \mu ^{\prime }$. Thus, the following diagram in $pro^{J}$-$\mathcal{D}$
commutes \smallskip

$%
\begin{array}{ccc}
& \boldsymbol{X} &  \\ 
\cdots \text{ }\boldsymbol{f}_{\mu }\swarrow &  & \searrow \boldsymbol{f}%
_{\mu ^{\prime }}\text{ }\cdots \\ 
\cdots \text{ }\leftarrow \left\lfloor Y_{\mu }\right\rfloor & \underset{%
\boldsymbol{q}_{\mu \mu ^{\prime }}}{\leftarrow } & \left\lfloor Y_{\mu
^{\prime }}\right\rfloor \leftarrow \text{ }\cdots%
\end{array}%
$ \smallskip

\noindent This means that, for every pair $\mu \leq \mu ^{\prime }$, there
exist a $\lambda \geq \lambda (\mu ),\lambda (\mu ^{\prime })$ and a $j\in J$
such that, for every $j^{\prime }\geq j$, the following diagram in $\mathcal{%
D}$ commutes: \smallskip

$%
\begin{array}{cccc}
&  &  & X_{\lambda } \\ 
& p_{\lambda (\mu )\lambda }\swarrow &  & \swarrow p_{\lambda (\mu ^{\prime
})\lambda } \\ 
X_{\lambda (\mu )} &  & X_{\lambda (\mu ^{\prime })} &  \\ 
f_{\mu }^{j^{\prime }}\downarrow &  & \downarrow f_{\mu ^{\prime
}}^{j^{\prime }} &  \\ 
Y_{\mu } & \underset{q_{\mu \mu ^{\prime }}}{\leftarrow } & Y_{\mu ^{\prime
}} & 
\end{array}%
$. \smallskip

\noindent Let us define a function $f:M\rightarrow \Lambda $ by putting $%
f(\mu )=\lambda (\mu )$. Then the ordered pair $(f,(f_{\mu }^{j})_{\mu \in
M,j\in J})$ determines a $J$-morphism $(f,f_{\mu }^{j})$ of $\boldsymbol{X}$
to $\boldsymbol{Y}$ of $inv^{J}$-$\mathcal{D}$. Thus, the class $\boldsymbol{%
f}=[(f,f_{\mu }^{j})]:\boldsymbol{X}\rightarrow \boldsymbol{Y}$ is a
morphism of $pro^{J}$-$\mathcal{D}$. Since $\boldsymbol{p}:X\rightarrow 
\boldsymbol{X}$ and $\boldsymbol{q}:Y\rightarrow \boldsymbol{Y}$ are $%
\mathcal{D}$-expansions, the diagram \smallskip

$%
\begin{array}{ccc}
\boldsymbol{X} & \overset{\boldsymbol{p}}{\leftarrow } & X \\ 
\boldsymbol{f}\downarrow &  &  \\ 
\boldsymbol{Y} & \underset{\boldsymbol{q}}{\leftarrow } & Y%
\end{array}%
$ \smallskip

\noindent represents a unique $J$-shape morphism $F:X\rightarrow Y$. Notice
that, by construction,

$S^{J}(q_{\mu })F=H_{\mu }$

\noindent holds for every $\mu \in M$. Moreover, such an $F$ is unique
because $\boldsymbol{q}:Y\rightarrow \boldsymbol{Y}$ is a $\mathcal{D}$%
-expansion. Therefore, for every $X$, the correspondence $\boldsymbol{H}%
=(H_{\mu })\mapsto F$, induced by $\boldsymbol{q}$, defines a bijection of $%
pro$-$Sh_{(\mathcal{C},\mathcal{D})}^{J}(\left\lfloor X\right\rfloor ,%
\boldsymbol{Y})$ onto $Sh_{(\mathcal{C},\mathcal{D})}^{J}(X,Y)$.
\end{proof}

\begin{proof}
(of Theorem 6) Let $\boldsymbol{q}=(q_{\mu }):Y\rightarrow \boldsymbol{Y}%
=(Y_{\mu },q_{\mu \mu ^{\prime }},M)$ be a $\mathcal{C}$-expansion of $Y$.
Firstly, if $F:X\rightarrow Y$ is a morphism of $Sh_{(\mathcal{C},\mathcal{D}%
)}^{J}$, then all $F_{\mu }=S^{J}(q_{\mu })F$, $\mu \in M$, define a
morphism $\boldsymbol{H}=(F_{\mu }):\left\lfloor X\right\rfloor \rightarrow
S^{J}(\boldsymbol{Y})$ of $pro$-$Sh_{(\mathcal{C},\mathcal{D})}^{J}$,
because $F_{\mu }=S^{J}(q_{\mu \mu ^{\prime }})F_{\mu ^{\prime }}$, $\mu
\leq \mu ^{\prime }$. Conversely, let an $\boldsymbol{H}=(H_{\mu })\in pro$-$%
Sh_{(\mathcal{C},\mathcal{D})}^{J}(\left\lfloor X\right\rfloor ,S^{J}(%
\boldsymbol{Y}))$ be given. Choose any $\mathcal{D}$-expansion

$\boldsymbol{q}^{\prime }=(q_{\nu }^{\prime }):Y\rightarrow \boldsymbol{Y}%
^{\prime }=(Y_{\nu }^{\prime },q_{\nu \nu ^{\prime }}^{\prime },N)$

\noindent of $Y$ ($\mathcal{D}$ is a pro-reflective subcategory of $\mathcal{%
C}$!). Since $\boldsymbol{q}$ is a $\mathcal{C}$-expansion (with respect to $%
\mathcal{D}$), there exists a unique $\boldsymbol{g}:\boldsymbol{Y}%
\rightarrow \boldsymbol{Y}^{\prime }$ of $pro$-$\mathcal{C}$ such that $%
\boldsymbol{gq}=\boldsymbol{q}^{\prime }$. Let $(g,g_{\nu })$ be a
representative of $\boldsymbol{g}$ in $inv$-$\mathcal{C}$. For every $\nu
\in N$, denote by $G_{\nu }:Y_{g(\nu )}\rightarrow Y_{\nu }^{\prime }$ the
morphism of $Sh_{(\mathcal{C},\mathcal{D})}^{J}$ induced by $g_{\nu }$,
i.e., $G_{\nu }=S^{J}(g_{\nu })$. Similarly, denote $Q_{\nu }^{\prime
}=S^{J}(q_{\nu }^{\prime }):Y\rightarrow Y_{\nu }^{\prime }$ and $Q_{\nu \nu
^{\prime }}^{\prime }=S^{J}(q_{\nu \nu ^{\prime }}^{\prime }):Y_{\nu
^{\prime }}^{\prime }\rightarrow Y_{\nu }^{\prime }$, $\nu \leq \nu ^{\prime
}$. Then, since $(g,g_{\nu })$ is a morphism of $inv$-$\mathcal{C}$, one
readily sees that $(g,G_{\nu }):S^{J}(\boldsymbol{Y})\rightarrow S^{J}(%
\boldsymbol{Y}^{\prime })$ is a morphism of $inv$-$Sh_{(\mathcal{C},\mathcal{%
D})}^{J}$. Thus, the equivalence class $\boldsymbol{G}=[(g,G_{\nu })]:S^{J}(%
\boldsymbol{Y})\rightarrow S^{J}(\boldsymbol{Y}^{\prime })$ is a morphism of 
$pro$-$Sh_{(\mathcal{C},\mathcal{D})}^{J}$. Let $\boldsymbol{F}=(F_{\nu
}):\left\lfloor X\right\rfloor \rightarrow S^{J}(\boldsymbol{Y}^{\prime })$
of $pro$-$Sh_{(\mathcal{C},\mathcal{D})}^{J}$ be the composition of $%
\boldsymbol{H}$ and $\boldsymbol{G}$. Then $F_{\nu }=G_{\nu }H_{g(\nu )}$, $%
\nu \in N$, and $F_{\nu }=Q_{\nu \nu ^{\prime }}^{\prime }F_{\nu ^{\prime }}$%
, $\nu \leq \nu ^{\prime }$. By Lemma 9, there exists a unique $%
F:X\rightarrow Y$ of $Sh_{(\mathcal{C},\mathcal{D})}^{J}$ such that, for
every $\nu \in N$, $Q_{\nu }^{\prime }F=F_{\nu }$. This means that, for
every $\nu \in N$,

$Q_{\nu }^{\prime }F=G_{\nu }H_{g(\nu )}$, i.e.,

$S^{J}(q_{\nu }^{\prime })F=S^{J}(g_{\nu })H_{g(\nu )}$.

\noindent We have to prove that, for every $\mu \in M$, $S^{J}(q_{\mu
})F=H_{\mu }$ holds. Firstly, we will prove the following statement:

$(\forall \mu \in M)(\forall P\in Ob\mathcal{D})(\forall u\in \mathcal{C}%
(Y_{\mu },P)$

$S^{J}(u)S^{J}(q_{\mu })F=S^{J}(u)H_{\mu }$.

\noindent Notice that a $u:Y_{\mu }\rightarrow P$ of $\mathcal{C}$ yields a
unique $\boldsymbol{u}=[(u)]:\boldsymbol{Y}\rightarrow \left\lfloor
P\right\rfloor $ of $pro$-$\mathcal{C}$. Observe that $\boldsymbol{uq}$ is a
(rudimentary) morphism $uq_{\mu }:Y\rightarrow P$ belonging to $\mathcal{C}$%
. Since $\boldsymbol{q}^{\prime }$ is a $\mathcal{D}$-expansion and $P\in Ob%
\mathcal{D}$, there exists a unique $\boldsymbol{v}:\boldsymbol{Y}^{\prime
}\rightarrow \left\lfloor P\right\rfloor $ of $pro$-$\mathcal{D}$
(represented by a $v_{\nu }:Y_{\nu }^{\prime }\rightarrow P$ of $\mathcal{D}$%
) such that $\boldsymbol{vq}^{\prime }=\boldsymbol{uq}$. Then, $\boldsymbol{%
uq}=\boldsymbol{vgq}$, which implies that $\boldsymbol{u}=\boldsymbol{vg}$.
This means that there exists a $\mu ^{\prime }\geq \mu ,g(\nu )$ such that

$uq_{\mu \mu ^{\prime }}=vg_{\nu }q_{g(\nu )\mu ^{\prime }}$.

\noindent Now one calculates in a straightforward way that

$S^{J}(u)S^{J}(q_{\mu })F=$

\noindent $S^{J}(u)S^{J}(q_{\mu \mu ^{\prime }})S^{J}(q_{\mu ^{\prime
}})F=S^{J}(uq_{\mu \mu ^{\prime }})S^{J}(q_{\mu ^{\prime }})F=S^{J}(vg_{\nu
}q_{g(\nu )\mu ^{\prime }})S^{J}(q_{\mu ^{\prime }})F$

\noindent $=S^{J}(v)S^{J}(g_{\nu }q_{g(\nu )\mu ^{\prime }}q_{\mu ^{\prime
}})F=S^{J}(v)S^{J}(g_{\nu }q_{g(\nu )})F=S^{J}(v)S^{J}(q_{\nu }^{\prime })F$

\noindent $=S^{J}(v)S^{J}(g_{\nu })H_{g(\nu )}=S^{J}(v)S^{J}(g_{\nu
})S^{J}(q_{g(\nu )\mu ^{\prime }})H_{\mu ^{\prime }}=S^{J}(vg_{\nu }q_{g(\nu
)\mu ^{\prime }})H_{\mu ^{\prime }}$

\noindent $=S^{J}(uq_{\mu \mu ^{\prime }})H_{\mu ^{\prime
}}=S^{J}(u)S^{J}(q_{\mu \mu ^{\prime }})H_{\mu ^{\prime }}=S^{J}(u)H_{\mu }$,

\noindent which proves the statement. Given a $\mu \in M$, let

$\boldsymbol{q}^{\mu }=(q_{\alpha }^{\mu }):Y_{\mu }\rightarrow \boldsymbol{Y%
}^{\mu }=(Y_{\alpha }^{\mu },q_{\alpha \alpha ^{\prime }}^{\mu },A^{\mu })$

\noindent be a $\mathcal{D}$-expansion of $Y_{\mu }$. Then, by the above
statement, for every $\alpha \in A^{\mu }$,

$S^{J}(q_{\alpha }^{\mu })S^{J}(q_{\mu })F=S^{J}(q_{\alpha }^{\mu })H_{\mu }$%
.

\noindent According to the definition of the coarse shape category $Sh_{(%
\mathcal{C},\mathcal{D})}^{J}$, this means that the coarse shape morphisms

$S^{J}(q_{\mu })F,H_{\mu }:X\rightarrow Y_{\mu }$

\noindent admit the same representing morphism $\boldsymbol{f}:\boldsymbol{X}%
\rightarrow \boldsymbol{Y}^{\mu }$ of $pro^{J}$-$\mathcal{D}$. Thus,

$S^{J}(q_{\mu })F=H_{\mu }$.

\noindent Finally, such an $F$ is unique because

$S^{J}(q_{\mu })F=S^{J}(q_{\mu })F^{\prime }$, $\mu \in M$,

\noindent immediately implies

$S^{J}(q_{\nu }^{\prime })F=S^{J}(q_{\nu }^{\prime })F^{\prime }$, $\nu \in
N $,

\noindent which means that $F=F^{\prime }$.
\end{proof}

\section{A $J$-shape isomorphism}

In this section, we are going to establish an analogue of the well known
Morita lemma of [26], which should characterize a $J$-shape isomorphism in
an elegant and rather operative manner. According to the \textquotedblleft
reindexing theorem\textquotedblright\ (Theorem 2.) and definition of the
abstract $J$-shape category $Sh_{(\mathcal{C},\mathcal{D})}^{J}$, it
suffices to characterize an isomorphism $\boldsymbol{f}\in pro^{J}$-$%
\mathcal{D}(\boldsymbol{X},\boldsymbol{Y})$ which admits a level
representative $(1_{\Lambda },f_{\lambda }^{j}):\boldsymbol{X}\rightarrow 
\boldsymbol{Y}$ of $inv^{J}$-$\mathcal{D}$. In the case of inverse
sequences, a strictly increasing simple representative will do. Since the
characterization does not depend on the objects of $\mathcal{D}$, we shall
consider such an $\boldsymbol{f}$ of $pro^{J}$-$\mathcal{C}$ as well as the
special case of $tow^{\mathbb{N}}$-$\mathcal{C}$.

\begin{theorem}
\label{T7}Let $\mathcal{C}$ be a category and let $J$ be a directed
partially ordered set. Let $\boldsymbol{\boldsymbol{X}}=(X_{\lambda
},p_{\lambda \lambda ^{\prime }},\Lambda )$ and $\boldsymbol{Y}=(Y_{\lambda
},q_{\lambda \lambda ^{\prime }},\Lambda )$ be inverse systems in $\mathcal{C%
}$ over the same index set $\Lambda $ and let a morphism $\boldsymbol{f}:%
\boldsymbol{X}\rightarrow \boldsymbol{Y}$ of $pro^{J}$-$\mathcal{C}$ admit a
level representative $(1_{\Lambda },f_{\lambda }^{j})$. Then $\boldsymbol{f}$
is an isomorphism if and only if, for every $\lambda \in \Lambda $, there
exist a $\lambda ^{\prime }\geq \lambda $ and a $j_{\lambda }\in J$ such
that, for every $j\geq j_{\lambda }$, there exists a $\mathcal{C}$-morphism $%
h_{\lambda }^{j}:Y_{\lambda ^{\prime }}\rightarrow X_{\lambda }$ so that the
following diagram in $\mathcal{C}$ commutes:

$%
\begin{array}{ccc}
X_{\lambda } & \longleftarrow & X_{\lambda ^{\prime }} \\ 
f_{\lambda }^{j}\downarrow & h_{\lambda }^{j}\nwarrow & \downarrow
f_{\lambda ^{\prime }}^{j} \\ 
Y_{\lambda } & \longleftarrow & Y_{\lambda ^{\prime }}%
\end{array}%
$.
\end{theorem}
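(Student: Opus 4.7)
The plan is to establish both directions by extracting/constructing an inverse $\boldsymbol{g}\in pro^{J}$-$\mathcal{C}(\boldsymbol{Y},\boldsymbol{X})$, and translating between this $\boldsymbol{g}$ and the local data $(h_{\lambda}^{j})$. Throughout I will use the level-$J$-morphism property of the representative of $\boldsymbol{f}$, which, because $(1_{\Lambda},f_{\lambda}^{j})$ is simple, gives for each $\lambda\leq\lambda^{\prime}$ a $j_{\ast}$ with $f_{\lambda}^{j'}p_{\lambda\lambda'}=q_{\lambda\lambda'}f_{\lambda'}^{j'}$ for all $j'\geq j_{\ast}$.

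For necessity, suppose $\boldsymbol{g}=[(g,g_{\lambda}^{j})]$ is inverse to $\boldsymbol{f}$. Unfolding $\boldsymbol{gf}\sim 1_{\boldsymbol{X}}$ and $\boldsymbol{fg}\sim 1_{\boldsymbol{Y}}$ via Definition 2 gives, for each $\lambda$, upper bounds $\mu_{1},\mu_{2}\geq\lambda,g(\lambda)$ and indices $j_{1},j_{2}\in J$ with $g_{\lambda}^{j'}f_{g(\lambda)}^{j'}p_{g(\lambda)\mu_{1}}=p_{\lambda\mu_{1}}$ and $f_{\lambda}^{j'}g_{\lambda}^{j'}q_{g(\lambda)\mu_{2}}=q_{\lambda\mu_{2}}$ for all $j'$ past the corresponding threshold. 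Choose $\lambda^{\prime}\geq\mu_{1},\mu_{2}$ by directedness (the relations survive under further composition) and a single $j_{\lambda}\geq j_{1},j_{2}$ that also activates the level-$J$-morphism relation. Then $h_{\lambda}^{j}:=g_{\lambda}^{j}q_{g(\lambda)\lambda'}$ is the desired local inverse: the second triangle follows directly from $\boldsymbol{fg}\sim 1$, while the first rewrites as $h_{\lambda}^{j'}f_{\lambda'}^{j'}=g_{\lambda}^{j'}q_{g(\lambda)\lambda'}f_{\lambda'}^{j'}=g_{\lambda}^{j'}f_{g(\lambda)}^{j'}p_{g(\lambda)\lambda'}=p_{\lambda\lambda'}$ using the level property.

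For sufficiency, define $g:\Lambda\to\Lambda$ by $g(\lambda):=\lambda'$ and $g_{\lambda}^{j}:=h_{\lambda}^{j}$ for $j\geq j_{\lambda}$ (and arbitrarily, say $g_{\lambda}^{j}:=h_{\lambda}^{j_{\lambda}}$, otherwise). There are then three things to verify. First, the identities $\boldsymbol{gf}=1_{\boldsymbol{X}}$ and $\boldsymbol{fg}=1_{\boldsymbol{Y}}$ are direct: the composition $\boldsymbol{gf}$ is represented by $(g,h_{\lambda}^{j}f_{\lambda'}^{j})$, and putting $\lambda_{0}=\lambda'$, $j=j_{\lambda}$ in Definition 2 yields $h_{\lambda}^{j'}f_{\lambda'}^{j'}=p_{\lambda\lambda'}$, showing equivalence with $(1_{\Lambda},1_{X_{\lambda}}^{j})$; the other identity is symmetric.

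Second --- and this will be the main obstacle --- one must verify that $(g,g_{\lambda}^{j})$ is itself a $J$-morphism. A naive choice $\lambda_{3}\geq g(\lambda_{1}),g(\lambda_{2})$ only produces the desired equality after post-composing with $f_{\lambda_{3}}^{j'}$, i.e.\ ``one level too high''. The trick is to iterate once: choose $\mu\geq g(\lambda_{1}),g(\lambda_{2})$ in $\Lambda$ and set $\lambda_{3}:=g(\mu)$. Using the $J$-morphism identity $f_{g(\lambda_{i})}^{j'}p_{g(\lambda_{i})\mu}=q_{g(\lambda_{i})\mu}f_{\mu}^{j'}$ together with $h_{\lambda_{i}}^{j'}f_{g(\lambda_{i})}^{j'}=p_{\lambda_{i}g(\lambda_{i})}$, one obtains $h_{\lambda_{i}}^{j'}q_{g(\lambda_{i})\mu}f_{\mu}^{j'}=p_{\lambda_{i}\mu}$ for $i=1,2$ and $j'$ past all relevant thresholds; then composing on the right with $h_{\mu}^{j'}$ and using $f_{\mu}^{j'}h_{\mu}^{j'}=q_{\mu g(\mu)}=q_{\mu\lambda_{3}}$ collapses both sides of the required equation $h_{\lambda_{1}}^{j'}q_{g(\lambda_{1})\lambda_{3}}=p_{\lambda_{1}\lambda_{2}}h_{\lambda_{2}}^{j'}q_{g(\lambda_{2})\lambda_{3}}$ to the common value $p_{\lambda_{1}\mu}h_{\mu}^{j'}$. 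This ``shift by one application of $g$'' is exactly what repairs the gap left by the calculation at level $\mu$ alone.
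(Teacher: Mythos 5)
Your proposal is correct and follows essentially the same route as the paper: in the necessity direction you set $h_{\lambda }^{j}=g_{\lambda }^{j}q_{g(\lambda )\lambda ^{\prime }}$ from a representative of the inverse exactly as the paper does, and in the sufficiency direction you build $g$ from the local inverses and verify the $J$-morphism condition by the same device the paper uses, namely taking $\mu \geq g(\lambda _{1}),g(\lambda _{2})$ and evaluating at the index $g(\mu )$ (the paper's $\lambda _{1}=g(\lambda _{0})$), with both sides reducing to $p_{\lambda _{1}\mu }h_{\mu }^{j^{\prime }}$ just as in the paper's chain (3)--(7). The only difference is presentational (you collapse both sides to a common value rather than writing one linear chain), so nothing further is needed.
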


\begin{proof}
Let $\boldsymbol{f}:\boldsymbol{X}\rightarrow \boldsymbol{Y}$ be an
isomorphism of $pro^{J}$-$\mathcal{C}$ which admits a level representative $%
(1_{\Lambda },f_{\lambda }^{j})$. Let $\boldsymbol{f}^{-1}\equiv \boldsymbol{%
g}=[(g,g_{\lambda }^{j})]:\boldsymbol{Y}\rightarrow \boldsymbol{X}$ be the
inverse of $\boldsymbol{f}$, i.e.

$(g,g_{\lambda }^{j})(1_{\Lambda },f_{\lambda }^{j})\sim (1_{\Lambda
},1_{X_{\lambda }})$ $\wedge $ $(1_{\Lambda },f_{\lambda }^{j})(g,g_{\lambda
}^{j})\sim (1_{\Lambda },1_{Y_{\lambda }})$.

\noindent Given any $\lambda \in \Lambda $, choose $\lambda _{1}^{\prime
},\lambda _{2}^{\prime }\in \Lambda $ according to the above equivalence
relations. Then there exists a $\lambda ^{\prime }\geq \lambda _{1}^{\prime
},\lambda _{2}^{\prime }$. Thus $\lambda ^{\prime }\geq \lambda ,g(\lambda )$%
. Further, choose $j_{1},j_{2}\in \mathbb{N}$ according to the above
equivalence relations and the given $\lambda $. Since $(1_{\Lambda
},f_{\lambda }^{j})$ is an $J$-morphism, for the pair $g(\lambda )\leq
\lambda ^{\prime }$, there exists a $j_{3}\in J$ such that the appropriate
commutativity condition holds. Since $J$ is directed, there exists a $%
j_{\lambda }\geq j_{1},j_{2},j_{3}$. Let us define, for every $j\geq
j_{\lambda }$, a morphism $h_{\lambda }^{j}:Y_{\lambda ^{\prime
}}\rightarrow X_{\lambda }$ of $\mathcal{C}$ by putting

$h_{\lambda }^{j}=g_{\lambda }^{j}q_{g(\lambda )\lambda ^{\prime }}$.

\noindent We are to prove that the needed diagram ommutes. Firstly,
according to the second equivalence relation,

$f_{\lambda }^{j}h_{\lambda }^{j}=f_{\lambda }^{j}g_{\lambda
}^{j}q_{g(\lambda )\lambda ^{\prime }}=q_{\lambda \lambda ^{\prime }}$.

\noindent Thus, the left (lower) triangle in the diagram commutes. Further,
since $j\geqslant j_{3}$,

$h_{\lambda }^{j}f_{\lambda ^{\prime }}^{j}=g_{\lambda }^{j}q_{g(\lambda
)\lambda ^{\prime }}f_{\lambda ^{\prime }}^{j}=g_{\lambda }^{j}f_{g(\lambda
)}^{j}p_{g(\lambda )\lambda ^{\prime }}$,

\noindent while, according to the first equivalence relation,

$g_{\lambda }^{j}f_{g(\lambda )}^{j}p_{g(\lambda )\lambda ^{\prime
}}=p_{\lambda \lambda ^{\prime }}$.

\noindent Therefore,

$h_{\lambda }^{j}f_{\lambda ^{\prime }}^{j}=p_{\lambda \lambda ^{\prime }}$,

\noindent which proves commutativity of the right (upper) triangle in the
diagram.

\noindent Conversely, suppose that a morphism $\boldsymbol{f}=[(1_{\Lambda
},f_{\lambda }^{j})]:\boldsymbol{X}\rightarrow \boldsymbol{Y}$ of $pro^{J}$-$%
\mathcal{C}$ fulfils the condition of the theorem. Let $g:\Lambda
\rightarrow \Lambda $ be defined by that condition, i.e., for each $\lambda $%
, choose and fix a $g(\lambda )=\lambda ^{\prime }\geq \lambda $ by the
condition. Further, for each $\lambda \in \Lambda $, choose amd fix a $%
j_{\lambda }\in J$ by the same condition. Let us define, for each $\lambda
\in \Lambda $ and every $j\in J$, a morphism $g_{\lambda }^{j}:Y_{g(\lambda
)}\rightarrow X_{\lambda }$ of $\mathcal{C}$ by putting

$g_{\lambda }^{j}=\left\{ 
\begin{array}{c}
h_{\lambda }^{j_{\lambda }};\text{ }j\ngeq j_{\lambda } \\ 
h_{\lambda }^{j};\text{ }j\geqslant j_{\lambda }%
\end{array}%
\right. $,

\noindent where $h_{\lambda }^{j}$ comes from the condition. We have to
prove that $\left( g,g_{\lambda }^{j}\right) :\boldsymbol{Y}\rightarrow 
\boldsymbol{X}$ is a $J$-morphism. Let a pair $\lambda \leq \lambda ^{\prime
}$ be given. Choose a $\lambda _{0}\geq g(\lambda ),g(\lambda ^{\prime })$
and put $\lambda _{1}=g(\lambda _{0})$. Since $(1_{\Lambda },f_{\lambda
}^{j})$ is a $J$-morphism, for the pairs $g\left( \lambda \right) \leq
\lambda _{0}$ and $g\left( \lambda ^{\prime }\right) \leq \lambda _{0}$,
there exist $j_{1},j_{2}\in J$ such that the appropriate commutativity
conditions hold respectively. Since $J$ is directed, there exists a

$j\geq j_{\lambda },j_{\lambda ^{\prime }},j_{\lambda _{0}},j_{1},j_{2}$.

\noindent Now, for every $j^{\prime }\geq j$, consider the following
corresponding diagram:%
\begin{equation}
\begin{tabular}{lllllllll}
$X_{\lambda }$ & $\longleftarrow $ & $X_{\lambda ^{\prime }}$ & $%
\longleftarrow $ & $X_{g(\lambda ^{\prime })}$ &  &  &  &  \\ 
& $\nwarrow $ &  & $\nwarrow $ & $\downarrow $ & $\nwarrow $ &  &  &  \\ 
&  & $X_{g(\lambda )}$ &  & $\longleftarrow $ &  & $X_{\lambda _{0}}$ &  & 
\\ 
&  & $\downarrow $ &  & $Y_{g(\lambda ^{\prime })}$ &  & $\downarrow $ &  & 
\\ 
& $\nwarrow $ &  &  &  & $\nwarrow $ &  & $\nwarrow $ &  \\ 
&  & $Y_{g(\lambda )}$ &  & $\longleftarrow $ &  & $Y_{\lambda _{0}}$ & $%
\longleftarrow $ & $Y_{\lambda _{1}}$%
\end{tabular}%
\text{. }  \tag{1}
\end{equation}%
\noindent We shall prove, by chasing diagram $(1)$, that 
\begin{equation}
g_{\lambda }^{j^{\prime }}q_{g(\lambda )\lambda _{1}}=p_{\lambda \lambda
^{\prime }}g_{\lambda ^{\prime }}^{j^{\prime }}q_{g(\lambda ^{\prime
})\lambda _{1}}\text{.}  \tag{2}
\end{equation}%
\noindent Since $j^{\prime }\geqslant j_{\lambda _{0}},$ the condition of
the theorem implies 
\begin{equation}
g_{\lambda }^{j^{\prime }}q_{g(\lambda )\lambda _{1}}=h_{\lambda
}^{j^{\prime }}q_{g(\lambda )\lambda _{0}}f_{\lambda _{0}}^{j^{\prime
}}h_{\lambda _{0}}^{j^{\prime }}\text{. }  \tag{3}
\end{equation}%
\noindent Since $j^{\prime }\geqslant j_{1}$, 
\begin{equation}
h_{\lambda }^{j^{\prime }}q_{g(\lambda )\lambda _{0}}f_{\lambda
_{0}}^{j^{\prime }}h_{\lambda _{0}}^{j^{\prime }}=h_{\lambda }^{j^{\prime
}}f_{g(\lambda )}^{j^{\prime }}p_{g(\lambda )\lambda _{0}}h_{\lambda
_{0}}^{j^{\prime }}\text{. }  \tag{4}
\end{equation}%
\noindent Since $j^{\prime }\geqslant j_{\lambda },j_{\lambda ^{\prime }},$
the condition of the theorem implies 
\begin{equation}
h_{\lambda }^{j^{\prime }}f_{g(\lambda )}^{j^{\prime }}p_{g(\lambda )\lambda
_{0}}h_{\lambda _{0}}^{j^{\prime }}=p_{\lambda \lambda _{0}}h_{\lambda
_{0}}^{j^{\prime }}=p_{\lambda \lambda ^{\prime }}h_{\lambda ^{\prime
}}^{j^{\prime }}f_{g(\lambda ^{\prime })}^{j^{\prime }}p_{g(\lambda ^{\prime
})\lambda _{0}}h_{\lambda _{0}}^{j^{\prime }}.  \tag{5}
\end{equation}%
\noindent Since $j^{\prime }\geqslant j_{2}$, 
\begin{equation}
p_{\lambda \lambda ^{\prime }}h_{\lambda ^{\prime }}^{j^{\prime
}}f_{g(\lambda ^{\prime })}^{j^{\prime }}p_{g(\lambda ^{\prime })\lambda
_{0}}h_{\lambda _{0}}^{j^{\prime }}=p_{\lambda \lambda ^{\prime }}h_{\lambda
^{\prime }}^{j^{\prime }}q_{g(\lambda ^{\prime })\lambda _{0}}f_{\lambda
_{0}}^{j^{\prime }}h_{\lambda _{0}}^{j^{\prime }}\text{. }  \tag{6}
\end{equation}%
\noindent Finally, since $j^{\prime }\geqslant j_{\lambda _{0}}$, the
condition of the theorem implies 
\begin{equation}
p_{\lambda \lambda ^{\prime }}h_{\lambda ^{\prime }}^{j^{\prime
}}q_{g(\lambda ^{\prime })\lambda _{0}}f_{\lambda _{0}}^{j^{\prime
}}h_{\lambda _{0}}^{j^{\prime }}=p_{\lambda \lambda ^{\prime }}h_{\lambda
^{\prime }}^{j^{\prime }}q_{g(\lambda ^{\prime })g(\lambda _{0})}=p_{\lambda
\lambda ^{\prime }}g_{\lambda ^{\prime }}^{j^{\prime }}q_{g(\lambda ^{\prime
})\lambda _{1}}\text{. }  \tag{11}
\end{equation}%
\noindent Now, by combining $\left( 3\right) ,$ $\left( 4\right) ,$ $\left(
5\right) $, $\left( 6\right) $ and $\left( 7\right) $, one establishes $%
\left( 2\right) $, which proves that $\left( g,g_{\lambda }^{j}\right) $ is
a $J$-morphism. Moreover, by the condition of the theorem, it is readily
seen that, for each $\lambda \in \Lambda $ and every $j^{\prime }\in J$, $%
j^{\prime }\geqslant j_{\lambda }$, 
\begin{equation*}
\text{ }g_{\lambda }^{j^{\prime }}f_{g(\lambda )}^{j^{\prime }}=h_{\lambda
}^{j^{\prime }}f_{g(\lambda )}^{j^{\prime }}=p_{\lambda g(\lambda )}\text{ }%
\wedge \text{ }f_{\lambda }^{j^{\prime }}g_{\lambda }^{j^{\prime
}}=f_{\lambda }^{j^{\prime }}h_{\lambda }^{j^{\prime }}=q_{\lambda g(\lambda
)}\text{. }
\end{equation*}%
\noindent This shows that 
\begin{equation*}
(g,g_{\lambda }^{j})(1_{\Lambda },f_{\lambda }^{j})\sim (1_{\Lambda
},1_{X_{\lambda }})\text{ }\wedge \text{ }(1_{\Lambda },f_{\lambda
}^{j})(g,g_{\lambda }^{j})\sim (1_{\Lambda },1_{Y_{\lambda }})\text{, }
\end{equation*}%
\noindent which means that $\boldsymbol{g}=[(g,g_{\lambda }^{j})]:%
\boldsymbol{Y}\rightarrow \boldsymbol{X}$ is the inverse of $\boldsymbol{f}$%
. Therefore, $\boldsymbol{f}$ is an isomorphism of $pro^{J}$-$\mathcal{C}$.
\end{proof}

\begin{remark}
\label{R3}Since $pro$-$\mathcal{C}=pro^{\{1\}}$-$\mathcal{C}$, the original
Morita lemma is the simpleast case of Theorem 7. Further, since the coarse
shape category $Sh_{(\mathcal{C},\mathcal{D})}^{\ast }$ is the $\mathbb{N}$%
-shape category Sh$_{(\mathcal{C},\mathcal{D})}^{\mathbb{N}}$, Theorem 7 is
a generalization of [19], Theorem 6.1.
\end{remark}

One can easily verify that the condition (of Theorem 7) characterizing an
isomorphism may be reduced to a cofinal subset $\Lambda ^{\prime }\subseteq
\Lambda $. Thus, the following corollary holds.

\begin{corollary}
\label{C4}If an $\boldsymbol{f}=[(1_{\Lambda },f_{\lambda }^{j})]:%
\boldsymbol{X}\rightarrow \boldsymbol{Y}$ of $pro^{J}$-$\mathcal{C}$ admits
a cofinal subset $\Lambda ^{\prime }\subseteq \Lambda $ such that, for every 
$\lambda ^{\prime }\in \Lambda ^{\prime }$, there exists a $j\in J$, so
that, for every $j^{\prime }\geq j$, $f_{\lambda ^{\prime }}^{j^{\prime }}$
is an isomorphism of $\mathcal{C}$, then $\boldsymbol{f}$ is an isomorphism.
\end{corollary}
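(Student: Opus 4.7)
The plan is to reduce to the case $\Lambda =\Lambda ^{\prime }$ by cofinal restriction of the inverse systems, and then invoke Theorem 7 in a completely trivial way.

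First, I would form the restricted inverse systems $\boldsymbol{X}^{\prime }=(X_{\lambda },p_{\lambda \lambda ^{\prime }},\Lambda ^{\prime })$ and $\boldsymbol{Y}^{\prime }=(Y_{\lambda },q_{\lambda \lambda ^{\prime }},\Lambda ^{\prime })$ over $\Lambda ^{\prime }$, which is directed because it is cofinal in the directed set $\Lambda $. I would then verify that $(1_{\Lambda ^{\prime }},f_{\lambda }^{j})_{\lambda \in \Lambda ^{\prime }}$ is a level $J$-morphism $\boldsymbol{X}^{\prime }\rightarrow \boldsymbol{Y}^{\prime }$: for any pair $\lambda _{1}\leq \lambda _{2}$ in $\Lambda ^{\prime }$, the hypothesis that $(1_{\Lambda },f_{\lambda }^{j})$ is a $J$-morphism provides $\xi \in \Lambda $ and $j_{0}\in J$ fulfilling the defining commutativity, and cofinality lets me replace $\xi $ by some $\xi ^{\prime }\in \Lambda ^{\prime }$, $\xi ^{\prime }\geq \xi $, the equality persisting at $\xi ^{\prime }$ by the remark after Definition 1. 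Thus $\boldsymbol{f}^{\prime }:=[(1_{\Lambda ^{\prime }},f_{\lambda }^{j})]$ is a well-defined morphism of $pro^{J}$-$\mathcal{C}$.

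Next, I would apply Theorem 7 to $\boldsymbol{f}^{\prime }$ on the index set $\Lambda ^{\prime }$. For each $\lambda \in \Lambda ^{\prime }$, the hypothesis of the corollary supplies $j_{\lambda }\in J$ with $f_{\lambda }^{j^{\prime }}$ invertible in $\mathcal{C}$ whenever $j^{\prime }\geq j_{\lambda }$; taking $\lambda $ itself as the larger index in Theorem 7 and $h_{\lambda }^{j^{\prime }}:=(f_{\lambda }^{j^{\prime }})^{-1}$, both required triangle equalities reduce to $1_{X_{\lambda }}=p_{\lambda \lambda }$ and $1_{Y_{\lambda }}=q_{\lambda \lambda }$, which hold tautologically. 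Hence $\boldsymbol{f}^{\prime }$ is an isomorphism of $pro^{J}$-$\mathcal{C}$.

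Finally, I would transport the isomorphism back. The classical cofinal reindexing yields isomorphisms $\boldsymbol{\iota }_{X}:\boldsymbol{X}\rightarrow \boldsymbol{X}^{\prime }$ and $\boldsymbol{\iota }_{Y}:\boldsymbol{Y}\rightarrow \boldsymbol{Y}^{\prime }$ in $pro$-$\mathcal{C}$ (with index function the inclusion $\Lambda ^{\prime }\hookrightarrow \Lambda $ and identity components), and via the functor $\underline{I}$ of Theorem 1 they remain isomorphisms in $pro^{J}$-$\mathcal{C}$. A direct inspection of representatives shows $\boldsymbol{\iota }_{Y}\boldsymbol{f}=\boldsymbol{f}^{\prime }\boldsymbol{\iota }_{X}$ in $pro^{J}$-$\mathcal{C}$, whence $\boldsymbol{f}=\boldsymbol{\iota }_{Y}^{-1}\boldsymbol{f}^{\prime }\boldsymbol{\iota }_{X}$ is an isomorphism. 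The main conceptual obstacle I anticipate is that a direct verification of Theorem 7's condition for $\lambda \notin \Lambda ^{\prime }$---for instance by setting $h_{\lambda }^{j}=p_{\lambda \mu }h_{\mu }^{j}$ for some $\mu \in \Lambda ^{\prime }$ above $\lambda $---stalls on the second triangle identity, which demands $f_{\lambda }^{j}p_{\lambda \mu }=q_{\lambda \mu }f_{\mu }^{j}$ at the fixed level $\mu $, whereas the $J$-morphism axiom produces this only at a strictly higher index; the cofinal restriction neatly sidesteps this difficulty.
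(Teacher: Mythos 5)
Your proof is correct, but it follows a different mechanism than the paper. The paper offers no written proof beyond the remark that the condition of Theorem 7 ``may be reduced to a cofinal subset'': i.e.\ one is meant to verify the Morita-type condition for the original $\boldsymbol{f}$ over all of $\Lambda$ knowing it only at indices in $\Lambda'$ (where it holds trivially with $\lambda'=\lambda$ and $h_{\lambda}^{j}=(f_{\lambda}^{j})^{-1}$). You instead restrict the systems themselves: you check that $(1_{\Lambda'},f_{\lambda}^{j})_{\lambda\in\Lambda'}$ is a level $J$-morphism $\boldsymbol{X}'\rightarrow\boldsymbol{Y}'$ (correct, using the persistence of the defining equality at larger indices and cofinality of $\Lambda'$), apply Theorem 7 trivially over $\Lambda'$, and then conjugate by the canonical restriction morphisms, which are isomorphisms of $pro$-$\mathcal{C}$ and hence, via the functor $\underline{I}$, of $pro^{J}$-$\mathcal{C}$; the square $\boldsymbol{\iota}_{Y}\boldsymbol{f}=\boldsymbol{f}'\boldsymbol{\iota}_{X}$ indeed holds on the nose at the level of representatives $(\iota,f_{\lambda}^{j})$. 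What your route buys is that it avoids the step the paper leaves implicit, namely propagating the Theorem 7 condition from $\Lambda'$ to an arbitrary $\lambda\in\Lambda$, which (as you rightly observe) is not immediate: the naive $h_{\lambda}^{j}=p_{\lambda\mu}h_{\mu}^{j}$ fails the lower triangle because level-compatibility $f_{\lambda}^{j}p_{\lambda\mu}=q_{\lambda\mu}f_{\mu}^{j}$ holds only after composing with a bond to a strictly higher index; completing the paper's route requires also enlarging the companion index and a diagram chase of the same kind as equations $(1)$--$(7)$ in the proof of Theorem 7. The price of your route is the appeal to the classical fact that restriction to a cofinal subsystem is an isomorphism in $pro$-$\mathcal{C}$ (Marde\v{s}i\'{c}--Segal), plus functoriality of $\underline{I}$; both are available in the paper, so the argument is complete.
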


For the sake of completeness and unifying notations, we include hereby
Theorem 6.4 of [19] (see also [25], Section 2) concerning the special case
of inverse sequences and $J=\mathbb{N}$. It is very useful, for instance, in
detecting an $\mathbb{N}$-shape (i.e., a coarse shape) isomorphism of
metrizable compacta (i.e., in the case $(\mathcal{C},\mathcal{D}%
)=(HcM,HcPol) $).

\begin{theorem}
\label{T8}Let $\boldsymbol{\boldsymbol{X}}=(X_{n},p_{nn^{\prime }})$ and $%
\boldsymbol{Y}=(Y_{m},q_{mm^{\prime \prime }})$ be inverse sequences in a
category $\mathcal{C}$, let $\boldsymbol{f}:\boldsymbol{X}\rightarrow 
\boldsymbol{Y}$ be a morphism of $tow^{\mathbb{N}}$-$\mathcal{C}$ and let $%
(f,f_{m}^{j})$ be any simple representative of $\boldsymbol{f}$ with a
commutativity radius $\gamma $ and $f$ strictly increasing. If for every $%
j\in \mathbb{N}$ and every $m=1,\ldots ,\gamma (j)-1$, there exists a $%
\mathcal{C}$-morphism $h_{f(m)}^{j}:Y_{m+1}\rightarrow X_{f(m)}$ such that
the diagram

$%
\begin{array}{ccc}
X_{f(m)} & \longleftarrow & X_{f(m+1)} \\ 
f_{m}^{j}\downarrow & h_{f(m)}^{j}\nwarrow & \downarrow f_{m+1}^{j} \\ 
Y_{m} & \longleftarrow & Y_{m+1}%
\end{array}%
$

\noindent in $\mathcal{C}$ commutes, then $\boldsymbol{f}$ is an isomorphism
of $tow^{\mathbb{N}}$-$\mathcal{C}$.

\noindent Conversely, if $\boldsymbol{f}$ is an isomorphism of $tow^{\mathbb{%
N}}$-$\mathcal{C}$, then, for every $m\in \mathbb{N}$, there exist an $%
m^{\prime }\geq m$ and a $j\in \mathbb{N}$ such that, for every $j^{\prime
}\geq j$, there exists a $\mathcal{C}$-morphism $h_{f(m)}^{j^{\prime
}}:Y_{m^{\prime }}\rightarrow X_{f(m)}$ so that the following diagram

$%
\begin{array}{ccc}
X_{f(m)} & \longleftarrow & X_{f(m^{\prime })} \\ 
f_{m}^{j^{\prime }}\downarrow & h_{f(m)}^{j^{\prime }}\nwarrow & \downarrow
f_{m^{\prime }}^{j^{\prime }} \\ 
Y_{m} & \longleftarrow & Y_{m^{\prime }}%
\end{array}%
$

\noindent in $\mathcal{C}$ commutes.
\end{theorem}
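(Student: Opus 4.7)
The plan is to reduce Theorem 8 to Theorem 7 by passing to a level representative through reindexing. Since the simple representative $(f,f_m^j)$ has $f:\mathbb{N}\to\mathbb{N}$ strictly increasing, the image $f[\mathbb{N}]$ is cofinal in $\mathbb{N}$, so the subsystem $\boldsymbol{X}'=(X_{f(m)},p_{f(m)f(m')},\mathbb{N})$ is isomorphic to $\boldsymbol{X}$ in $tow$-$\mathcal{C}$ via the canonical morphism induced by inclusion of the cofinal subsequence. By Theorem 1, applying $\underline{I}$ yields a corresponding isomorphism $\boldsymbol{i}:\boldsymbol{X}\to\boldsymbol{X}'$ in $tow^{\mathbb{N}}$-$\mathcal{C}$. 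A direct check shows that $(1_{\mathbb{N}},f_m^j)$ is then a level $\mathbb{N}$-morphism $\boldsymbol{X}'\to\boldsymbol{Y}$: the $\mathbb{N}$-morphism axiom for $(1_{\mathbb{N}},f_m^j)$ follows at once from that for the simple $(f,f_m^j)$, since the bonding maps of $\boldsymbol{X}'$ are precisely the $p_{f(m)f(m')}$ appearing in the simple axiom. Writing $\boldsymbol{f}'=\boldsymbol{f}\boldsymbol{i}^{-1}$, we have that $\boldsymbol{f}$ is an isomorphism of $tow^{\mathbb{N}}$-$\mathcal{C}$ if and only if $\boldsymbol{f}'$ is; so the characterization follows by applying Theorem 7 to the level morphism $(1_{\mathbb{N}},f_m^j):\boldsymbol{X}'\to\boldsymbol{Y}$.

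For the sufficient direction, suppose the $h_{f(m)}^j:Y_{m+1}\to X_{f(m)}$ are given for every $j\in\mathbb{N}$ and every $m=1,\ldots,\gamma(j)-1$, making the triangles commute. The commutativity radius $\gamma$ of a simple $\mathbb{N}$-morphism is, by construction, a cofinal function $\mathbb{N}\to\mathbb{N}$ (for every $m$, some $j$ satisfies $\gamma(j)>m$, because otherwise the defining property of a simple $\mathbb{N}$-morphism would fail for $m\leq m+1$). Hence for each $m\in\mathbb{N}$ one may fix $j_m\in\mathbb{N}$ with $\gamma(j)\geq m+2$ for all $j\geq j_m$; then for every such $j$ the hypothesis supplies $h_{f(m)}^j:Y_{m+1}\to X_{f(m)}$ with $f_m^j h_{f(m)}^j=q_{m,m+1}$ and $h_{f(m)}^j f_{m+1}^j=p_{f(m)f(m+1)}$. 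This is exactly the condition of Theorem 7 with $\lambda=m$, $\lambda'=m+1$ and $j_\lambda=j_m$, so $\boldsymbol{f}'$, and hence $\boldsymbol{f}$, is an isomorphism of $tow^{\mathbb{N}}$-$\mathcal{C}$.

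For the converse, suppose $\boldsymbol{f}$, equivalently $\boldsymbol{f}'$, is an isomorphism of $tow^{\mathbb{N}}$-$\mathcal{C}$. Since $\boldsymbol{f}'$ has the level representative $(1_{\mathbb{N}},f_m^j)$, Theorem 7 applied with $\Lambda=\mathbb{N}$ yields: for every $m\in\mathbb{N}$ there exist $m'\geq m$ and $j\in\mathbb{N}$ such that for every $j'\geq j$ there is a $\mathcal{C}$-morphism $h_{f(m)}^{j'}:Y_{m'}\to X_{f(m)}$ making
\[
\begin{array}{ccc}
X_{f(m)} & \longleftarrow & X_{f(m')} \\
f_m^{j'}\downarrow & h_{f(m)}^{j'}\nwarrow & \downarrow f_{m'}^{j'} \\
Y_m & \longleftarrow & Y_{m'}
\end{array}
\]
commute in $\mathcal{C}$, which is precisely the required conclusion.

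The main obstacle will be the first direction, and specifically the passage from the ``radial'' hypothesis (an $h_{f(m)}^j$ for each pair $(j,m)$ with $m<\gamma(j)$) to the ``Theorem 7'' form (for each $m$, some $j_m$ and all $j\geq j_m$ yield $h$). This hinges on unwinding the precise meaning of the commutativity radius $\gamma$ and on the cofinality of $\gamma$ in $\mathbb{N}$, which must be extracted from the definition of a simple $\mathbb{N}$-morphism together with the directedness of $\mathbb{N}$. Once this translation is made, the rest is a direct transfer of Theorem 7 through the canonical reindexing isomorphism $\boldsymbol{i}:\boldsymbol{X}\to\boldsymbol{X}'$, with no new calculations required.
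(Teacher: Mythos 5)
You should first note what you are (not) being compared against: the paper gives no proof of Theorem 8 at all — it is quoted from [19] (Theorem 6.4) ``for the sake of completeness and unifying notations'' — so your argument has to stand on its own. Structurally it does, and it follows exactly the reduction the paper itself hints at just before Theorem 7 (``In the case of inverse sequences, a strictly increasing simple representative will do''): since $f$ is strictly increasing, $\boldsymbol{X}'=(X_{f(m)},p_{f(m)f(m')},\mathbb{N})$ is canonically isomorphic to $\boldsymbol{X}$ in $tow$-$\mathcal{C}$, hence in $tow^{\mathbb{N}}$-$\mathcal{C}$ (what you need here is only that $\underline{I}$ is a functor, not Theorem 1, which asserts faithfulness); $(1_{\mathbb{N}},f_{m}^{j})$ is a level representative of $\boldsymbol{f}'=\boldsymbol{f}\boldsymbol{i}^{-1}$, and the identity $\boldsymbol{f}=\boldsymbol{f}'\boldsymbol{i}$ checks out against the composition rule of $inv^{J}$-$\mathcal{C}$. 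The converse direction is then a literal transcription of Theorem 7, as you say.

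The one step that does not hold as written is your treatment of the commutativity radius $\gamma$ (a term this paper never defines). From ``for every $m$ some $j$ satisfies $\gamma(j)>m$'' you cannot conclude ``fix $j_{m}$ with $\gamma(j)\geq m+2$ for all $j\geq j_{m}$'': that needs $\gamma(j)\rightarrow\infty$, e.g. $\gamma$ increasing and unbounded, not mere cofinality. Your parenthetical justification is also off target: simplicity of $(f,f_{m}^{j})$ guarantees that \emph{some} increasing unbounded commutativity radius exists, not that an arbitrary function witnessing commutativity up to $\gamma(j)$ must be unbounded or monotone. And the point is not cosmetic: if merely cofinal $\gamma$ were allowed, the sufficiency half of Theorem 8 would be false. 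Take $X_{n}=Y_{n}=\mathbb{Z}$ in abelian groups with all bonding morphisms the identity, $f=1_{\mathbb{N}}$, $f_{m}^{j}=1$ for $j$ even and $f_{m}^{j}=0$ for $j$ odd, and $\gamma(j)=j$ for $j$ even, $\gamma(j)=1$ for $j$ odd; the hypothesis of the theorem is then satisfied (with $h_{f(m)}^{j}=1$ for even $j$, vacuously for odd $j$), yet $[(1_{\mathbb{N}},f_{m}^{j})]$ has no inverse in $tow^{\mathbb{N}}$-$\mathcal{C}$, since any candidate inverse would force $g_{m}^{j}f_{g(m)}^{j}=1$ for all large $j$, impossible for odd $j$. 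The repair is simply to invoke the definition from [19]: a commutativity radius is an \emph{increasing unbounded} function $\gamma$ with $f_{m}^{j}p_{f(m)f(m')}=q_{mm'}f_{m'}^{j}$ for $m\leq m'\leq\gamma(j)$; with monotonicity your choice of $j_{m}$ is legitimate, and the rest of your reduction to Theorem 7 goes through.
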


\begin{remark}
\label{R4}We give no additional example but those of [13], [17] and [19],
though one can, by means of them, easily construct some with $J=(\mathbb{N}%
,\leq ^{\prime })$ (for instance, in the case of $m\leq ^{\prime }n$ iff $%
\frac{n}{m}\in \mathbb{N}$) . Nevertheless, an example in the case of an
unbounded infinite $J\neq \mathbb{N}$ would be interesting. \bigskip
\end{remark}

\begin{center}
\textbf{References}
\end{center}

\noindent \lbrack 1] \textit{K. Borsuk}, Concerning homotopy properties of
compacta,\textit{\ }Fund. Math. \textbf{62} (1968) 223-254.

\noindent \lbrack 2] \textit{K. Borsuk,} Theory of Shape, Lecture Notes
Series \textbf{28}, Matematisk Inst. Aarhus, 1971.

\noindent \lbrack 3] \textit{K. Borsuk,} Theory of Shape, Monografie
Matematyczne \textbf{59}, Polish Scientific Publishers, Warszawa, 1975.

\noindent \lbrack 4] \textit{K. Borsuk,} Some quantitative properties of
shapes, Fund. Math. \textbf{93} (1976) 197-212.

\noindent \lbrack 5] \textit{F. W. Cathey,} Strong Shape Theory, Thesis,
University of Washington, 1979.

\noindent \lbrack 6] \textit{T. A. Chapman,} On some applications of
infinite-dimensional manifolds in the theory of shape, Fund. Math. \textbf{76%
} (1072), 261-276.

\noindent \lbrack 7] \textit{T. A. Chapman,} Shapes of finite-dimensional
compacta, Fund. Math. \textbf{76} (1072), 181-193.

\noindent \lbrack 8] \textit{J.-M. Cordier and T. Porter,} Shape Theory:
Categorical Methods of Approximation, Ellis Horwood Ltd., Chichester, 1989.
(Dover edition, 2008.)

\noindent \lbrack 9] \textit{B. \v{C}ervar and N. Ugle\v{s}i\'{c}}, Category
descriptions of the $S_{n}$- and $S$.equivalence, Math. Comm. \textbf{13}
(2008), 1-19.

\noindent \lbrack 10] \textit{J. Dydak and J. Segal,} Shape theory: An
introduction, Lecture Notes in Math. \textbf{688}, Springer-Verlag, Berlin,
1978.

\noindent \lbrack 11] \textit{J. Dydak and J. Segal,} Strong shape theory:
Disertationes Math. \textbf{192} (1981).

\noindent \lbrack 12] \textit{D.A. Edwards and H.M. Hastings,} \v{C}ech and
Steenrod homotopy theories with applications to geometric topology, Lecture
Notes in Math. \textbf{542}, Springer-Verlag, Berlin, 1976.

\noindent \lbrack 13] \textit{K. R. Goodearl and T. B. Rushing,} Direct
limit groups and the Keesling-Marde\v{s}i\'{c} shape fibration, Pacific J.
Math.\textbf{86} (1980), 471-476.

\noindent \lbrack 14] \textit{H. Herlich }and\textit{\ G. E. Strecker,}
Category Theory, Allyn and Bacon Inc., Boston, 1973.

\noindent \lbrack 15] \textit{A. Kadlof, N. Kocei\'{c} Bilan and N. Ugle\v{s}%
i\'{c}}, Borsuk's quasi-equivalence is not transitive, Fund. Math. \textbf{%
197} (2007), 215-227.

\noindent \lbrack 16] \textit{J. Keesling,} Products in the shape category
and some applications, Sym. Math. Instituto Nazionale di Alta Matematica 
\textbf{16} (Roma, 1973), Academic Press, New York, 1974, 133-142.

\noindent \lbrack 17] \textit{J. Keesling and S. Marde\v{s}i\'{c}}, A shape
fibration with fibers of different shape, Pacific J. Math. \textbf{84}
(1979) 319-331.

\noindent \lbrack 18] \textit{N. Kocei\'{c} Bilan,} On some coarse shape
invariants, Top. Appl. \textbf{157} (2010), 2679-2685.

\noindent \lbrack 19] \textit{N. Kocei\'{c} Bilan and N. Ugle\v{s}i\'{c},}
The coarse shape, Glas. Mat. \textbf{42}(\textbf{62}) (2007), 145-187.

\noindent \lbrack 20] \textit{S. Marde\v{s}i\'{c}}, Shapes for topological
spaces, General Topology Appl. \textbf{3} (1973) 265-282.

\noindent \lbrack 21] \textit{S. Marde\v{s}i\'{c}}, Comparing fibres in a
shape fibration, Glasnik Mat. \textbf{13}(\textbf{33}) (1978) 317-333.

\noindent \lbrack 22] \textit{S. Marde\v{s}i\'{c}}, Inverse limits and
resolutions, Shape Theory and Geom. Top. Proc. (Dubrovnik, 1981), Lecture
Notes in Math. \textbf{870}, Springer-Verlag, Berlin, 1981, 239-252.

\noindent \lbrack 23] \textit{S. Marde\v{s}i\'{c} }and\textit{\ J. Segal},%
\textit{\ }Shapes of compacta and ANR--systems, Fund. Math. \textbf{72}
(1971) 41-59.

\noindent \lbrack 24] \textit{S. Marde\v{s}i\'{c} }and\textit{\ J. Segal},
Shape Theory, North Holland, Amsterdam, 1982.

\noindent \lbrack 25] \textit{S. Marde\v{s}i\'{c} and N. Ugle\v{s}i\'{c}}, A
category whose isomorphisms induce an equivalence relation coarser than
shape, Top. Appl. \textbf{153} (2005) 448-463\textit{. }

\noindent \lbrack 26] \textit{K. Morita}, The Hurewicz and the Whitehead
theorems in shape theory, Sci. Reports Tokyo Kyoiku Daigaku, Sec. A, \textbf{%
12} (1974) 246-258.

\noindent \lbrack 27] \textit{K. Morita}, On shapes of topological spaces,
Fund. Math. \textbf{86} (1975) 251-259.

\noindent \lbrack 28] \textit{D. G. Quillen,} Homotopical Algebra, Lecture
Notes in Math. \textbf{43}, Springer-Verlag, Berlin, 1967.

\noindent \lbrack 29] \textit{N. Ugle\v{s}i\'{c},} Stability is a weqk shape
invariant, Glasnik Math. \textbf{44}(\textbf{64}) (2009), 241-254.

\noindent \lbrack 30] \textit{N. Ugle\v{s}i\'{c},} Classifications coarser
than shape, Math. Comm. \textbf{13} (2008), 193-213.

\noindent \lbrack 31] \textit{N. Ugle\v{s}i\'{c},} Continuity in the Coarse
and Weak Shape Categories, Mediterr. J. Math. \textbf{9} (2012), 741-766.

\noindent \lbrack 32] \textit{N. Ugle\v{s}i\'{c},} The shapes in a concrete
category, Glasnik Math. \textbf{51}(\textbf{71}) (2016), 255-306.

\noindent \lbrack 33] \textit{N. Ugle\v{s}i\'{c},} An Example Relating the
Coarse and Weak Shape, Mediterr. J. Math. \textbf{13} (2016), 4939-4947.

\noindent \lbrack 34] \textit{N. Ugle\v{s}i\'{c} and B. \v{C}ervar}, A
subshape spectrum for compacta, Glasnik Mat. \textbf{40 }(\textbf{60})
(2005) 351-388.

\noindent \lbrack 35] \textit{N. Ugle\v{s}i\'{c} and B. \v{C}ervar}, The $%
S_{n}$-equivalence of compacta, Glasnik Mat. \textbf{42}(\textbf{62})
(2007), 196-211.

\noindent \lbrack 36] N. Ugle\v{s}i\'{c} and B. \v{C}ervar, \textit{The
concept of a weak shape type}, IJPAM \textbf{39} (2007), 363-428.

\end{document}